\newtheorem{theorem}{Theorem}
\newtheorem{lemma}[theorem]{Lemma}
\newtheorem{corollary}[theorem]{Corollary}
\newtheorem*{claim*}{Claim}
\newtheorem*{remark*}{Remark}
\newtheorem{question}{Question}
\newtheorem{conjecture}{Conjecture}
\newtheorem{claim}{Claim}
\def\AA{{\cal A}}
\def\BB{{\cal B}}
\def\CC{{\cal C}}
\def\DD{{\cal D}}
\def\SS{{\cal S}}
\def\arrminus{{-}}
\def\AAsixB{{\cal N}_6^2}
\newcommand{\interior}[1]{\mathrm{int}(#1)}
\newcommand{\exterior}[1]{\mathrm{ext}(#1)}
\definecolor{myblue}{rgb}{0.19, 0.55, 0.91}
\newcommand{\defi}[1]{\textcolor{myblue}{\emph{#1}}}
\def\inst#1{$^{#1}$}
\title{Flip Graph Connectivity 
for~Arrangements of~Pseudolines~and~Pseudocircles\thanks{%
This work was initiated at a workshop of the collaborative DACH project \emph{Arrangements and  Drawings} in Lutherstadt Wittenberg in April 2022 and parts appeared in the Bachelor's thesis of the first author~\cite{AlvesRadtke2023}. 
We thank all the participants for the  inspiring atmosphere.
S.F.\ was supported by DFG Grant FE~340/13-1. 
J.O.\ was supported by ERC StG~757609.
S.R.\ was supported by DFG-Research Training Group 'Facets of Complexity' (DFG-GRK~2434).
M.S.\ was supported by DFG Grant SCHE~2214/1-1.
}}
\author{
Yan Alves Radtke\inst{1}
\and 
Stefan Felsner\inst{1}
\and 
Johannes Obenaus\inst{2}
\and 
Sandro Roch\inst{1} 
\and
Manfred Scheucher\inst{1}
\and
Birgit Vogtenhuber\inst{3}
}
\date{} 
\begin{document}
\thispagestyle{empty}

\maketitle

\begin{center}
\vskip-12mm
{\footnotesize
\inst{1} 
Institut f\"ur Mathematik, 
Technische Universit\"at Berlin, Germany\\
\texttt{\{felsner,roch,scheucher\}@math.tu-berlin.de}
\medskip

\inst{2} 
Department of Computer Science, 
Freie Universit\"at Berlin, Germany,\\
\texttt{\{johannes.obenaus\}@fu-berlin.de}
\medskip

\inst{3} 
Institute of Software Technology, Graz University of Technology, Graz, Austria\\
\texttt{\{bvogt\}@ist.tugraz.at}
}
\end{center}

\vfill

\begin{abstract}
\centerline{\textbf{Abstract}}
\medskip

\noindent
Flip graphs of combinatorial and geometric objects are at the heart of many
deep structural insights and connections between different branches of
discrete mathematics and computer science. They also provide
a natural framework for the study of reconfiguration problems.
We study flip graphs of
arrangements of pseudolines and of arrangements of pseudocircles, which are
combinatorial generalizations of lines and circles, respectively.
In both cases we consider triangle flips as local transformation and 
prove conjectures regarding their connectivity.

In the case of $n$ pseudolines we show that the connectivity of the flip graph equals its minimum degree, which is exactly $n-2$. 
For the proof we introduce the class of
\emph{shellable} line 
arrangements, which serve as reference objects for the
construction of disjoint paths. In fact, shellable arrangements are elements of
a flip graph of line arrangements which are vertices of a polytope (Felsner and
Ziegler; DM 241 (2001), 301--312).
This polytope forms a cluster of good connectivity in the flip graph of pseudolines.
In the case of pseudocircles we show that triangle flips induce a connected flip graph on \emph{intersecting} arrangements and also on \emph{cylindrical}
intersecting arrangements. 
The result for cylindrical arrangements is used in the proof for intersecting arrangements. 
We also show that in both settings the diameter of the flip graph is in~$\Theta(n^3)$.
Our constructions make essential use of variants of the sweeping lemma for pseudocircle arrangements
(Snoeyink and Hershberger; Proc.\ SoCG 1989: 354--363). 
We finally study cylindrical arrangements in their own right and provide new combinatorial
characterizations of this class.
\end{abstract}

\vfill

\newpage

\section{Introduction}
\label{sec:introduction}

Reconfiguration is a widely studied topic in discrete mathematics and theoretical computer science~\cite{Nishimura2018survey}.
In many cases, reconfiguration problems can be stated in terms of a \emph{flip graph}. 
For a class of objects, the flip graph has a vertex for each object and adjacencies are determined by a local flip operation, which transforms one object into another.
Typically, the first question is whether a flip graph is connected. 
In the affirmative case, more refined questions
regarding radius, diameter, degree of connectivity, shortest paths, mixing properties, or Hamiltonicity can be of interest. 
Hamiltonicity of flip graphs is related to combinatorial generation and Gray codes, cf.\ Knuth's~TAOCP vol.4A~\cite{Knuth4A} 
and the survey on Gray codes by M\"utze~\cite{Muetze2022survey}.
In nice cases flip graphs turn out to be the graphs of polytopes, 
cf.~recent work on graph associahedra~\cite{MannevillePilaud2015} 
and quotientopes~\cite{PilaudSantos2019}. 
This yields enriching connections between combinatorics and geometry.

A prototypical example is the flip graph of triangulations of a convex polygon.
The vertex set of this graph are all triangulations of the polygon, and two triangulations are adjacent if one can be obtained from the other by exchanging the common edge of two adjacent triangles by the other diagonal of the convex quadrilateral formed by them. 
Similar flip graphs have also been investigated in the context of Delaunay triangulations~\cite{EdelBook01} and for triangulations of 
point sets~\cite{LRSTriangulationsBook2010}.

Two arrangements of curves are related by a \defi{triangle 
flip} if one is obtained from the other by changing the 
orientation of a triangular cell, i.e., moving one of the three
curves of the triangle across the intersection of the two others.
On a family of arrangements we obtain the \defi{triangle flip graph}
by taking the arrangements as vertices and triangle flips as edges.
Triangle flips\footnote{They are sometimes  
also referred to as \emph{mutations}~\cite{BjoenerLVWSZ1993}, \emph{homotopy moves}~\cite{celmsstt-tcslm-18}, or \emph{Reidemeister moves of type~III}~\cite{l-pubrm-15}, where the latter specifies an above/below relationship between the curves/strands.}
on arrangements of curves and the resulting flip graphs
have been studied in various contexts, including the transformation of drawings of graphs~\cite{AichholzerCH0KM23,gioan_proof,schaefer21}, 
curves on surfaces with large genus~\cite{celmsstt-tcslm-18}, and 
knot theory~\cite{l-pubrm-15,t-rmck-83}. 
The work at hand studies triangle flip graphs on arrangements of pseudolines and pseudocircles in the Euclidean plane.

An \defi{arrangement of pseudolines} is a finite collection of simple bi-infinite curves (\defi{pseudolines}) in the plane such that every pair of curves intersects in a unique point 
where the two curves cross. 
In a \defi{marked} arrangement, one of the unbounded cells is distinguished (see also \Cref{sec:prelim}).
An \defi{arrangement of pseudocircles} is a finite collection of simple closed curves (\defi{pseudocircles}) in the plane such that every pair of curves is either disjoint, touches in a single point, or intersects in exactly two crossing points.
An arrangement of pseudocircles	is \defi{intersecting} if every pair of curves in the arrangement has two crossing points.
An arrangement of pseudolines or pseudocircles 
is \defi{simple} if no three of its curves
share a common point. In this work we only consider
simple arrangements.

Arrangements of pseudolines have been introduced by Levi~\cite{Levi1926} as a generalization of arrangements of straight lines. Since then they have been intensively studied and used as they occur in many problems of computational geometry; see the handbook chapters~\cite{AGARWAL200049, FelsnerGoodman2018}.

Ringel~\cite{Ringel1956} studied arrangements of pseudolines using triangle flips. His \emph{homotopy theorem} (name coined in~\cite{BjoenerLVWSZ1993}) shows that the triangle flip graph of arrangements of pseudolines is connected.

While Levi investigated arrangements in the projective plane, Ringel was considering the Euclidean plane as ambient space. 
The number of triangles in arrangements, i.e., degrees in the  triangle flip graph have been studied since Levi's seminal work.
The number of triangles in Euclidean arrangements of $n$ pseudolines ranges from $n-2$ to $\frac{1}{3}n(n-2)$, where examples for both extremes exist for infinitely many values of $n$, for details see~\cite{FeKri99} (lower bound) and \cite{Blanc2011} (upper bound).

Manin and Schechtmann~\cite{MaSch89} introduced higher Bruhat orders, see also 
Ziegler \cite{Zi93}. Elements of higher Bruhat orders are signotopes: 
an \emph{$r$-signotope} is a mapping $\sigma: \binom{[n]}{r} \to \{+,-\}$
fulfilling a monotonicity condition on $(r+1)$-sets.
Felsner and Weil \cite{FelsnerWeil2001} showed that 3-signotopes encode arrangements of pseudolines. A flip between signotopes corresponds to changing the sign of a single $r$-set.
The resulting flip graph of $r$-signotopes is connected for any rank $r$. At the end of their paper Felsner and Weil ventured the following conjecture (the $r=1$ and $r=2$ cases of the statement are known to be true, they concern degree and connectivity of hypercubes and~permutahedra).

\begin{conjecture}[{\cite{FelsnerWeil2001}}]
\label{conjecture:signotopes}
For every $r\ge 3$, the flip graph $\mathbf{G}^r_n$ of $r$-signotopes on $n$ elements 
has minimum degree $n-r+1$ and is $(n-r+1)$-connected.
\end{conjecture}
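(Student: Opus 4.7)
The plan is to establish the \(r=3\) case of this conjecture—arrangements of pseudolines—which, per the abstract, is the content of the paper; the cases \(r\ge 4\) remain open. The minimum-degree half is essentially classical: the flip neighbors of a \(3\)-signotope correspond to triangles in the associated pseudoline arrangement, so the minimum degree of \(\mathbf{G}^3_n\) equals the minimum number of triangles in a simple Euclidean arrangement of \(n\) pseudolines, and this is known to be exactly \(n-2\). This also supplies the trivial upper bound \(\kappa(\mathbf{G}^3_n)\le n-2\) for the connectivity.

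For the matching lower bound \(\kappa(\mathbf{G}^3_n)\ge n-2\), I would invoke Menger's theorem and aim to construct, for any two arrangements \(A\) and \(B\), a system of \(n-2\) internally vertex-disjoint flip paths between them. The natural first move out of \(A\) is to flip one of its at least \(n-2\) triangles; that gives \(n-2\) distinct starting vertices, and the real task is extending them to full paths that do not collide.

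The key tool hinted at in the abstract is a distinguished class of \emph{shellable} arrangements acting as a structural backbone. Because (by Felsner--Ziegler, DM 241 (2001)) realizable shellable arrangements form the vertex set of a polytope, the subgraph of \(\mathbf{G}^3_n\) induced on them inherits the high connectivity of that polytope's \(1\)-skeleton; in particular it already carries \(n-2\) internally disjoint paths between any two shellable arrangements. The scheme is then: from \(A\), route the \(n-2\) different first flips along pairwise disjoint flip sequences to \(n-2\) distinct shellable arrangements; do the same backwards from \(B\); and bridge the two families of endpoints using the polytope's disjoint paths inside the shellable cluster.

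The hardest step will be the routing into the shellable cluster—arguing that the \(n-2\) partial paths leaving \(A\) are pairwise internally disjoint and land at \(n-2\) distinct shellable reference arrangements. A single triangle flip can create or destroy triangles elsewhere, so naive greedy continuations collide. I would attack this by an inductive peeling argument on \(n\), removing a pseudoline incident to the marked unbounded cell and relating the flip graphs of sizes \(n\) and \(n-1\), combined with a notion of shellability strong enough to guarantee that each first flip admits a canonical, locally confined continuation to a distinct shellable representative. Lifting the argument to \(r\ge 4\) would require higher-rank analogs of both shellability and the Felsner--Ziegler polytope, which seems to be the essential reason the full conjecture remains out of reach.
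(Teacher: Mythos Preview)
Your high-level architecture matches the paper's: use the Felsner--Ziegler result to get an $(n-2)$-connected ``cluster'' inside $\mathbf{F}_n$, and then connect every arrangement to that cluster by $n-2$ internally disjoint paths. Two corrections to the setup: first, the polytope vertices are not the shellable arrangements but \emph{all} line arrangements realizable with a fixed slope vector $\Lambda$; shellable arrangements matter because they are realizable with \emph{every} $\Lambda$, hence lie in each such cluster. Second, the paper does not apply Menger between two arbitrary arrangements $A,B$; it uses the simpler lemma that if every vertex has $r$ internally disjoint paths to $r$ distinct vertices of an $r$-connected subgraph $H$, then $G$ is $r$-connected. This spares you from having to coordinate the paths out of $A$ with those into~$B$.

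The substantive gap is in the routing step, which you rightly flag as the hardest part but then propose to handle by an ``inductive peeling argument on $n$''. The paper does something quite different, and your peeling idea is not developed enough to be assessed. The paper's mechanism is a \emph{good set} $T=\{t_1,\dots,t_k\}$ of triangles of $\AA$ (with $k\ge n/3$, later improved): a set with the property that for \emph{every} sign vector $\alpha:T\to\{+,-\}$ there is a shellable arrangement realizing exactly those orientations on $T$. Each path $P_i$ first flips two or three carefully chosen triangles (including $t_i$, or a pair from $T$ compatible with an extra triangle $t_i$) to reach an arrangement $\AA_i$ whose restriction $\alpha_i$ to $T$ is distinct from all $\alpha_j$, $j\neq i$. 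From $\AA_i$ one then walks to the shellable target $\SS_{\alpha_i}$ along a path on which every triple is flipped at most once, so the $T$-orientation vector stays equal to $\alpha_i$ throughout. Disjointness of the paths is then immediate from $\alpha_i\neq\alpha_j$. The counting needed to produce $n-2$ distinct compatible $T_i$'s is what forces the hypothesis $n\ge 43$ in the first pass, sharpened to $n\ge 24$ with additional combinatorics; the range $8\le n\le 23$ is \emph{not} covered, and $n\le 7$ is handled separately via realizability. Your proposal does not anticipate this sign-tracking idea, nor the resulting gap in $n$.
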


The flip graph $\mathbf{G}^r_n$ of $r$-signotopes on $n$ elements has vertices of degree $n-r+1$, e.g.~the all-plus signotope.
Hence, the conjecture has two parts, show that the minimum degree is $n-r+1$ and show that the connectivity equals the minimum degree.  
The mapping from pseudoline arrangements with a marked north-cell
to 3-signotopes is as follows: 
The north-cell defines the upper side of a pseudoline, i.e., an above/below relation of 
pseudolines and points not on the line.
For a triple $\{i,j,k\}$ with $i < j < k$, let $\sigma(ijk) = +$ if the crossing of the lines $\ell_i$ and $\ell_k$ is above $\ell_j$ and $\sigma(ijk) = -$ if it is below. Changing a single sign thus corresponds to a triangle flip in the arrangement. 
The flip graph $\mathbf{G}^3_n$
of $3$-signotopes is the triangle flip graph of arrangements of $n$ pseudolines. 
The degree of a 3-signotope in the flip graph equals the number of triangles in the corresponding arrangement. As already noted, this number is at least $n-2$. We prove the connectivity
part of ~\Cref{conjecture:signotopes} for $r=3$ and
sufficiently large~$n$.

\begin{theorem}
\label{thm:connectivity}
For $n\leq 7$ and $n\geq 24$, the triangle flip graph $\mathbf{F}_n$ on marked arrangements of $n$ pseudolines is $(n-2)$-connected. 
\end{theorem}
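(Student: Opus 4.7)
My plan is to invoke Menger's theorem and construct, for any two arrangements $A, B \in \mathbf{F}_n$, a family of $n-2$ internally vertex-disjoint paths between them. Since every arrangement has at least $n-2$ triangles and hence degree at least $n-2$ in $\mathbf{F}_n$, this matches the minimum-degree bound and is optimal. The very small range $n \le 7$ is handled by exhaustive computer verification (the numbers of simple marked pseudoline arrangements are well within reach there), so the bulk of the work concerns $n \ge 24$.

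The guiding idea is to use the subgraph induced by \emph{shellable} arrangements as a highly connected hub into which every arrangement can be routed through many disjoint paths. Accordingly, I would proceed in three stages. First, I would show that the hub itself is $(n-2)$-connected: by the Felsner--Ziegler result cited in the introduction, shellable arrangements are the vertices of a polytope whose edges correspond to triangle flips, so provided this polytope has dimension at least $n-2$, Balinski's theorem on the graphs of polytopes gives $(n-2)$-connectivity for free. Second, starting from an arbitrary $A \in \mathbf{F}_n$, I would exhibit $n-2$ internally disjoint paths from $A$ into the hub, one beginning with each of the $n-2$ triangle flips available at $A$; a natural recipe is to distinguish a pseudoline $\ell$ and perform $n-2$ different controlled sweeps of $\ell$ across the remaining arrangement, each sweep supplying one path and each first flip being forced by the chosen triangle. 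Third, a standard fan-type argument then links $n-2$ disjoint $A$-to-hub paths with $n-2$ disjoint $B$-to-hub paths through the connectivity of the hub itself to yield $n-2$ internally disjoint $A$-to-$B$ paths in $\mathbf{F}_n$.

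The main obstacle, and in all likelihood the source of the bound $n \ge 24$, is the second stage: one must guarantee that the $n-2$ candidate paths do not collide at any intermediate arrangement or reuse the same triangle flip. Such global disjointness arguments typically become clean only once $n$ is large enough to provide the room needed to re-route locally competing paths around each other; the required threshold naturally lands at a modest constant such as $24$, which is already well beyond the range that can be settled by brute force. This is also what leaves the gap $8 \le n \le 23$, which is simultaneously too large for exhaustive enumeration and below the scale at which the structural construction kicks in.
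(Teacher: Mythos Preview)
Your high-level architecture --- a highly connected ``hub'' plus $n-2$ internally disjoint paths from each arrangement into the hub, glued via a fan/Menger-type argument --- matches the paper's. But two of the three stages differ substantially from what the paper actually does, and your version of the second stage is where the real content lies.

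\textbf{The hub.} The paper's hub is not the set of shellable arrangements. It is $\mathbf{F}_\Lambda$, the flip graph of \emph{all} line arrangements realizable with a fixed slope vector~$\Lambda$; by Felsner--Ziegler this is the skeleton of an $(n-2)$-dimensional zonotope, hence $(n-2)$-connected by Balinski. Shellable arrangements are used because (Lemma~\ref{lemma:shell_lines}) they lie in $\mathbf{F}_\Lambda$ for \emph{every} $\Lambda$, so they serve as entry points into the hub. They are not themselves known to span a polytope.

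\textbf{The disjoint paths.} Your recipe --- pick a line $\ell$, do $n-2$ ``different controlled sweeps of $\ell$'', each starting with a different triangle --- does not supply a mechanism for global disjointness beyond the first step, and $\ell$ need not be incident to $n-2$ distinct triangles. The paper's device is entirely different: it constructs a \emph{good set} $T=\{t_1,\dots,t_k\}$ of $k\ge\Theta(n)$ triangles of $\AA$ with the property that \emph{every} sign pattern $\alpha:T\to\{+,-\}$ is realized by some shellable arrangement. For each $i\le n-2$ one first flips a carefully chosen compatible $2$- or $3$-subset containing $t_i$ to land at an arrangement $\AA_i$ whose orientation vector $\alpha_i$ on $T$ is distinct from all other $\alpha_j$; then one walks (Lemma~\ref{lemma:shell_trans}) to a shellable $\SS_i$ agreeing with $\alpha_i$, flipping each triple at most once. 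Because the $\alpha_i$ differ, the tails are automatically vertex-disjoint --- the signotope restriction to $T$ is an invariant along each tail. The threshold (first $n\ge 43$, then sharpened to $24$) comes from a concrete counting argument: one needs $\frac{1}{2}(k-c_1)(k-c_2)>n-3$ compatible pairs of good triangles, and successive refinements of how $T$ is built push $k$ from $n/3$ up to $\lceil 3n/7\rceil$.

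\textbf{The small cases.} For $n\le 7$ the paper does not use computer search. It observes that every Euclidean arrangement of at most $7$ pseudolines is stretchable, so $\mathbf{F}_n=\mathbf{FL}_n$, and then invokes Proposition~\ref{prop:conn-real} on line arrangements.
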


Since pseudoline arrangements are closely related to several other combinatorial structures such as great-pseudocircle arrangements (oriented matroids of rank 3) and reduced decompositions  of permutations, we expect \Cref{thm:connectivity} to be of interest for multiple fields (see also \Cref{sec:conclusion}).

For the proof of Theorem~\ref{thm:connectivity} we will make use of a structural result by Felsner and Ziegler~\cite{FeZi01}.
They construct a high-dimensional zonotope such that its skeleton is the triangle flip graph of all arrangements of lines which can be realized with the prescribed slopes. 
Moreover, we will define \emph{shellable} arrangements (see \Cref{sec:prelim}) and
show that they can be realized with any set of prescribed slopes. 

An arrangement on $n$ lines can be seen as a real-valued $2 \times n$ matrix, where the $i$-th column encodes the slope and the $y$-intercept of the $i$-th line. 
For a given realizable pseudoline arrangement, the \defi{realization space} is the set of all matrices corresponding to realizing line arrangements. 
Due to  Mn{\"e}v's Universality Theorem \cite{Mnev1988}, realization spaces can behave topologically as bad as arbitrary semialgebraic sets.
In particular, there exist arrangements with disconnected realization space for $n \ge 13$; see \cite{Suvorov1988}.
In other words, a pair of line arrangements, both realizing the same combinatorial type, might not be  transformable into each other via continuous motion of lines without changing the combinatorial type.
Similarly if two line arrangements only differ by a single triangle orientation a continuous transformation from one to the other may have to pass
at a line arrangement of a third combinatorial type.
The following result is in contrast to these considerations. 

\begin{restatable}{proposition}{propConnReal}\label{prop:conn-real}
    The triangle flip graph $\mathbf{FL}_n$ on arrangements of $n$ lines in the plane is 
    $(n-2)$-connected for all $n \ge 3$.  Moreover, for every pair of line arrangements $\mathcal{L}$ and $\mathcal{L}'$, 
    and any $n-3$ pseudoline arrangements $\AA_1,\dots,\AA_{n-3}$ combinatorially different from $\mathcal{L}$ and $\mathcal{L}'$, 
    there exists a continuous motion of lines transforming $\mathcal{L}$ into $\mathcal{L}'$ such that every intermediate line arrangement avoids the combinatorial types of $\AA_1,\dots,\AA_{n-3}$. The statement also holds if  $\mathcal{L}$ and $\mathcal{L}'$ have the same combinatorial type.
\end{restatable}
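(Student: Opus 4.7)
The plan is to combine two ingredients: the Felsner--Ziegler zonotope description of line arrangements with prescribed slopes, and the realizability of shellable arrangements with arbitrary prescribed slopes (developed in \Cref{sec:prelim}). Fix a generic slope vector $S=(s_1<\cdots<s_n)$, parameterize arrangements with those slopes by their $y$-intercepts $c\in\mathbb{R}^n$, and consider the central hyperplane arrangement in $c$-space whose hyperplanes encode triple concurrences. Regions of this arrangement correspond to combinatorial types realizable with slopes $S$, and crossing a hyperplane transversally realizes a single triangle flip. By~\cite{FeZi01}, the region-adjacency graph is the $1$-skeleton of an $(n-2)$-dimensional zonotope $Z_S$, which by Balinski's theorem has $(n-2)$-connected $1$-skeleton (note that $\dim Z_S\le n-2$ because shellable vertices have exactly $n-2$ triangles and hence degree $n-2$). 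Consequently, a flip-path in $Z_S$ avoiding a specified set of at most $n-3$ vertices lifts to a piecewise-linear path in $c$-space, hence to a continuous motion of lines whose intermediate arrangements visit exactly the combinatorial types on the path.

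Given $\mathcal{L}$, $\mathcal{L}'$, and forbidden combinatorial types $\AA_1,\ldots,\AA_{n-3}$, after relabeling so that $\mathcal{L}$ and $\mathcal{L}'$ share the same slope ordering, I would proceed in three continuous stages. First, use the $(n-2)$-connectivity of $Z_{S_\mathcal{L}}$ to find a flip-path from $\mathcal{L}$ to some shellable type $T_s$ in $Z_{S_\mathcal{L}}$ that avoids all $\AA_i$; realize this as a continuous motion in $c$-space with fixed slopes $S_\mathcal{L}$. Second, continuously rotate the lines of $T_s$ from slopes $S_\mathcal{L}$ to slopes $S_{\mathcal{L}'}$ while maintaining the combinatorial type $T_s$. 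Third, use the $(n-2)$-connectivity of $Z_{S_{\mathcal{L}'}}$ to flip from $T_s$ (now with slopes $S_{\mathcal{L}'}$) to $\mathcal{L}'$, again avoiding all $\AA_i$. Concatenating the three motions yields the required continuous transformation from $\mathcal{L}$ to $\mathcal{L}'$; no forbidden type is visited in the middle stage because the type $T_s\neq\AA_i$ is held constant. The $(n-2)$-connectivity of $\mathbf{FL}_n$ is then immediate from the induced flip-path in $\mathbf{FL}_n$, and the case where $\mathcal{L}$ and $\mathcal{L}'$ share a combinatorial type is handled by exactly the same construction.

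The main obstacle I expect is the second stage: preserving the combinatorial type of $T_s$ under a continuous deformation of slopes within a fixed slope ordering, which is a nontrivial statement given that realization spaces of combinatorial types can be disconnected (Suvorov~\cite{Suvorov1988}). The key technical input I would invoke is that the shellable realization of $T_s$ provided by \Cref{sec:prelim} depends continuously on the slope vector, and that for each choice of slopes the admissible $y$-intercepts form a nonempty open region that varies continuously with $S$. This gives path-connectedness of the part of the realization space of $T_s$ with fixed slope ordering: the straight-line slope-rotation from $S_\mathcal{L}$ to $S_{\mathcal{L}'}$, coupled with a continuous adjustment of the $y$-intercepts dictated by the shellable construction, stays inside the realization space of $T_s$ throughout, completing the argument.
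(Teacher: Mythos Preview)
Your proposal is correct and follows essentially the same three-stage architecture as the paper: flip within the fixed-slope zonotope $\mathbf{F}_\Lambda$ to a shellable arrangement, bridge the two slope vectors through a continuous motion that preserves the shellable type, and then flip within $\mathbf{F}_{\Lambda'}$ to the target. The technical details differ only mildly---the paper realizes flips via a 3D plane-interpolation lemma (Lemma~\ref{lemma:flipsarerealizable}) rather than direct $c$-space paths, and establishes the slope-bridging step by an inductive proof that the full realization space of a shellable arrangement is path-connected (Lemma~\ref{lemma:connrealspace}) rather than via continuous dependence of the shelling construction on the slope vector---but your variants are valid and arguably more streamlined.
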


The study of arrangements of pseudocircles as a generalization of arrangements of circles 
was initiated by Grünbaum \cite{Gruenbaum1972} in the 1970's. 
If disjoint pseudocircles are allowed, it is natural 
to consider three different flip operations:
In addition to the triangle flip, there is the \emph{digon-create} flip to allow two initially disjoint pseudocircles to start intersecting in a digon; and the reverse operation, called \emph{digon-collapse} flip (see \Cref{fig:flips} and \Cref{sec:prelim}).
With these three flip operations, the flip-connectivity of arrangements of 
circles is evident: one can shrink the circles until they have pairwise disjoint interiors.

The sweeping lemma of Snoeyink and Hershberger~\cite{SnoeyinkHershberger1991} implies
that any pseudocircle in an arrangement can be shrunk to a point in a continuous process which maintains the intersection requirements \mbox{($|C\cap C'| \leq 2$)} of pseudocircles. In a discretized form, this yields a sequence of digon and triangle flips. Hence, any two 
arrangements of $n$ pseudocircles can be transformed into each other via a sequence of these flips. 

However, 
the resulting flip sequence in general uses many digon flips and goes via the canonical arrangement of pairwise disjoint pseudocircles.
This naturally raises the question of whether it is possible to transform two intersecting arrangements
of pseudocircles into each other without any two pseudocircles becoming disjoint, or, in other words, by using only triangle flips.

\begin{question}
\label{question:pwi}
	Is the triangle flip graph of 
	intersecting arrangements of $n$ pseudocircles connected?
\end{question}

For intersecting arrangements of circles the answer 
is yes. We leave the details as an exercise to the reader. Another class where the answer is yes are
\emph{great-pseudocircle arrangements}, which are a generalization of great-circle arrangements on the sphere. These are intersecting arrangements with the property that for any three pseudocircles $C_i$, $C_j$, $C_k$ the two intersection points of $C_i$ and~$C_j$ are on different sides of $C_k$.
Great-pseudocircle arrangements are in bijection to pseudoline arrangements and hence Ringel's theorem carries over\footnote{For great-pseudocircle arrangements, a flip must consist of two simultaneous triangle flips applied to a pair of `antipodal' triangles, otherwise intermediate arrangements fail to have the great-pseudocircle property.}.
Felsner and Scheucher \cite{FelsnerScheucher2019} showed flip-connectivity for intersecting digon-free arrangements of proper circles and conjectured connectivity of the pseudocircular analogue (cf. {\cite[Conjecture~8.6]{FelsnerScheucher2019}}). 
Our second main result is a positive answer to \Cref{question:pwi}.

\begin{restatable}{theorem}{thmMainIpa}\label{thm:main_ipa}
The triangle flip graph of intersecting arrangements of $n$ pseudocircles is connected. 
\end{restatable}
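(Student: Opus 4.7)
My approach is induction on $n$, with the cylindrical-connectivity result and the variants of the Snoeyink--Hershberger sweeping lemma as the essential tools. The cases $n \le 3$ follow by inspection of the finitely many combinatorial types.

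For the inductive step, the strategy is to show that every intersecting arrangement of $n$ pseudocircles can be reduced, by triangle flips that stay within the intersecting class, to some cylindrical intersecting arrangement. Concretely, I would fix one pseudocircle $C$ in the arrangement $\mathcal{A}$ and use the sweeping-lemma variants to deform $C$ by a sequence of triangle flips until the resulting arrangement satisfies the cylindrical condition (roughly: the remaining pseudocircles can be viewed as closed curves on a cylinder obtained from the plane by puncturing at a suitable point, with $C$ playing the role of the ``axis boundary''). The same reduction applied to $\mathcal{A}'$ produces a second cylindrical arrangement. The cylindrical connectivity theorem then yields a triangle-flip path between these two cylindrical arrangements, and concatenating ``reduction of $\mathcal{A}$'' $\to$ ``cylindrical path'' $\to$ ``reversed reduction of $\mathcal{A}'$'' gives a walk from $\mathcal{A}$ to $\mathcal{A}'$ in the intersecting flip graph.

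The decisive step---and the principal obstacle---is arguing that the reduction to cylindrical form can be realised without exiting the intersecting flip graph, i.e., without ever performing a digon-create or digon-collapse flip. The original Snoeyink--Hershberger lemma is a continuous statement whose naive discretisation may involve both triangle and digon moves; here I would appeal to the paper's sweeping-lemma variant tailored to the intersecting setting, which under the intersecting hypothesis should guarantee that a triangle flip is always available before any digon collapse is forced. A supporting ingredient is the paper's new combinatorial characterisation of cylindrical arrangements, which one can use to make precise the termination condition of the sweep and to recognise when the cylindrical theorem becomes applicable. Lifting the cylindrical flip sequence back to the planar setting is then immediate, since cylindrical triangle flips are, by definition, also triangle flips of the same underlying planar arrangement.

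A secondary technical point to watch is the choice of $C$ used for the sweep: the reduction must terminate, which suggests measuring progress by some monotone quantity (for instance, the number of pseudocircles enclosed by $C$, or a potential attached to the cells of $\mathcal{A}$ that strictly decreases under each triangle flip performed during the sweep). If the straightforward potential fails to decrease, one fallback is to iterate the argument, picking a different reference pseudocircle at each stage, and to combine it with the induction hypothesis on the subarrangement obtained after ``peeling off'' $C$ once it has been isolated.
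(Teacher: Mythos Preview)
Your outline---induction on $n$, reduce an arbitrary intersecting arrangement to a cylindrical one by triangle flips, then invoke \Cref{thm:main_cipa}---matches the paper's. The gap is exactly at the step you flag as the ``principal obstacle''. There is no sweeping-lemma variant in the paper (or elsewhere) guaranteeing that in an intersecting arrangement a triangle flip expanding $C$ is always available; on the contrary, the situation where the \emph{only} expanding move for $C$ is a digon-collapse with some other pseudocircle $C'$ genuinely occurs and must be handled. Your hope that the intersecting hypothesis rules this out is simply false, and your fallback (``peel off $C$ once it has been isolated'') does not fit either: $C$ is the circle you are trying to expand, not one you can remove.

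What the paper actually does when $C$ is blocked by a digon with $C'$ is: (i) observe that every other pseudocircle crosses the lens $\interior{C}\cap\exterior{C'}$ transversally, so \Cref{lemma:zipping} lets one sweep $C'$ outward by triangle flips until $C$ and $C'$ are \emph{parallel} (every crossing of $\AA-\{C,C'\}$ lies on the same side of both); (ii) apply the induction hypothesis to $\AA-\{C'\}$ to get a triangle-flip path making that smaller arrangement cylindrical; (iii) \emph{mimic} each flip of this path in $\AA$, replacing a single flip across $C$ by two flips (one for $C$, one for $C'$) and shuttling the digon along $C$ by two auxiliary flips when it sits on the wrong edge. The parallelism of $C$ and $C'$ is what makes this mimicking possible. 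None of these three ideas---the transversality argument feeding into the lens-sweep lemma, removing the \emph{obstructing} circle $C'$ rather than $C$ for the induction, and the parallel-pair mimicking---appears in your proposal, and without them the reduction to cylindrical form does not go through.
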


A crucial ingredient
for our proof of \cref{thm:main_ipa} are
\defi{cylindrical arrangements}. These are arrangements of pseudocircles in the plane such that there is a point which belongs
to the bounded interior of each
pseudocircle, such a point is called a \defi{center}.
One part of the proof is to show that 
every intersecting cylindrical arrangement can be flipped into a canonical arrangement by
only using triangle flips and without leaving the class of cylindrical arrangements. 

\begin{restatable}{theorem}{thmMainCipa}\label{thm:main_cipa}
The triangle flip graph of intersecting cylindrical arrangements of $n$ pseudocircles is connected. 
\end{restatable}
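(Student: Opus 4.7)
The plan is to argue by induction on $n$, using a sweeping argument adapted to the intersecting cylindrical setting to reduce every arrangement to a common canonical form. For small values of $n$ (in particular $n \le 2$, where there are no triangles at all), the triangle flip graph is trivially connected. For the inductive step I would fix a canonical intersecting cylindrical arrangement $\mathcal{A}_n^\star$, for instance one in which all $\binom{n}{2}$ pairs of crossings cluster tightly into two small arcs straddling the center $p$, so that the $n$ pseudocircles appear in one prescribed cyclic order along every pseudoradial ray from $p$. The goal is to transform an arbitrary intersecting cylindrical arrangement $\mathcal{A}$ into $\mathcal{A}_n^\star$ by a sequence of triangle flips, keeping all intermediate arrangements intersecting and cylindrical.

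The key technical ingredient is a variant of the Snoeyink--Hershberger sweeping lemma tailored to the class. Given $\mathcal{A}$ and a pseudoradial reference ray $r$ emanating from the center $p$, choose a pseudocircle $C \in \mathcal{A}$ that is outermost with respect to $r$, say the one whose outermost intersection with $r$ is farthest from $p$. I would show that $C$ can be swept inwards by a sequence of triangle flips until its interaction with the remaining pseudocircles is canonical, namely all crossings of $C$ with the other pseudocircles lie in two small arcs close to $p$. The sweep follows the Snoeyink--Hershberger schema but is restricted to the intersecting cylindrical class: one must verify that no digon-create flip is ever forced and that $p$ never leaves the interior of any pseudocircle. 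Once $C$ has been brought into canonical position relative to the rest, the arrangement $\mathcal{A} \setminus \{C\}$ is an intersecting cylindrical arrangement of $n-1$ pseudocircles, and by the induction hypothesis it can be transformed into $\mathcal{A}_{n-1}^\star$ by triangle flips; since $C$ now meets the other pseudocircles only in tight bundles near $p$, these lower-level flips lift cleanly to triangle flips in the full $n$-pseudocircle arrangement. A short auxiliary argument, again via the cylindrical sweep, is needed to exchange the roles of two pseudocircles so that any chosen one can play the role of the swept pseudocircle.

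I expect the main obstacle to be showing that no digon flip is ever forced during the inward sweep of $C$. In the unrestricted Snoeyink--Hershberger setting digons do arise, and ruling them out here relies essentially on the cylindrical hypothesis. The intuition is that creating a digon between $C$ and another pseudocircle $C'$ would locally separate them on one side of $p$, which together with the fact that both enclose $p$ and already cross twice leads to a topological obstruction. Turning this intuition into a rigorous local case analysis of the triangles adjacent to the moving pseudocircle is the technical crux, and will likely make essential use of the new combinatorial characterisations of cylindrical arrangements that the paper announces in its introduction.
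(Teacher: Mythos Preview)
Your overall plan---choose an extremal pseudocircle, sweep it inward with triangle flips only, then recurse on the remaining $n-1$ pseudocircles---is exactly the shape of the paper's argument, and your identification of ``no digon flip is ever forced'' as the crux is correct. But you are missing the one idea that turns this crux from a ``rigorous local case analysis'' into a one-line observation.

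The paper does not work directly in the cylindrical picture. It first invokes the representation of Bultena--Gr\"unbaum--Ruskey and Agarwal et al.\ (\Cref{prop:cylinder}): every intersecting cylindrical arrangement can be drawn as an arrangement of $x$-monotone pseudoparabolas in a rectangle, where the two vertical sides are identified. In this picture there is an honest top-to-bottom order on the curves at the left boundary; call the topmost one $C_1$. Sweeping $C_1$ downward via \Cref{lem:snoeyink_hershberger}, a digon-create is impossible (all pairs already intersect), and a digon-collapse would produce a digon $D$ below $C_1$ with some $C'$; but then the two unbounded ends of $C'$ lie above $C_1$, contradicting that $C_1$ is topmost. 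That is the whole argument. No case analysis, and in particular no appeal to the characterisations of cylindrical arrangements in \Cref{sec:cylindrical}---those are logically independent of the proof of \Cref{thm:main_cipa}.

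Your inductive lifting (``lower-level flips lift cleanly'') is also right in spirit, but as stated it is underspecified: you need to know not just that the crossings of $\AA\setminus\{C\}$ lie outside $C$, but that the edges of any triangle in $\AA\setminus\{C\}$ do not cross~$C$. The paper handles this by maintaining an explicit invariant on the order of crossings along each pseudoparabola, which guarantees that every triangle in the remaining subarrangement lies entirely above the already-processed curves. In the pseudoparabola picture this invariant is easy to state and verify; in your radial picture it would be awkward to formulate. So the missing ingredient is not a new lemma but a change of coordinates.
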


Showing that every intersecting arrangement $\mathcal{A}$ can be flipped into some cylindrical arrangement then completes the proof of \Cref{thm:main_ipa}.
We further study the diameter of the resulting flip graphs. 
For both, the triangle flip graph of intersecting arrangements and
for the triangle flip graph on intersecting cylindrical arrangements, 
we obtain asymptotically tight bounds for the diameter.

\begin{restatable}{proposition}{propDiamCipa}\label{prop:diam_cipa}
The triangle flip graph of intersecting cylindrical arrangements of $n$ pseudocircles has diameter at least $2\binom{n}{3}$ and at most $4\binom{n}{3}$.
\end{restatable}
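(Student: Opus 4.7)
The plan is to establish both bounds via a canonical reference arrangement and a potential function defined on triples.

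\textbf{Upper bound ($4\binom{n}{3}$).}
I would fix a canonical intersecting cylindrical arrangement $\mathcal{C}_n^\star$ (for instance, $n$ slightly perturbed concentric ellipses so that every pair meets in exactly two points, placed in a standard ``sorted'' position around the center) and show that every intersecting cylindrical arrangement $\mathcal{A}$ can be transformed into $\mathcal{C}_n^\star$ using at most $2\binom{n}{3}$ triangle flips while remaining in the class. The triangle inequality together with \Cref{thm:main_cipa} then yields diameter at most $4\binom{n}{3}$. For the distance bound I would induct on $n$: in the inductive step, pick one pseudocircle $C_n$ (say the ``outermost'' one with respect to the center $p$), and move it into its canonical position relative to the remaining $n{-}1$ pseudocircles by at most $2\binom{n-1}{2}$ triangle flips. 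The factor $2$ reflects that each pair $\{C_i,C_j\}$ contributes at most two crossings that $C_n$ must be swept past, and the sweeping lemma (cylindrical variant) guarantees we can do this by local triangle flips without leaving the class. Recursing on the remaining $n{-}1$ pseudocircles gives $D(n)\le D(n-1)+2\binom{n-1}{2}$, which telescopes to $D(n)\le 2\binom{n}{3}$.

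\textbf{Lower bound ($2\binom{n}{3}$).}
I would introduce a potential $\Phi(\mathcal{A})=\sum_{\{i,j,k\}}\phi_{ijk}(\mathcal{A})$, where each triple contribution $\phi_{ijk}\in\{0,1,2\}$ encodes the local combinatorial type of the sub-arrangement $\{C_i,C_j,C_k\}$ (a natural candidate: the number of the two intersection points of $C_i\cap C_j$ that lie inside $C_k$, normalized so it is symmetric in $i,j,k$ and ranges over $\{0,1,2\}$). I would verify two key properties: (i) $\Phi$ is well-defined on intersecting cylindrical arrangements and takes values in $\{0,1,\dots,2\binom{n}{3}\}$; and (ii) a single triangle flip on $\{C_i,C_j,C_k\}$ changes only $\phi_{ijk}$ and changes it by exactly $\pm 1$. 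Exhibiting one arrangement of potential $0$ (say $\mathcal{C}_n^\star$) and one of potential $2\binom{n}{3}$ (for instance the ``mirrored'' or maximally twisted cylindrical arrangement) then forces any flip sequence between them to have length at least $2\binom{n}{3}$.

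\textbf{Main obstacle.}
The principal difficulty is twofold and on both sides the cylindrical restriction is what makes the argument delicate. For the upper bound, one must certify that every triangle flip carried out during the reduction to $\mathcal{C}_n^\star$ keeps the arrangement both intersecting (no digon creation) and cylindrical (no pseudocircle loses the shared center in its interior); this requires choosing the sweep direction carefully and invoking a cylindrical variant of the Snoeyink--Hershberger sweeping lemma so that the intermediate states stay in the class. For the lower bound, the crux is proving the ``changes by exactly $1$'' claim: one must classify the local types of a three-pseudocircle cylindrical sub-arrangement, identify which triangles of such a sub-arrangement are flippable, and check that each admissible flip transitions between adjacent values of $\phi_{ijk}$. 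Provided this local analysis succeeds, the counting argument assembles into the stated bound.
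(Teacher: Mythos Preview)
Your plan is essentially the paper's proof. For the upper bound the paper does exactly your induction: using the pseudoparabola representation it sweeps the $k$-th pseudocircle downward in the sub-arrangement of the remaining ones at a cost of at most $2\binom{n-k}{2}$ triangle flips, summing to $2\binom{n}{3}$; the ``stays intersecting and cylindrical'' issue you flag is discharged inside the proof of \Cref{thm:main_cipa}, where it is shown that sweeping the topmost remaining pseudoparabola downward never needs a digon flip. For the lower bound the paper uses precisely your idea with two explicit extremal arrangements $\AA_n^-$ and $\AA_n^+$ in which every triple is NonKrupp$(2)$ respectively NonKrupp$(4)$; since these two $3$-circle types are at flip distance~$2$ and each flip touches exactly one triple, $2\binom{n}{3}$ flips are forced.

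One caution about your concrete candidate for $\phi_{ijk}$: the count ``crossings of $C_i\cap C_j$ inside $C_k$'' is not symmetric in $i,j,k$, and if you symmetrize by summing the three roles then a single triangle flip changes all three summands simultaneously (each vertex of the flipped triangle switches sides with respect to the opposite pseudocircle), so the change can be $\pm 3$ rather than $\pm 1$. The paper sidesteps this by reasoning directly with the flip distance of the induced $3$-arrangement from NonKrupp$(2)$; equivalently, take that distance as $\phi_{ijk}\in\{0,1,2\}$ on the three cylindrical types NonKrupp$(2)$, Krupp, NonKrupp$(4)$, for which only $|\Delta\phi_{ijk}|\le 1$ is required.
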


\begin{restatable}{proposition}{propDiamIpa}\label{prop:diam_ipa}
The triangle flip graph of intersecting arrangements of $n$ pseudocircles has diameter~$\Theta(n^3)$. 
\end{restatable}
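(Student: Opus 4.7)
The plan is to prove the upper and lower bounds of $\Theta(n^3)$ separately, in both cases leveraging the cylindrical result \Cref{prop:diam_cipa}.

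\textbf{Upper bound.} The approach mirrors the two-stage strategy used in the proof of \Cref{thm:main_ipa}: first, flip an arbitrary intersecting arrangement $\mathcal{A}$ into some intersecting cylindrical arrangement $\mathcal{A}'$, and then use \Cref{prop:diam_cipa} to reach a canonical cylindrical arrangement from $\mathcal{A}'$. The second stage costs at most $4\binom{n}{3}=O(n^3)$ flips by \Cref{prop:diam_cipa}. For the first stage, the plan is to refine the sweeping-lemma based argument underlying \Cref{thm:main_ipa} into a quantitative statement: a single sweep of a pseudocircle towards a common center, realized via the Snoeyink--Hershberger sweeping lemma restricted to triangle flips, produces at most $O(n^2)$ flips, and only $O(n)$ sweeps are needed to enforce a common interior point. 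This gives $O(n^3)$ flips overall; combined with the second stage, the diameter is bounded by $O(n^3)$.

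\textbf{Lower bound.} The idea is to define an integer-valued function $\Phi$ on intersecting arrangements that changes by at most a constant under any single triangle flip and then to exhibit two arrangements on which $\Phi$ differs by $\Omega(n^3)$. For each unordered triple $\{i,j,k\}$, the combinatorial type of the sub-arrangement induced by $C_i,C_j,C_k$ admits a natural $\pm$-classification (for instance, whether a distinguished intersection point lies inside or outside the third pseudocircle); let $\Phi(\mathcal{A})$ count the number of triples with sign $+$. A triangle flip at a triangular face bounded by arcs of exactly three pseudocircles is a local operation that alters only the combinatorial type of that triple, so $\Phi$ changes by at most one per flip. It then suffices to construct two intersecting arrangements whose sign vectors disagree on $\Omega(n^3)$ coordinates; this can be achieved by reusing the extremal pair witnessing the $2\binom{n}{3}$ lower bound in \Cref{prop:diam_cipa}, since intersecting cylindrical arrangements form a subclass of intersecting arrangements.

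\textbf{Main obstacle.} The principal difficulty lies in the quantitative upper bound. The proof of \Cref{thm:main_ipa} establishes only the existence of a flip sequence towards a cylindrical arrangement; turning this into an $O(n^3)$ bound requires choosing the sweep curves so that every intersection point is traversed only $O(n)$ times during a single sweep, thereby keeping each sweep to $O(n^2)$ triangle flips. A secondary subtlety on the lower bound is to verify that the triple-sign invariant is well-defined on every intersecting arrangement and that a triangle flip changes it by exactly one unit; if this fails in boundary cases, one can replace $\Phi$ by a weighted sum or combine several such invariants while retaining the $\Omega(n^3)$ separation.
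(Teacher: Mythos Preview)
Your lower bound is essentially the paper's argument: any function on arrangements that sums, over triples, a quantity depending only on the isomorphism type of the induced $3$-pseudocircle subarrangement changes by $O(1)$ per triangle flip, and the canonical pair $\AA_n^-$, $\AA_n^+$ (all NonKrupp$(2)$ versus all NonKrupp$(4)$) differs on every triple. The paper phrases this slightly differently, observing that the flip distance from NonKrupp$(2)$ to NonKrupp$(4)$ is~$2$, but your invariant idea is equivalent and correct.

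The upper bound, however, has a real gap, and you have misdiagnosed the obstacle. You propose ``$O(n)$ sweeps of $O(n^2)$ triangle flips each,'' with the Snoeyink--Hershberger sweep ``restricted to triangle flips.'' But that restriction is exactly what fails: when expanding a pseudocircle $C$ toward the target point $p$, the sweep may reach a state where the \emph{only} expanding move is a digon-collapse with some other pseudocircle $C'$. At that moment no triangle flip on $C$ is available, so the sweep of $C$ alone simply cannot continue. The proof of \Cref{thm:main_ipa} handles this by making $C'$ parallel to $C$ and then recursing on $\AA-\{C'\}$, mimicking each flip of the smaller arrangement in the full one. Na\"ively, this recursion multiplies the cost by a factor at each level and does not give $O(n^3)$.

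The paper's fix is substantial and is not a matter of ``choosing the sweep curves'' well. It maintains an entire \emph{bundle} $\CC_{[k]}$ of parallel pseudocircles that are expanded simultaneously; it keeps the $2\binom{k}{2}$ internal crossings of the bundle \emph{well-distributed} among the $2(n-k)$ parts cut out by the remaining pseudocircles (at most $k^2/n$ per part), so that a single ``bundle flip'' over an external crossing costs only $O(k)$ rather than $O(k^2)$; and it includes a one-time \emph{reset} of the center $p$ once the bundle exceeds size $n/2$, guaranteeing the bundle stays small thereafter. The accounting over all bundles then yields $O(n^3)$. None of these ideas is present in your proposal, and without them the digon obstruction blocks the argument.
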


\paragraph{Outline.}
In \Cref{sec:prelim}, we introduce concepts and terminology more formally and present some key preliminaries. 
In \Cref{sec:plineflipgraphs} we prove the $(n-2)$-connectivity of the triangle flip graph of arrangements of~$n$ pseudolines.
Additionally we prove the $(n-2)$-connectivity of the triangle flip graph of arrangements of~$n$ lines where in fact the transitions between arrangements can be realized by continuous motions of lines.
\Cref{sec:pcircleflipgraphs} is dedicated to the triangle flip graphs of intersecting (cylindrical) arrangements of pseudocircles.
Our studies also lead us to new characterizations of cylindrical arrangements, which we present in \Cref{sec:cylindrical}.
We conclude the article with a discussion of implications, future research directions, and open problems in \Cref{sec:conclusion}.

\section{Preliminaries}\label{sec:prelim}

A pseudoline partitions the plane into two (unbounded) regions, while a pseudocircle $C$ partitions the plane into a bounded region, the interior $\interior{C}$, and an unbounded region, the exterior~$\exterior{C}$.
An arrangement of curves induces a decomposition of the plane into \defi{vertices} (the intersection points), \defi{edges} (maximal contiguous vertex-free pieces of the curves), and \defi{cells} (connected components of the plane after removing all curves)\footnote{In general there may also be endpoints of curves.}.
One can think of an arrangement as a plane graph-like object consisting of vertices, edges and cells. 

In this work we only consider simple intersecting arrangements.
Hence, every vertex in the arrangements has degree 4 and every cell is incident to at least two pseudocircles respectively
three pseudolines.
A bounded cell with $k$ edges along its boundary is a \defi{$k$-cell}, a 3-cell is a \defi{triangle}, and a 2-cell is a \defi{digon} (a.k.a.~\emph{empty lens}).

\paragraph{Arrangements of pseudolines.} 
An arrangement of $n$ pseudolines in the Euclidean plane has $2n$ unbounded cells. Typically, Euclidean
arrangements are assumed to be \defi{marked}, that is, 
one of the unbounded cells is designated as \defi{north-cell}
$z_N$ and pseudolines are oriented such that the north-cell is in their left halfplane. The \defi{south-cell} $z_S$ of a marked arrangement
is the unique unbounded cell separated from $z_N$ by all pseudolines. 
Unless stated otherwise, all pseudoline arrangements in this paper are marked.
The \defi{canonical labeling} of a marked arrangement
is the numbering of the pseudolines in the order of visit when 
traversing the arrangement from 
the north-cell to the south-cell via unbounded cells, while leaving all vertices of the arrangement to the right.
\Cref{fig:example_marked_arrangement} shows an example.
We consider two arrangements of pseudolines to be \defi{isomorphic} 
if the induced cell decompositions are isomorphic and the marking is preserved\footnote{Isomorphism can also become subtle: There are arrangements of lines with a disconnected realization space, i.e., there are combinatorial isomorphisms with no corresponding homotopy \cite{Mnev1988,Suvorov1988}.}, 
i.e., if there is a bijection between the pseudolines 
 that induces incidence and orientation preserving bijections of vertices, edges, and cells.

\begin{figure}[htb]
	\centering
	\includegraphics[scale=0.75]{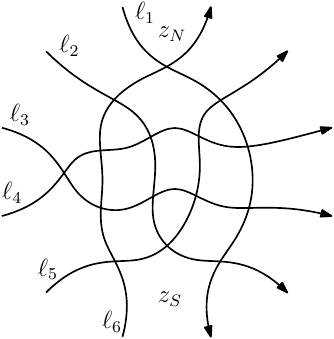}
	\caption{A marked arrangement and its canonical labelling.}
	\label{fig:example_marked_arrangement}
\end{figure}

Given an arrangement $\AA$ of $n$ pseudolines we say that 
the triple $\ell_i, \ell_j, \ell_k$ has \defi{positive orientation} if the crossing 
of the pseudolines $\ell_i$ and $\ell_k$ is in the northern halfplane 
of the pseudoline $\ell_j$, otherwise the triple has \defi{negative orientation}.
Assuming the canonical labeling and 
ordered triples $i < j < k$ we obtain a mapping 
$\sigma_\AA: \binom{[n]}{3} \to \{+,-\}$. This is the 
3-signotope associated with $\AA$. It is known (see~\cite{FelsnerWeil2001}) 
that 3-signotopes encode isomorphism classes of 
arrangements of pseudolines. In the proof of 
\Cref{thm:connectivity} we will use triple orientations to 
argue that certain paths in the flip graph of arrangements 
are vertex-disjoint. 

A pseudoline $\ell$ of an arrangement $\AA$ of pseudolines 
is \defi{extreme} if all crossings lie in the northern halfplane (above~$\ell$) or all crossings lie in the southern halfplane
(below~$\ell$). 
An arrangement $\AA$ of pseudolines 
is \defi{shellable} if it contains an extreme pseudoline $\ell$
and $\AA \arrminus \ell$ is shellable or empty, where $\AA \arrminus \ell$ denotes the arrangement of all pseudolines of $\AA$ except $\ell$.
The sequence obtained by iteratively removing extreme pseudolines from a
shellable arrangement 
is called a \defi{shelling sequence}. 
For example, the arrangement in \Cref{fig:example_marked_arrangement} is shellable with shelling 
sequence $\ell_1, \ell_5, \ell_2, \ell_3, \ell_4, \ell_6$.

\paragraph{Arrangements of pseudocircles.}

We consider two arrangements of pseudocircles to be \defi{isomorphic} 
if the induced cell decompositions are isomorphic, i.e., if there is a bijection between the pseudocircles that induces incidence  preserving bijections of vertices, edges, and cells, while also keeping the ccw-orientations of pseudocircles.
In the plane, there are exactly four arrangements of three pairwise intersecting pseudocircles up to isomorphism (see \Cref{fig:fourTypes}). 
Following~\cite{FelsnerScheucher2019}, we call the 
arrangement with 8 triangles depicted in \Cref{fig:fourTypes}(a) the \defi{Krupp} 
arrangement and denote the other ones as \defi{NonKrupp}.
We refer to the NonKrupp arrangement whose unbounded cell has complexity~$k$ as
NonKrupp$(k)$,
e.g., NonKrupp$(2)$ is the arrangement shown in \Cref{fig:fourTypes}{(c)}.

\begin{figure}[htb]
\centering
\includegraphics{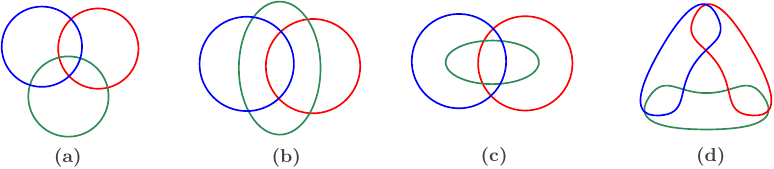}
\caption{The four non-isomorphic arrangements of 3 pairwise intersecting pseudocircles in the plane. 
	(a)~shows the Krupp and (b)--(d) show the three types of NonKrupp arrangements.}
\label{fig:fourTypes}
\end{figure}

Recall that an arrangement of pseudocircles is \emph{cylindrical} if 
there is a point~$p$ which is contained in the interiors of all the pseudocircles. 
Among the arrangements of  \Cref{fig:fourTypes}, the arrangement in \Cref{fig:fourTypes}{(d)}, i.e., the 
NonKrupp$(3)$, is the only non-cylindrical one.

Some related work considers arrangements of pseudocircles on the sphere, e.g.~\cite{FelsnerScheucher2019}.
In this setting there are only two non-isomorphic intersecting arrangements of three 
pseudocircles: The arrangements 
{(b)},  {(c)}, and {(d)} of \Cref{fig:fourTypes} 
are isomorphic on the sphere, since they only differ in the choice of
the unbounded cell. 
In this article we consider arrangements in the plane.
 
A subclass of cylindrical arrangements are the great-pseudocircle arrangements. 
With the above terminology, a great-pseudocircle arrangement is an
arrangement of pseudocircles with the property that every triple 
of pseudocircles from the arrangement induces a Krupp subarrangement.

\paragraph{Sweeps and flips.}
Sweeping is an algorithm design paradigm in  computational geometry. 
The idea is to explore a space with a lower dimensional sweep surface in a way that the continuous exploration can be discretized 
via certain \emph{events} and hence becomes algorithmically handlable. 
For arrangements of curves in the plane, 
the sweeping curve can also be seen as part of the arrangement: 
A \defi{sweep} of an arrangement $\AA$ of pseudolines is 
a transformation of a pseudoline $c$ within
$\AA$ such that one side of $c$ is increasing and collects additional crossings of~$\AA$.
An event of the sweep occurs whenever $c$ passes a crossing, or, equivalently, when a triangle of $\AA$ is flipped.
Snoeyink and Hershberger~\cite{SnoeyinkHershberger1991} 
studied sweeps of arrangements of 
pseudocircles with a pseudocircle.  
They showed that a sweep of an arrangement $\CC$ with a pseudocircle $C$ of $\CC$ is possible in two phases.
In the \emph{growing phase}, $C$ explores the exterior of $C$ in $\CC$ and in the \emph{shrinking phase} it explores the interior of $C$ in $\CC$.
In both phases the events of the sweep can be restricted to one of the following three possibilities, see \Cref{fig:flips}. 
\defi{Digon-create}: 
a pseudocircle gains two intersections with a pseudocircle;
\defi{digon-collapse}:
a pseudocircle loses its two intersections with a pseudocircle;
\defi{triangle~flip}:
a pseudocircle (or pseudoline) moves over the crossing of two others.

\begin{figure}[htb]
    \centering
    \includegraphics[scale=0.9]{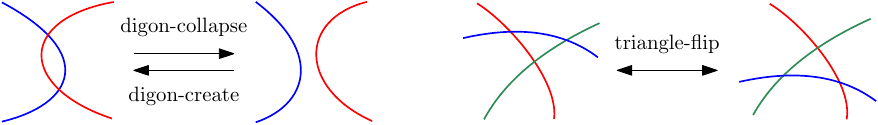}
    \caption{
    An illustration of the three flip operations. 
    }
    \label{fig:flips}
\end{figure}

Snoeyink and Hershberger actually 
prove the following more general sweeping lemma 
for families of simple curves that pairwise intersect at most twice 
and are either bi-infinite or closed. 

\begin{lemma}[{\cite[Lemma 3.2]{SnoeyinkHershberger1991}}]
\label{lem:snoeyink_hershberger}
Let $\AA$ be an arrangement of pseudocircles and bi-infinite curves that pairwise intersect at most twice. Then $\AA$ can be swept starting from any curve $C$ in $\AA$ by 
using the three operations triangle flip, digon-create, and digon-collapse, while maintaining the 
property that any two curves intersect at most twice.
\end{lemma}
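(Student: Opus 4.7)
The plan is to discretize a continuous homotopy of the chosen curve $C$ within $\AA$ into a finite sequence of flips, each preserving the pairwise-at-most-twice invariant. When $C$ is a pseudocircle, the sweep has two phases, a growing and a shrinking phase; when $C$ is bi-infinite, a single phase suffices. I would work through the growing phase in detail and note that the other situations are symmetric.

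For the growing phase, I would define a lexicographic potential whose primary component $\Phi$ counts the vertices of $\AA\setminus C$ lying in $\exterior{C}$, and whose secondary component $\Psi$ captures the total edge count of the cells of the current arrangement incident to $C$ on the exterior side. The core claim is a trichotomy: whenever $\Phi > 0$, at least one of the three sweep operations is available. If some exterior cell incident to $C$ is a triangle, its opposite vertex is a crossing of two non-$C$ curves that can be absorbed into $\interior{C}$ via a triangle flip, strictly decreasing $\Phi$. If some exterior cell incident to $C$ is a digon between $C$ and some $C'$, a digon-collapse removes both intersections of $C$ with $C'$ and strictly decreases $\Psi$. Otherwise, every exterior cell incident to $C$ has complexity at least four, and I would show that some other curve of $\AA$ can be brought into tangency with $C$ by pushing $C$ outward into such a cell, producing a digon-create; the resulting $C$-incident digon is then handled by the previous case. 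Since each move strictly decreases $(\Phi,\Psi)$, the process terminates, at which point all vertices of $\AA\setminus C$ lie in $\interior{C}$.

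The main obstacle is justifying the third branch of the trichotomy. I need to rule out deadlock configurations in which every exterior cell incident to $C$ has complexity $\geq 4$, no digon is incident to $C$, and no non-$C$ curve in $\exterior{C}$ is reachable by a local outward perturbation of $C$. I would analyze a minimum-complexity exterior cell $F$ incident to $C$; the invariant forbids two curves bounding $F$ from crossing inside $F$, so the boundary of $F$ is $C$'s arc together with arcs from several distinct curves entering and leaving $F$. Using the global topology of these curves (each bi-infinite or closed) together with the at-most-twice constraint against $C$, one locates a curve whose $F$-boundary arc can be reached from $C$ by a local outward homotopy; pushing $C$ to first contact with this arc realises the desired digon-create. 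Finally, the shrinking phase proceeds by the symmetric argument with the roles of $\interior{C}$ and $\exterior{C}$ exchanged, and the bi-infinite case is obtained by executing only one such phase.
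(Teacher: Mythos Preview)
The paper does not supply a proof of this lemma. It is quoted from Snoeyink and Hershberger and used as a black box; the only related argument the paper gives is the direct proof of the much more restricted \Cref{lemma:zipping}, where all arcs in the lens are transversal and an acyclicity argument on the induced orientation suffices. So there is no in-paper proof to compare your proposal against.

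Evaluated on its own, your sketch has a genuine gap in both the termination argument and the trichotomy. The lexicographic potential $(\Phi,\Psi)$ is not monotone: a digon-create leaves $\Phi$ unchanged and strictly increases $\Psi$ (already the two-curve example with a single pseudocircle $C'\subset\exterior{C}$ shows $\Psi$ jumping from $2$ to $4$), so a pure descent argument cannot terminate as written. Your remedy, ``the resulting $C$-incident digon is then handled by the previous case'', presumes that $\exterior{C}\cap\interior{C'}$ is a cell of the full arrangement, which fails whenever another curve passes through $\interior{C'}$; and even when it is a cell, you have created and then collapsed a digon without any net decrease in your potential along the way. More seriously, the third branch of your trichotomy tacitly assumes that whenever no exterior triangle or digon is incident to $C$ some curve disjoint from $C$ is available for a digon-create; but if every curve of $\AA$ already meets $C$ twice, no digon-create is possible at all, and you have given no argument that a triangle or exterior digon must then exist. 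Ruling out precisely this deadlock is the heart of the Snoeyink--Hershberger proof, and your final paragraph names the difficulty without resolving it.
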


It will be convenient to have a separate sweeping lemma for \emph{lenses}. 
A \defi{lens} in an arrangement is a bounded region in a subarrangement formed by two intersecting pseudocircles.
An \defi{arc} is a contiguous subset of a pseudocircle, starting and ending at a vertex of the arrangement. 
Let $\AA$ be an arrangement of pseudocircles and 
let $Q$ be (the closure of) a lens bounded by two arcs $L$ and~$R$.

An \defi{arc of $Q$} is an arc of a pseudocircle~$C$ which has both 
endpoints on the boundary of $Q$
and whose relative interior lies in the interior of $Q$.
If an arc $a$ of $Q$ has both endpoints on $L$ or both endpoints on $R$, then $a$ forms a lens with $L$ or~$R$, respectively. 
Otherwise the arc has one endpoint on $L$ and one on $R$, in this case we
call the arc \defi{transversal} (see \Cref{fig:zipping}).

\begin{figure}[htb]
	\centering
	\includegraphics[page=3,scale=0.95]{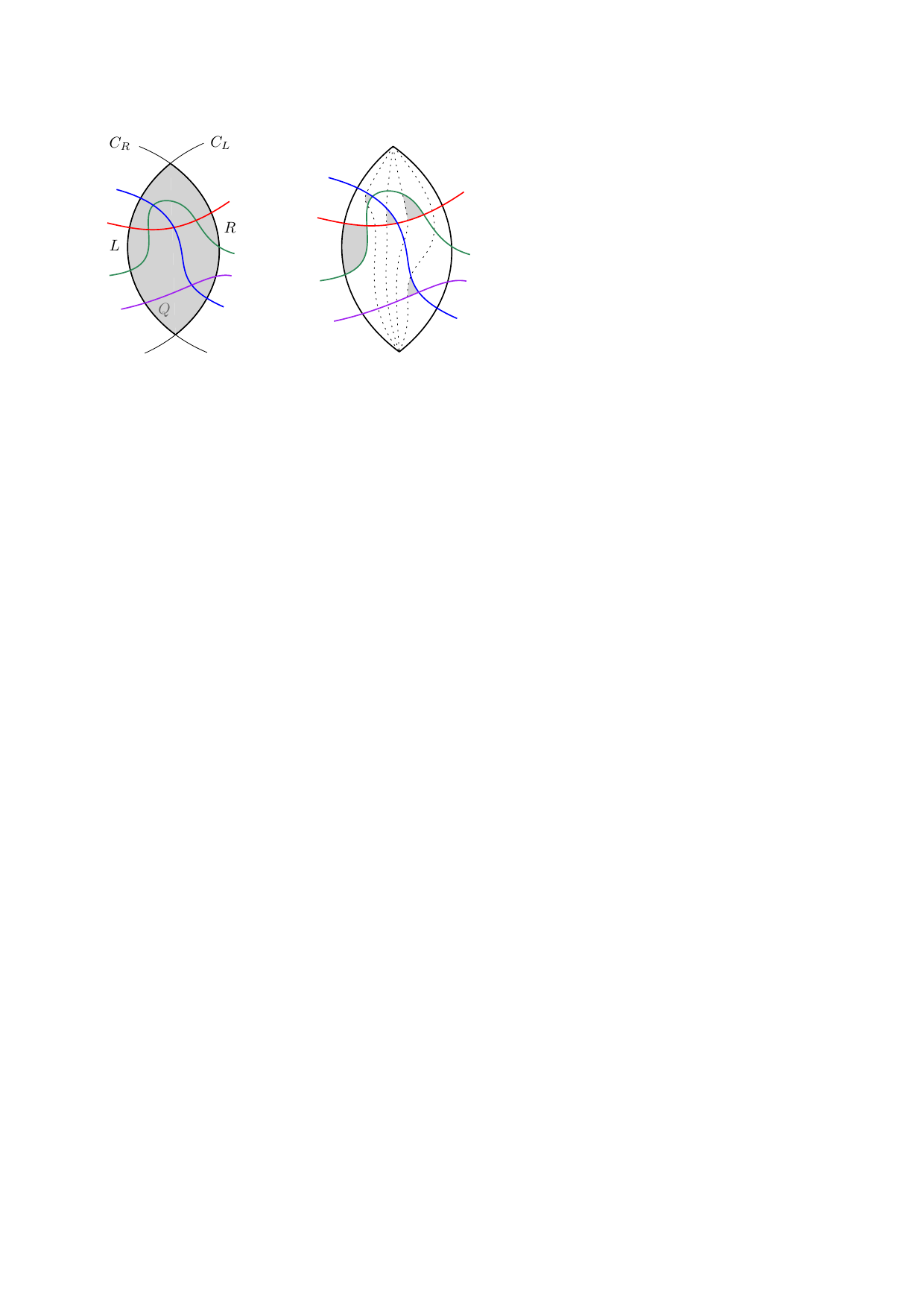}\\
	\caption{Illustration of \Cref{lemma:zipping}: a~lens~$Q$ with four transversal arcs and a sweep of~$Q$. }
	\label{fig:zipping}
\end{figure}

The following lemma is a consequence of~\Cref{lem:snoeyink_hershberger}.
For self-containment, we include a direct~proof.

\begin{lemma}
\label{lemma:zipping}
Let $Q$ be a lens bounded by the arcs $L$ and $R$. If 
all the arcs of $Q$ are transversal, 
then, using only triangle flips, $L$ can be swept towards~$R$ until the interior of $Q$ does not contain a vertex of the arrangement. 
\end{lemma}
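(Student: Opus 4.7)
My plan is to prove the lemma by induction on $N$, the number of vertices of $\AA$ lying in $\interior{Q}$. The base case $N = 0$ is immediate. In the inductive step I aim to identify a triangular cell of $\AA$ contained in $Q$ with one edge on $L$; flipping such a cell pushes $L$ past exactly one interior vertex and produces a smaller instance of the same sweeping problem, with the role of the lens taken over by a slightly smaller lens $Q'$ bounded by the perturbed arc $L'$ and the unchanged arc $R$.

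To find the triangle, I would choose a vertex $v \in \interior{Q}$, necessarily the crossing of two arcs $a$ and $b$, together with the sub-arcs of $a, b$ from $v$ to their $L$-endpoints $p_a, p_b$, so that the closed region $R_v$ enclosed by these sub-arcs and the $L$-piece between $p_a$ and $p_b$ is \emph{inclusion-minimal}. By transversality, any other arc $c$ of $Q$ meeting $\interior{R_v}$ must enter it either through its own $L$-endpoint (if that endpoint lies strictly between $p_a$ and $p_b$) or by crossing $a$ or $b$; following $c$ to its next crossing with $a \cup b$ inside $R_v$ would then yield a strictly smaller candidate region, contradicting minimality. Hence $\interior{R_v}$ is disjoint from $\AA$, and since $R_v$ is bounded by sub-arcs of exactly three pseudocircles $C_L, C_a, C_b$, it is a triangular cell of $\AA$.

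Flipping $R_v$ moves the vertex $v$ to the exterior side of the new arc $L'$, preserves all pairwise pseudocircle-intersection counts (the flip is local to $R_v$, which contains no other curves of $\AA$), and leaves every arc of $Q'$---including the slightly perturbed $a$ and $b$, whose $L$-endpoints shift marginally---transversal with respect to $L'$ and $R$. The induction hypothesis therefore applies to $Q'$, and after finitely many flips $\interior{Q}$ is vertex-free. The step I expect to be the main obstacle is the minimality argument itself: one must take some care, using the transversality of every arc and the finiteness of $\AA$, to verify that an inclusion-minimal $R_v$ really does behave as a cell of the \emph{full} arrangement rather than only of its restriction to $Q$, and to rule out pathological configurations where $a$ and $b$ intersect twice and create nested candidate regions.
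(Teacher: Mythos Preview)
Your overall strategy --- find a triangular cell of $\AA$ inside $Q$ with one side on $L$, flip it, and induct on the number of interior vertices --- is exactly what the paper does, and the verification that the flip leaves a smaller instance of the same problem is fine. The difficulty is, as you anticipated, the existence of such a triangle, and here your inclusion-minimality argument has a genuine gap. Inclusion-minimality of $R_v$ does \emph{not} force $\interior{R_v}$ to be free of other arcs. Concretely, take three transversal arcs $a,b,c$ with $L$-endpoints $p_a,p_b,p_c$ in this order along $L$, such that $b$ first meets $c$ at $v_1$, then $c$ meets $a$ at $v_2$, and finally $a$ meets $b$ at $v_3$ (three straight segments with suitable slopes realise this). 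Then $R_{v_1}$ is the triangle you want, but $R_{v_3}$ is \emph{also} inclusion-minimal: neither $R_{v_1}$ nor $R_{v_2}$ is contained in $R_{v_3}$, because both reach below $b$ while $R_{v_3}$ lies entirely above $b$. Yet $c$ traverses $\interior{R_{v_3}}$ between $v_1$ and $v_2$, so $R_{v_3}$ is not a cell. Your Case~2 reasoning (``following $c$ to its next crossing with $a\cup b$ inside $R_v$ yields a strictly smaller candidate region'') fails here: the candidate regions $R_{v_1}$ and $R_{v_2}$ built from the entry/exit points of $c$ live on the wrong side of $b$ and are not subsets of~$R_{v_3}$.

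The paper resolves this with a global orientation argument rather than a local minimality one: orient every arc of $Q$ from $L$ to $R$, show first that two arcs meeting twice are crossed in the same order (so no $2$-cycles), and then that the resulting directed plane graph $\AA|_Q$ is acyclic. Any source of this DAG is a vertex $v$ whose two incoming edges come straight from $L$ without intermediate crossings; together with the $L$-segment between their endpoints this bounds the desired triangle. A simple patch to your approach that essentially reproduces this idea is to replace ``inclusion-minimal $R_v$'' by ``$v$ is the first crossing along \emph{both} of its arcs''; proving such a $v$ exists, however, amounts to the same acyclicity statement.
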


\begin{proof}
We orient all arcs of $Q$ from $L$ to~$R$,
and consider the arrangement $\AA|_Q$ of these arcs. 
\begin{claim}\label{claim:A}
If two oriented arcs $a$ and $a'$ intersect in two points $p$ 
and $q$ such that $p$ precedes~$q$ on $a$, then $p$ 
precedes $q$ on $a'$.
\end{claim}
\begin{proof}
\renewcommand{\qedsymbol}{$\vartriangleleft$}
Suppose for the contrary that $q$ is the first intersection with $a$ on $a'$.
Consider the region $T$ bounded by the subarcs of $a$ and $a'$ from $L$ to $p$, respectively, and the corresponding part of $L$; see \Cref{fig:newzipping}(left). Then the arc $a'$ enters the interior of $T$ at $q$, but cannot leave it anymore to reach~$R$, a contradiction.
\end{proof}

\begin{claim}\label{claim:B}
The directed graph of the arrangement $\AA|_Q$ is acyclic.
\end{claim}

\begin{proof}
\renewcommand{\qedsymbol}{$\vartriangleleft$}
Suppose $\AA|_Q$ contains a directed cycle. Let $Z$ be a directed cycle such that the enclosed area is minimal. Then $Z$ is the boundary of a cell of the arrangement.
Let $v_1, e_1, v_2, \ldots e_k, v_{k+1}=v_1$ be the sequence of vertices
and edges along $Z$. From \Cref{claim:A} we know that $k\geq 3$ holds. 
Let edge $e_i$ be part of arc $a_i$ and let
$\alpha_i$ be the subarc of $a_i$ starting at $L$ and ending in $v_i$. Similarly, let $\beta_i$ be the subarc of $a_i$ starting in $v_{i+1}$ and ending at $R$.

We will show by induction that for all $j=2,\ldots,k$ there is an intersection
of $\alpha_j$ and $\beta_1$. 
This implies \Cref{claim:B} because the intersection of $\alpha_k$ and $\beta_1$ is after $v_1$ on $a_1$ and before $v_1$ on $a_{k}$,
a contradiction to \Cref{claim:A}.

For $j=2$, the intersection of 
$\alpha_2$ and $\beta_1$ is at $v_2$. For the induction step, 
given that there is an intersection of $\alpha_{j-1}$ 
and $\beta_1$ we consider the region~$T$ in~$Q$ whose boundary consists of the directed subpath 
$S$ from $v_2$ to $v_j$ on the boundary of~$Z$ together with 
$\beta_1$, $\beta_{j-1}$, and some piece of $R$, or in case 
$\beta_1$ and $\beta_{j-1}$ meet in a point~$q$ the boundary consists of $S$, the arc from $v_2$ to $q$ on $\beta_1$
and the arc from $v_j$ to $q$ on $\beta_{j-1}$, 
see~\Cref{fig:newzipping}(right). It is important to
note that due to \Cref{claim:A} the interior of $Z$
is not part of $T$.
The final part of $\alpha_j$ is inside $T$ and ends at~$v_j$.
Since another intersection of $\alpha_j$ and~$\beta_{j-1}$ is 
forbidden by \Cref{claim:A} it follows that $\alpha_j$ had to 
enter region $T$ through~$\beta_1$.
\end{proof}

\begin{figure}[htb]
	\centering   
	\hbox{}
	\hfill
	\includegraphics[page=1,scale=0.95]{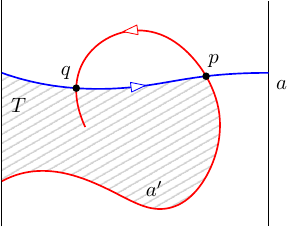}
	\hfill
	\includegraphics[page=2,scale=0.95]{figs/newzipping.pdf}
	\hfill
	\hbox{}
	
	\caption{Illustration of the proof of \Cref{lemma:zipping}.}
	\label{fig:newzipping}
\end{figure}

Since the directed graph  $\AA|_Q$ 
is acyclic, there is at least one source. 
Every source gives rise to a triangle with an edge on~$L$
which can be flipped. A topological ordering of 
of $\AA|_Q$ yields a sequence of triangle flips which
can be used to sweep $Q$ with $L$.
This completes the proof of \Cref{lemma:zipping}.
\end{proof}

Let us briefly remark how \Cref{lemma:zipping} can be deduced from \Cref{lem:snoeyink_hershberger}:
Extend the
left boundary curve $L$ of $Q$  and all the arcs of $Q$ to 
bi-infinite curves. 
For the extension of each of the curves 
use two pseudorays -- one at each end. 
The collection of all pseudorays 
can be chosen such that they are pairwise disjoint (non-crossing) and do not introduce any further crossings.
A sweep with sweep-line $L$ in the resulting arrangement of bi-infinite curves then yields the desired sequence of triangle flips. 

It is also possible to add the extending pseudorays in such a way 
that each pair of the resulting bi-infinite curves has an odd 
number of crossings and infer \Cref{lemma:zipping}
from the odd-crossing sweeping lemma of Bokowski et 
al.~\cite[Lemma~5.2]{BokowskiKPZ18}.

\section{Flip Graphs on Arrangements of Pseudolines}
\label{sec:plineflipgraphs} 

In this section we prove~\Cref{thm:connectivity} and \Cref{prop:conn-real}.
We first show that shellable arrangements have favourable properties, in particular they belong to polytopal clusters in the flip graph $\mathbf{F}_n$. In \Cref{ssec:connect-lines} we use these insights to show $(n-2)$-connectivity of the triangle flip graph $\mathbf{FL}_n$ of arrangements of lines -- even under continuous transformations --, i.e., \Cref{prop:conn-real}.
In \Cref{ssec:connect-pslines} we define good sets of triangles and show how to use them in connection with shellable arrangements to prove~\Cref{thm:connectivity}.

\begin{lemma}
\label{lemma:shell_lines}
Let $\SS$ be a shellable arrangement with shelling sequence $s_{\sigma(1)},\ldots,s_{\sigma(n)}$ and 
let $\lambda_1 < \lambda_2 < \ldots < \lambda_n$ be a sequence of real numbers.
Then $\SS$ can be realized as an arrangement of lines where for each $i$ 
the line $s_i$ has slope~$\lambda_i$.
\end{lemma}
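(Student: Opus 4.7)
The plan is to induct on $n$, adding lines back in the reverse order of the shelling sequence. For $n=1$ any line of slope $\lambda_1$ works.

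For the inductive step, set $\SS':=\SS\arrminus s_{\sigma(1)}$. Since $s_{\sigma(1)}$ is extreme in $\SS$, the arrangement $\SS'$ is again shellable with shelling sequence $s_{\sigma(2)},\ldots,s_{\sigma(n)}$, and the remaining slopes $\{\lambda_i : i\neq\sigma(1)\}$ are still strictly increasing when indexed by label. By the inductive hypothesis, $\SS'$ is realized by a line arrangement $\mathcal{L}'$ in which line $s_i$ has slope $\lambda_i$ for every $i\neq \sigma(1)$. Assume without loss of generality that $s_{\sigma(1)}$ is extreme with all crossings above it; the symmetric case is analogous. Let $P$ denote the finite set of crossings of $\mathcal{L}'$ and introduce a new line $\ell$ of slope $\lambda_{\sigma(1)}$ whose $y$-intercept $b$ is chosen so that every point of $P$ lies strictly above $\ell$; for example any $b<\min_{(x,y)\in P}(y-\lambda_{\sigma(1)}x)$ will do (and if $P=\emptyset$ any $b$ suffices). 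The resulting arrangement $\mathcal{L}:=\mathcal{L}'\cup\{\ell\}$ has $\ell$ as an extreme-below line and realizes the prescribed slopes.

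It remains to identify $\mathcal{L}$ with $\SS$ as marked arrangements, matching $\ell$ with $s_{\sigma(1)}$. The sub-arrangements on the other $n-1$ lines coincide by construction and the added line is extreme below in both, so the only piece of combinatorial information left to check is the west-to-east order in which $\ell$ (respectively $s_{\sigma(1)}$) meets the remaining lines. In $\mathcal{L}$ this order follows from a direct slope computation: the intersection of $\ell$ with the line of slope $\lambda_i$ sits at $x_i=(b_i-b)/(\lambda_{\sigma(1)}-\lambda_i)$, and for $|b|$ large enough the $x_i$ sort into the cyclic sequence $s_{\sigma(1)+1},s_{\sigma(1)+2},\ldots,s_n,s_1,\ldots,s_{\sigma(1)-1}$. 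In $\SS$, the same cyclic order is forced by the definition of canonical labeling: walking the west boundary of the outer face from north to south meets the lines of $\SS'$ in label order, and since $s_{\sigma(1)}$ sits below all of $\SS'$ on both far sides, traversing it from west to east produces precisely this cyclic shift.

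The main obstacle is the combinatorial cyclic-order verification in the previous paragraph. One must argue carefully that the canonical labeling of a marked arrangement forces the crossings along an extreme-below pseudoline $s_k$ to appear in the cyclic order starting at $s_{k+1}$, and at the same time that the prescribed slope $\lambda_{\sigma(1)}$ places $\ell$ into label position $\sigma(1)$ in the canonical labeling of $\mathcal{L}$ (so that $\lambda_{\sigma(1)}$ falls strictly between the far-west positions of $s_{\sigma(1)-1}$ and $s_{\sigma(1)+1}$). Once these compatibilities are in place the induction closes.
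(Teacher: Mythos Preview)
Your approach is identical to the paper's: build the realization by inserting lines in reverse shelling order, each time placing the new line with the prescribed slope far enough above or below all existing crossings. The paper's proof is three sentences and simply takes for granted that the resulting line arrangement is isomorphic to~$\SS$; you are right that the only thing to check is the crossing order along the newly inserted extreme line.

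Your handling of that check has two wrinkles. First, on the line side you fix $b$ and then later require ``$|b|$ large enough'', which is inconsistent; just impose both constraints at once when choosing~$b$. Second, and more importantly, your pseudoline argument asserts that ``$s_{\sigma(1)}$ sits below all of~$\SS'$ on both far sides'', which is false unless $\sigma(1)\in\{1,n\}$: on the west boundary $s_{\sigma(1)}$ lies below $s_1,\dots,s_{\sigma(1)-1}$ but \emph{above} $s_{\sigma(1)+1},\dots,s_n$. The clean fix is to argue directly from extremality. Write $k=\sigma(1)$. Since every crossing of $\SS\arrminus s_k$ lies above~$s_k$, each $s_j$ with $j>k$ (which starts below $s_k$ on the west and ends above it on the east) must cross $s_k$ before it meets any other pseudoline, and each $s_i$ with $i<k$ must cross $s_k$ after meeting every other pseudoline. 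Together with the west-side order $s_{k+1},\dots,s_n$ below~$s_k$ and the east-side order $s_{k-1},\dots,s_1$ below~$s_k$, this forces the west-to-east order along $s_k$ to be exactly $s_{k+1},\dots,s_n,s_1,\dots,s_{k-1}$. The same argument applies verbatim to~$\mathcal{L}$ (and incidentally shows that no separate ``$|b|$ large enough'' condition is needed once $\ell$ is extreme). With this correction your proof and the paper's coincide.
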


\begin{proof}
  Start with a line $s_{\sigma(n)}$ of slope $\lambda_{\sigma(n)}$.
  When  $s_{\sigma(n)},\ldots,s_{\sigma(k+1)}$ have been fixed, then
  insert $s_{\sigma(k)}$ as a line of slope $\lambda_{\sigma(k)}$ such that
  all the crossings of the previously inserted lines are above or below~$s_{\sigma(k)}$.
\end{proof}

Let $\Lambda=(\lambda_1,\ldots,\lambda_n)$ be a slope vector with $\lambda_1 < \ldots < \lambda_n$
and let $\mathbf{A}_\Lambda$ be the set of simple line arrangements with $n$ lines
which can be realized with the slopes 
of $\Lambda$. 
Moreover, let $\mathbf{F}_\Lambda$ be the restriction of the flip graph $\mathbf{F}_n$ on arrangements of
$n$ pseudolines to the elements of $\mathbf{A}_\Lambda$.
The following theorem has been shown by Felsner and Ziegler (see \Cref{fig:zonotope} for an illustration and~\cite{FeZi01} for definitions).

\begin{figure}[htb]
\centering
\includegraphics[trim={3.5cm 6.6cm 3.5cm 6.6cm},clip,width=0.25\textwidth]{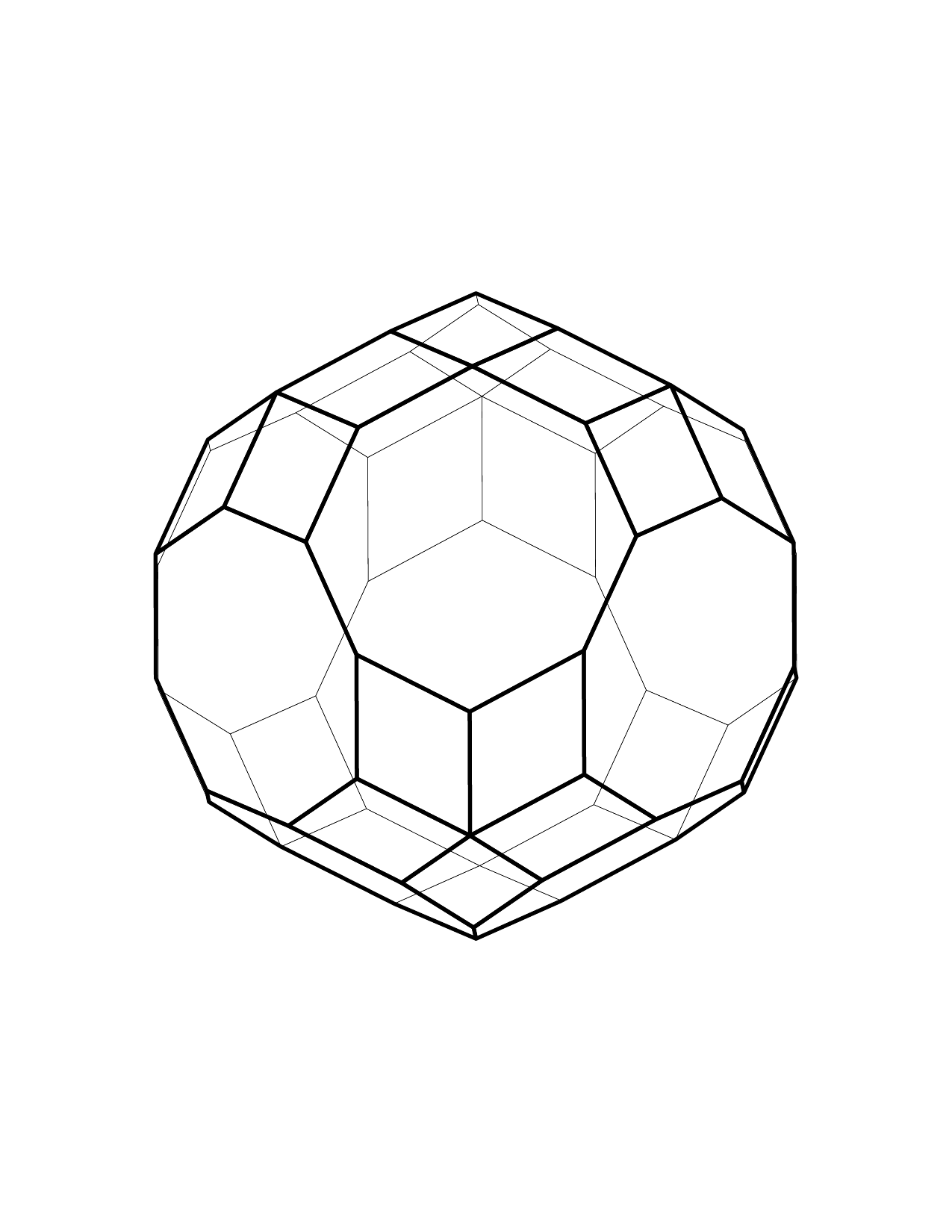}
\caption{The graph $\mathbf{F}_5$, drawn as the skeleton of a $3$ dimensional zonotope. Figure taken from \cite{FeZi01}.}
\label{fig:zonotope}
\end{figure}

\begin{theorem}[{\cite{FeZi01}}]\label{thm:zonotope}
  The graph $\mathbf{F}_\Lambda$ is the skeleton graph of an $(n-2)$-dimensional
  zonotope.
\end{theorem}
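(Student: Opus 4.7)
My plan is to realize $\mathbf{F}_\Lambda$ as the skeleton of the zonotope associated to a central hyperplane arrangement in $\mathbb{R}^{n-2}$, using the standard duality between central arrangements and zonotopes.

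First, I would parameterize $\mathbf{A}_\Lambda$ by the vector of $y$-intercepts: after fixing $\lambda_1<\dots<\lambda_n$, a realization is a point $b=(b_1,\dots,b_n)\in\mathbb{R}^n$, where $\ell_i$ is the line $y=\lambda_i x+b_i$. A short direct computation shows that $\ell_i,\ell_j,\ell_k$ (with $i<j<k$) are concurrent precisely on the hyperplane
\[
 H_{ijk}:\ (\lambda_k-\lambda_j)\,b_i\,+\,(\lambda_i-\lambda_k)\,b_j\,+\,(\lambda_j-\lambda_i)\,b_k\;=\;0.
\]
Set $\mathcal{H}_\Lambda=\{H_{ijk}:1\le i<j<k\le n\}$. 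Because the coefficients sum to zero and satisfy $\sum_i \lambda_i\cdot(\text{coef}_i)=0$, every $H_{ijk}$ contains the two-dimensional subspace $T$ spanned by the global translations $(1,\dots,1)$ and $(-\lambda_1,\dots,-\lambda_n)$, and these two translations visibly do not change the combinatorial type. Hence $\mathcal{H}_\Lambda$ descends to a central hyperplane arrangement $\overline{\mathcal{H}}_\Lambda$ in the $(n-2)$-dimensional quotient $\mathbb{R}^n/T$.

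Next, I would establish that the chambers of $\overline{\mathcal{H}}_\Lambda$ are in bijection with the vertices of $\mathbf{F}_\Lambda$ and that facet-adjacency of chambers corresponds to triangle flips. The sign of the linear form defining $H_{ijk}$ at a point $b$ is exactly the value $\sigma_{\mathcal{L}(b)}(ijk)\in\{+,-\}$ of the $3$-signotope of the line arrangement $\mathcal{L}(b)$, since this form vanishes on $H_{ijk}$ and has a definite sign on each side. Thus a chamber of $\overline{\mathcal{H}}_\Lambda$ determines an isomorphism type in $\mathbf{A}_\Lambda$, and general-position points in the chamber realize that type by \Cref{lemma:shell_lines}-style arguments combined with the fact that avoiding all $H_{ijk}$ is equivalent to simplicity. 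Two chambers share a facet iff their sign vectors differ in exactly one coordinate $\sigma(ijk)$, which is precisely a triangle flip on the triple $\{i,j,k\}$; therefore the facet-adjacency graph of chambers is $\mathbf{F}_\Lambda$.

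Finally, I would invoke the standard polytopal duality: for a central hyperplane arrangement with normals $v_1,\dots,v_m$ in $\mathbb{R}^d$, the Minkowski sum $Z=\sum_s [-v_s,v_s]$ is a $d$-dimensional zonotope whose vertex-edge graph is the facet-adjacency graph of the chambers. Applied to $\overline{\mathcal{H}}_\Lambda$ in $\mathbb{R}^{n-2}$, this produces the desired $(n-2)$-dimensional zonotope whose skeleton is $\mathbf{F}_\Lambda$. The main obstacle I expect is the realizability half of the chamber-to-vertex bijection: one has to rule out the possibility of a chamber whose sign pattern is not a realizable $3$-signotope for the fixed slope vector $\Lambda$. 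This is where shellability comes in conceptually — \Cref{lemma:shell_lines} already supplies enough realizations to see that the chambers containing "shellable" sign patterns are nonempty; the remaining cases follow because the concurrency hyperplanes in $\mathbb{R}^n$ are defined by genuine incidence conditions, so any connected component of the complement automatically consists of realizations of a single combinatorial type.
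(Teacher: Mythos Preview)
The paper does not supply its own proof of this theorem; it is quoted from Felsner and Ziegler~\cite{FeZi01} and used as a black box. Your outline is essentially the construction in that reference: parameterize by intercepts, observe that the concurrency loci are the linear hyperplanes $H_{ijk}$, quotient by the $2$-dimensional subspace of affine motions, and apply the standard central-arrangement/zonotope duality.

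Your final paragraph, however, manufactures a difficulty that is not there and resolves it with the wrong tool. Every point $b$ in the complement of $\bigcup H_{ijk}$ \emph{is}, by construction, a simple line arrangement with slope vector $\Lambda$; chambers are therefore nonempty by definition, and each one is already a combinatorial type in $\mathbf{A}_\Lambda$. There is no separate ``realizability'' obstruction to rule out, and shellability (\Cref{lemma:shell_lines}) plays no role in this argument. What \emph{does} deserve a sentence is the edge correspondence: you need that adjacent chambers differ in exactly one sign, i.e., that each facet of the arrangement lies on a unique $H_{ijk}$. This follows because the normals of distinct $H_{ijk}$ have distinct three-element supports in $\mathbb{R}^n$ and are hence pairwise non-parallel, so no two of the hyperplanes coincide; crossing a facet then changes exactly one triple orientation, which is precisely a triangle flip. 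You should also note that the arrangement is essential in the quotient (equivalently, $\bigcap_{ijk} H_{ijk}=T$), so that the zonotope is genuinely $(n-2)$-dimensional; this is the observation that all lines are concurrent iff $b\in T$.
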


Balinski's theorem \cite{balinki61} asserts that the skeleton graph of a \mbox{$d$-dimensional} polytope is $d$-connected. 
Since zonotopes are special polytopes we have the following:

\begin{corollary}\label{cor:connectivity-Lambda}
  The graph $\mathbf{F}_\Lambda$ is $(n-2)$-connected.
\end{corollary}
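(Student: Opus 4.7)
The plan is essentially to quote two results and combine them, so there is no real obstacle. First I would invoke \Cref{thm:zonotope}, which identifies $\mathbf{F}_\Lambda$ with the $1$-skeleton (the graph of vertices and edges) of some $(n-2)$-dimensional zonotope $Z_\Lambda$. Then I would recall that a zonotope is by definition a convex polytope (being a Minkowski sum of line segments), so Balinski's theorem applies to $Z_\Lambda$.

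Balinski's theorem says that the graph of any $d$-dimensional convex polytope is $d$-connected, meaning one needs to remove at least $d$ vertices to disconnect it (or reduce it to a single vertex). Applying this with $d = n-2$ to $Z_\Lambda$ immediately yields that $\mathbf{F}_\Lambda$ is $(n-2)$-connected, which is the desired statement.

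The only minor point worth flagging explicitly is that zonotopes are indeed full-fledged convex polytopes rather than some weaker object, so that Balinski's hypothesis is genuinely satisfied; this is standard and can be dispatched in one sentence. I would not expect any technical obstacle: the work has already been done in \Cref{thm:zonotope}, and the corollary is just the polytopal consequence of that structural result.
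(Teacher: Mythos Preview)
Your proposal is correct and matches the paper's approach exactly: the paper also simply observes that zonotopes are polytopes and then applies Balinski's theorem to the $(n-2)$-dimensional zonotope of \Cref{thm:zonotope}.
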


The following lemma will be essential in Section~\ref{ssec:connect-pslines}  when we define paths in the flip graph
which connect an arrangement~$\AA$ to an element of $\mathbf{A}_\Lambda$.

\begin{lemma}\label{lemma:shell_trans}
Let $\AA$ be an arrangement of pseudolines 
and let $\SS$ be a shellable arrangement. Then there
is a path $P$ from $\AA$ to $\SS$ in the flip graph $\mathbf{F}_n$
such that on $P$ every triple of pseudolines is flipped at most~once.
\end{lemma}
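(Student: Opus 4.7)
My plan is induction on~$n$. For $n\le 3$ the $3$-signotopes of $\AA$ and $\SS$ differ in at most one coordinate and the claim is trivial. For the inductive step, let $\ell=s_{\sigma(1)}$ be the first pseudoline removed in the shelling sequence of~$\SS$. By shellability, $\ell$ is extreme in~$\SS$, and up to swapping the north--south convention (the opposite case is symmetric) I assume $\ell$ is south-extreme in~$\SS$, i.e.\ all crossings of $\SS\arrminus\ell$ lie above~$\ell$.

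\emph{Phase 1.} I would transform $\AA$ into an arrangement $\AA'$ in which $\ell$ is also south-extreme using only triangle flips incident to~$\ell$, each flipping a distinct triple of the form $\{\ell,i,j\}$. To this end I apply \Cref{lem:snoeyink_hershberger} with~$\ell$ playing the role of the sweep curve inside~$\AA$ and sweep it monotonically southward. Since pseudolines are bi-infinite and meet in exactly one point, no digon events occur and the sweep becomes a sequence of triangle flips, each moving~$\ell$ past exactly one crossing of $\AA\arrminus\ell$. Monotonicity ensures that each such crossing is passed at most once, so every triple $\{\ell,i,j\}$ is flipped at most once. No flip involves a triple disjoint from~$\ell$, so $\AA'\arrminus\ell$ is combinatorially identical to $\AA\arrminus\ell$.

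\emph{Phase 2.} By definition of shellability, $\SS\arrminus\ell$ is again shellable, and both $\AA'\arrminus\ell$ and $\SS\arrminus\ell$ are arrangements of $n-1$ pseudolines. The inductive hypothesis yields a path in $\mathbf{F}_{n-1}$ from $\AA'\arrminus\ell$ to $\SS\arrminus\ell$ along which every triple of $\binom{[n]\setminus\{\ell\}}{3}$ is flipped at most once. I would lift this path to $\mathbf{F}_n$ by performing the same flips in the full arrangement while leaving~$\ell$ fixed. Because~$\ell$ remains south-extreme throughout (none of the Phase~2 flips is incident to~$\ell$), $\ell$ does not cut any bounded cell of the current subarrangement, so each triangle of the subarrangement is still a triangle of the full arrangement and the flip is legal in $\mathbf{F}_n$. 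Concatenating the two phases yields a path $P$ from $\AA$ to $\SS$; triples containing~$\ell$ can only flip in Phase~1 and triples not containing~$\ell$ only in Phase~2, so every triple flips at most once overall.

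The main obstacle is Phase~1: one must show that a monotone one-sided sweep of~$\ell$ inside~$\AA$ suffices to make~$\ell$ south-extreme while revisiting no triple $\{\ell,i,j\}$. This is precisely the pseudoline case of~\Cref{lem:snoeyink_hershberger}, combined with the observation that once a crossing of $\AA\arrminus\ell$ has been moved to the northern halfplane of~$\ell$ during the sweep, it cannot return to the southern halfplane during the remaining southward motion.
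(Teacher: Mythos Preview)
Your proposal is correct and follows essentially the same approach as the paper: sweep the first line of the shelling sequence to its extreme position (Phase~1), then recurse on the remaining $n-1$ lines and lift the recursive path to the full arrangement (Phase~2), with disjointness of the flipped triples following because Phase~1 flips only triples containing~$\ell$ and Phase~2 only triples avoiding~$\ell$. Your write-up is in fact more explicit than the paper's, particularly in justifying that the Phase~2 flips remain valid in the full arrangement because the extreme line~$\ell$ cannot cut any bounded cell of the subarrangement.
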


\begin{proof}
  Let $\ell_1,\ldots,\ell_n$ be the pseudolines of $\AA$ 
  and let
  $s_{\sigma(1)},\ldots,s_{\sigma(n)}$ be a shelling sequence of~$\SS$. 
  Line
  $s_{\sigma(1)}$ is extreme in $\SS$.  If $s_{\sigma(1)}$ is above/below the
  crossings of $\SS$ we can sweep the line~$\ell_{\sigma(1)}$
  upwards/downwards in $\AA$ until all crossings lie below/above the line, see
  Figure~\ref{fig:proof_lemma1}. A flip-path from $\AA\arrminus \ell_{\sigma(1)}$ to
  $\SS\arrminus s_{\sigma(1)}$ can be constructed recursively.  This
  recursively constructed path has no flip of a triple involving
  $\ell_{\sigma(1)}$ while all the triples of the attached final subpath
  involve~$\ell_{\sigma(1)}$. Hence, no triple is flipped twice.
\end{proof}

\begin{figure} [htb]
\centering
\includegraphics[width=0.7\textwidth]{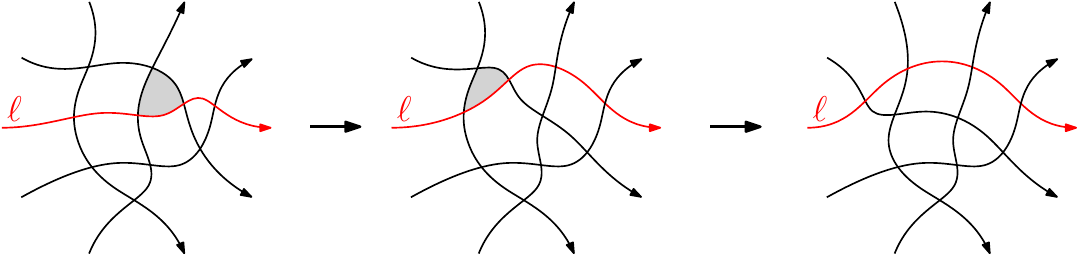}
\caption{The pseudoline $\ell$ is swept upwards with a sequence of triangle flips. In the final position all crossings are below $\ell$.}
\label{fig:proof_lemma1}
\end{figure}

\subsection{Proof of \texorpdfstring{\Cref{prop:conn-real}}{Proposition~\ref{prop:conn-real}}: \texorpdfstring{$(n-2)$}{(n-2)}-connectivity for line arrangements}
\label{ssec:connect-lines}

\begin{restatable}{lemma}{lemmaconnrealspace}
\label{lemma:connrealspace}
 The realization space of a shellable arrangement $\mathcal{S}$ is path-connected.
\end{restatable}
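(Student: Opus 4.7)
I will parameterize the realization space $\Sigma(\mathcal{S})$ by pairs $(\Lambda, b) \in \mathbb{R}^n \times \mathbb{R}^n$ where $\Lambda = (\lambda_1, \ldots, \lambda_n)$ lists the slopes (ordered $\lambda_1 < \cdots < \lambda_n$ according to the canonical labeling) and $b = (b_1, \ldots, b_n)$ lists the $y$-intercepts. For fixed~$\Lambda$, the slice $\Sigma(\mathcal{S}, \Lambda)$ is cut out by one linear inequality in~$b$ per triple $\{i, j, k\}$ (the one fixing the sign $\sigma_\mathcal{S}(i, j, k)$): with $\ell_r: y = \lambda_r x + b_r$, the $y$-coordinate of $\ell_i \cap \ell_k$ minus $\ell_j$'s value at the same $x$ equals $[b_i(\lambda_k - \lambda_j) + b_k(\lambda_j - \lambda_i) + b_j(\lambda_i - \lambda_k)]/(\lambda_k - \lambda_i)$, which is linear in $b$ once $\Lambda$ is fixed. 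Hence $\Sigma(\mathcal{S}, \Lambda)$ is an open convex polyhedral region, thus path-connected, and by \Cref{lemma:shell_lines} it is nonempty for every $\Lambda$ in the convex open slope cone $\{\lambda_1 < \cdots < \lambda_n\}$.

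Next I will promote the construction in the proof of \Cref{lemma:shell_lines} to a continuous section $\beta: \Lambda \mapsto \beta(\Lambda) \in \Sigma(\mathcal{S}, \Lambda)$ over the slope cone. The construction inserts the lines in reverse shelling order; when $s_{\sigma(k)}$ is added, its intercept must lie in an open half-line whose boundary is the extremum of $y - \lambda_{\sigma(k)} x$ over the previously inserted crossings. I set $\beta_{\sigma(k)}(\Lambda)$ equal to this boundary value shifted by $\pm 1$ (the sign determined by whether $s_{\sigma(k)}$ is extreme above or below in the shelling). Previously inserted crossings are rational functions of the preceding slopes and of $\beta_{\sigma(n)}(\Lambda), \ldots, \beta_{\sigma(k+1)}(\Lambda)$, so induction shows $\beta$ is continuous on the entire slope cone.

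Given two realizations $(\Lambda_0, b^{(0)}), (\Lambda_1, b^{(1)}) \in \Sigma(\mathcal{S})$, I build a path in three pieces: (i) linearly interpolate $b^{(0)} \to \beta(\Lambda_0)$ inside the convex slice $\Sigma(\mathcal{S}, \Lambda_0)$; (ii) linearly interpolate $\Lambda(t) = (1-t)\Lambda_0 + t\Lambda_1$ in the convex slope cone, using $\beta(\Lambda(t))$ as the intercepts; (iii) linearly interpolate $\beta(\Lambda_1) \to b^{(1)}$ inside the convex slice $\Sigma(\mathcal{S}, \Lambda_1)$. Each piece stays in $\Sigma(\mathcal{S})$ (by fiber convexity for (i) and (iii), and by the definition of $\beta$ for (ii)), and the three pieces concatenate continuously into a single path from $(\Lambda_0, b^{(0)})$ to $(\Lambda_1, b^{(1)})$.

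The hard part is the continuity of the section $\beta$. The extremum defining the half-line boundary at each insertion is attained by different previous crossings in different regions of the slope cone, so $\beta$ is only piecewise smooth; however, at every point it is the maximum (respectively minimum) of finitely many continuous functions of $\Lambda$, and such a pointwise extremum is automatically continuous, which is all that path-connectedness requires.
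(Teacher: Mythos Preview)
Your proof is correct and takes a genuinely different route from the paper's. The paper argues by induction on the number of lines: pick an extreme line $\ell$, use affine stretching and rotation to align $\ell$ in the two given realizations, then invoke the induction hypothesis on $\mathcal{S}\arrminus\ell$ while pushing $\ell$ far enough away that the inductive motion never crosses it. Your argument instead exhibits $\Sigma(\mathcal{S})$ as a ``fiber bundle'' over the convex slope cone $\{\lambda_1<\cdots<\lambda_n\}$ with open convex (hence path-connected) fibers, and then builds a continuous section $\beta$ by making the choices in the proof of \Cref{lemma:shell_lines} explicit and canonical. Path-connectedness then follows from the standard three-piece concatenation you describe.

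What the two approaches buy: the paper's argument is more hands-on and uses no structure beyond the shelling itself, but it has to work around the nuisance that rotating a realization can push a line through the north pole and change the marked combinatorial type (hence the preliminary $x$-stretching). Your approach sidesteps that issue entirely by never leaving the open slope cone, and it isolates exactly where shellability is used --- namely, only for the \emph{existence} of the section $\beta$ (the convexity of the fibers and of the base holds for every marked arrangement). A small point you left implicit: for the first two insertions the extremum is over the empty set, so you should just set $\beta_{\sigma(n)}$ and $\beta_{\sigma(n-1)}$ to constants; this does not affect continuity.
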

 \begin{proof}
 We proof the lemma by induction over the number of lines in $\mathcal{S}$. The statement is true for the arrangement consisting of a single line, here the realization space is $\mathbb{R}^2$ (the arrangement is marked, hence, the vertical line is not allowed).

 Assume that $\ell$ is an extreme line of $\mathcal{S}$. Let $\mathcal{S}_1$ and $\mathcal{S}_2$ be two realizations of $\mathcal{S}$.
 Rotate $\mathcal{S}_1$ until $\ell$ has the same slope in $\mathcal{S}_1$ and~$\mathcal{S}_2$. 
 It can happen that the rotation changes the combinatorial type of~$\mathcal{S}_1$, by moving a line over the north-pole. To prevent this, we apply an affine transformation, by continuously stretching the plane in $x$-direction. 
 This transformation changes the slopes in~$\mathcal{S}_1$ and~$\mathcal{S}_2$ arbitrarily close to zero. 
 If the slopes are sufficiently close to zero, we can do the aforementioned rotation without changing the combinatorial type of $\mathcal{S}_1$.
 After the rotation, we can translate $\mathcal{S}_1$, such that $\ell$ is equal in $\mathcal{S}_1$ and $\mathcal{S}_2$. 
 
 By induction, there is a continuous motion of lines, transforming $\mathcal{S}_1 \arrminus \ell$ into $\mathcal{S}_2 \arrminus \ell$, without changing combinatorial types.
 It is possible that in this motion, a crossing of $\mathcal{S}_1 \arrminus \ell$ moves over $\ell$, which would change the combinatorial type of $\mathcal{S}$.
 To prevent this, we can move $\ell$ in $\mathcal{S}_1$ and $\mathcal{S}_2$ far enough downwards or upwards.
 \end{proof}
\begin{restatable}{lemma}{lemmafliprealization}
\label{lemma:flipsarerealizable}
 Let $\mathcal{L}$ and $\mathcal{L}'$ be two line arrangements, realized with the same slope vector $\Lambda$, who have the same combinatorial type or are adjacent in $\mathbf{FL}_{n}$. Then $\mathcal{L}$ can be transformed into $\mathcal{L}'$, only by continuous motions of lines, such that all intermediate arrangements are also realized with $\Lambda$ and are combinatorially equivalent to either $\mathcal{L}$ or $\mathcal{L}'$.
\end{restatable}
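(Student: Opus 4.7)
The plan is to parametrize line arrangements with slope vector $\Lambda = (\lambda_1, \ldots, \lambda_n)$, $\lambda_1 < \cdots < \lambda_n$, by their vector of $y$-intercepts $b = (b_1, \ldots, b_n) \in \mathbb{R}^n$, so that line $\ell_i$ has equation $y = \lambda_i x + b_i$. First I would derive the condition for a triple to have positive orientation: for $i < j < k$, a direct calculation from the crossing point of $\ell_i$ and $\ell_k$ gives $\sigma(ijk) = +$ if and only if
\[
L_{ijk}(b) \;:=\; (\lambda_k - \lambda_j)\, b_i + (\lambda_i - \lambda_k)\, b_j + (\lambda_j - \lambda_i)\, b_k \;>\; 0.
\]
Consequently, the realization space with slopes $\Lambda$ of any combinatorial type $\tau$ is the open convex polyhedron
\[
C_\tau \;=\; \{\, b \in \mathbb{R}^n : \sigma_\tau(ijk)\, L_{ijk}(b) > 0 \text{ for all } i < j < k \,\}.
\]
This is the concrete geometry underlying \Cref{thm:zonotope}: the full-dimensional cells of the hyperplane arrangement $\{L_{ijk} = 0\}_{i<j<k}$ correspond exactly to the combinatorial types in $\mathbf{A}_\Lambda$.

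For the first case, where $\mathcal{L}$ and $\mathcal{L}'$ share the combinatorial type $\tau$, both realizations lie in the convex set $C_\tau$, so I would take the straight-line interpolation $b(t) = (1-t)\mathcal{L} + t\mathcal{L}'$; convexity then guarantees that every intermediate arrangement remains in $C_\tau$ and is combinatorially equivalent to $\mathcal{L}$.

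For the second case, where $\mathcal{L}$ and $\mathcal{L}'$ differ by a triangle flip on a triple $\{i^*, j^*, k^*\}$, the key observation is that the two combinatorial types $\tau, \tau'$ have identical sign patterns except at $(i^*, j^*, k^*)$. Dropping only the inequality indexed by this triple yields the set
\[
U \;=\; \{\, b \in \mathbb{R}^n : \sigma_\tau(ijk)\, L_{ijk}(b) > 0 \text{ for all } (i,j,k) \neq (i^*, j^*, k^*) \,\},
\]
which is convex as an intersection of open half-spaces and decomposes as $C_\tau \cup F \cup C_{\tau'}$, where $F$ is the relative interior of the shared facet lying in the hyperplane $\{L_{i^*j^*k^*} = 0\}$. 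Both $\mathcal{L}$ and $\mathcal{L}'$ live in $U$, so the straight segment between them stays in $U$; since $L_{i^*j^*k^*}$ changes sign along the segment, it crosses $\{L_{i^*j^*k^*} = 0\}$ at exactly one point, namely the unique instant at which $\ell_{i^*}, \ell_{j^*}, \ell_{k^*}$ are concurrent. Away from this one instant the intermediate arrangement is a simple line arrangement in $C_\tau$ or $C_{\tau'}$, hence combinatorially equivalent to $\mathcal{L}$ or $\mathcal{L}'$, as required.

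The main point to recognize, more than a technical obstacle, is that fixing the slope vector collapses the realization-space pathologies produced by Mn\"ev-type phenomena into a purely linear picture, so convexity becomes available and the whole lemma reduces to drawing a line segment inside a convex region. The only genuine subtlety is the single instant of concurrence during a flip; I would handle it by pointing out that this is the standard triangle-flip transition event (a measure-zero codimension-one passage between adjacent cells of the zonotopal tiling) rather than an intermediate simple arrangement.
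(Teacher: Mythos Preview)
Your proof is correct and essentially the same as the paper's. The paper embeds $\mathcal{L}$ and $\mathcal{L}'$ at heights $z=0$ and $z=1$ in $\mathbb{R}^3$, spans corresponding lines by planes $H_i$, and slices at height $z=t$ --- which amounts precisely to your linear interpolation of $y$-intercepts $b(t)=(1-t)b+tb'$ --- while its observation that three of the planes meet in at most one point is exactly your statement that each $L_{ijk}$ is linear along the segment and hence changes sign at most once.
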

 \begin{proof}
 Embed the two line arrangements in the $\{z=0\}$ respectively $\{z=1\}$ plane in $\mathbf{R}^3$.  
 Since they are both realized with the same slope set, we can interpolate the $i$-th line in $\mathcal{L}$ and $\mathcal{L}'$ by a plane~$H_i$. 
 Intersecting the plane arrangement $H=\{ H_i \mid  i = 1 \dots n \}$ with the $\{z=t\}$ planes with $t$ increasing from $0$ to $1$ yields a continuous motion of line arrangements with slopes in $\Lambda$, transforming $\mathcal{L}$ into $\mathcal{L}'$.
 
 Since a triple of the planes $\{ H_i \mid  i = 1 \dots n \}$ can cross at most once, all intermediate arrangements have the combinatorial type of $\mathcal{L}$ or $\mathcal{L}'$. 
 \end{proof}

\begin{proof}[Proof of \Cref{prop:conn-real}]
Let $\AA$ and $\AA'$ be two realizable arrangements, realizable with slope vectors $\Lambda$ and~$\Lambda'$, respectively.
We will only consider the subgraph of $\mathbf{FL}_n$ induced by the arrangements that are in $\mathbf{F}_\Lambda$ or  $\mathbf{F}_{\Lambda'}$. After deleting $n-3$ arrangements in this graph, each of $\mathbf{F}_\Lambda$ and $\mathbf{F}_{\Lambda'}$ is still connected, by Corollary~\ref{cor:connectivity-Lambda}. They also share at least one shellable arrangement $\mathcal{S}$,
by Lemma~\ref{lemma:shell_lines}. It follows that there is path from $\AA$ to $\mathcal{S}$ and from $\AA'$ to $\mathcal{S}$. 

We can choose these paths to lie completely in $\mathbf{F}_\Lambda$ and $\mathbf{F}_\Lambda'$, respectively. 
We realize the first path as a continuous motion of lines from $\AA$ to $\mathcal{S}$, both realized with slopes in $\Lambda$, by repeatedly applying Lemma~\ref{lemma:flipsarerealizable}. We can do the same for $\AA'$ and $\mathcal{S}$, both realized by slopes in $\Lambda'$. 

We obtain two realizations $\mathcal{S}_1$ and $\mathcal{S}_2$ of $\mathcal{S}$. They can be transformed into each other with a continuous motion of lines, by Lemma~\ref{lemma:connrealspace}.
\end{proof}

\subsection{Proof of \texorpdfstring{\Cref{thm:connectivity}}{Proposition~\ref{thm:connectivity}}: \texorpdfstring{$(n-2)$}{(n-2)}-connectivity for pseudoline arrangements}
\label{ssec:connect-pslines}

A set $T$ of triples of~$[n]$ 
is \defi{good} if for every assignment
$\alpha:T \to \{+,-\}$ there exists a shellable arrangement
$\SS_\alpha$ such that in $\SS_\alpha$ the orientation of
the triple of pseudolines corresponding to $t \in T$
is $\alpha(t)$.
First, we use shellable arrangements and the concept of good triples to
prove the statement of Theorem~\ref{thm:connectivity} for all $n \geq 43$.
Later, in \Cref{sec:43_to_24}, we perform the step from $n \geq 43$ to $n \geq 24$.

\begin{lemma}
\label{lemma:many_t}
Every arrangement $\AA$ on $n \ge 3$ pseudolines 
admits a sequence of distinct triangles 
$t_1,\ldots,t_k$
 and a sequence of distinct pseudolines $\ell_{\tau(1)},\ldots,\ell_{\tau(k)}$
with $k \ge \frac{n}{3}$
such that the first triple 
in $t_1,\ldots,t_k$
that involves $\ell_{\tau(i)}$ is the triple $t_i$.
\end{lemma}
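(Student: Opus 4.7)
My plan is to prove the lemma via a short greedy construction, supported by the key observation that in any simple Euclidean pseudoline arrangement with $n\geq 3$ pseudolines, every pseudoline $\ell$ lies on the boundary of at least one triangular cell of $\AA$.

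To establish this observation I realize $\ell$ as the $x$-axis; since $n\geq 3$, the number of crossings of $\AA$ off $\ell$ is $\binom{n-1}{2}\geq 1$, so by symmetry I may assume the upper open halfplane of $\ell$ contains at least one crossing. Let $v^\ast=\ell_a\cap\ell_b$ be a crossing above $\ell$ of minimum $y$-coordinate, put $v_a:=\ell\cap\ell_a$ and $v_b:=\ell\cap\ell_b$, and let $T$ be the triangular region with vertices $v_a,v_b,v^\ast$. I claim that $T$ is a cell of $\AA$. Indeed, suppose a pseudoline $\ell_c\notin\{\ell,\ell_a,\ell_b\}$ enters the interior of $T$. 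A crossing of $\ell_c$ with $\ell_a$ in the interior of the edge $[v_a,v^\ast]$ (respectively with $\ell_b$ in $[v_b,v^\ast]$) has $y$-coordinate strictly in $(0,y(v^\ast))$, contradicting the minimality of $v^\ast$. An entry of $\ell_c$ through the base $[v_a,v_b]$ on $\ell$ must be matched by an exit, which is either a second crossing of $\ell_c$ with $\ell$ (forbidden by simplicity) or a crossing with $\ell_a$ or $\ell_b$ of the type just ruled out. Hence no such $\ell_c$ exists and $T$ is a triangular cell of $\AA$ incident to $\ell$. Alternatively, the observation is immediate from sweeping $\ell$ upward in $\AA$: the first sweep event must be a triangle flip.

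Given the observation, I finish by a greedy sweep over the pseudolines. I maintain a ``covered'' set $S\subseteq[n]$, initially empty, and a counter $k=0$. While $S\neq[n]$, I pick any $\ell\in[n]\setminus S$, use the observation to select a triangle $t$ of $\AA$ incident to $\ell$, set $t_{k+1}:=t$ and $\ell_{\tau(k+1)}:=\ell$, add the three pseudolines of $t$ to $S$, and increment $k$. At the moment of choice $\ell_{\tau(k+1)}\notin S$, so $\ell_{\tau(k+1)}$ is absent from every previous $t_j$; this simultaneously yields the first-time property demanded by the statement, distinctness of the chosen pseudolines, and distinctness of the chosen triangles (a repetition $t_i=t_j$ with $j<i$ would force $\ell_{\tau(i)}\in S$, a contradiction). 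Each iteration enlarges $S$ by at most $3$ elements, so after $k$ iterations $|S|\leq 3k$; the loop terminates only when $S=[n]$, giving $n\leq 3k$ and thus $k\geq n/3$.

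The main obstacle is establishing the observation; the greedy argument is routine once the observation is available. The delicate step is to rule out pseudolines that traverse $T$ without creating a crossing strictly below $v^\ast$, and the case analysis above---based on the minimality of $v^\ast$ together with the simple-arrangement property---exhausts all the ways this could happen.
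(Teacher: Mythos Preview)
Your greedy construction is exactly the paper's argument, and your sweeping alternative for the key observation (``the first sweep event must be a triangle flip'') is precisely what the paper invokes. The direct geometric argument, however, has a gap for \emph{pseudo}lines: after straightening $\ell$ to the $x$-axis the remaining curves are still arbitrary, so the arc of $\ell_a$ from $v_a$ to $v^\ast$ need not stay in the strip $0<y<y(v^\ast)$---it can overshoot $y(v^\ast)$ and come back down---and hence a crossing of some $\ell_c$ with $\ell_a$ on that arc may well have $y$-coordinate $\geq y(v^\ast)$, giving no contradiction to the minimality of~$v^\ast$. Since the sweeping argument you also supply is correct and matches the paper, the proof stands; the $y$-coordinate paragraph as written is only valid for straight-line arrangements and should be dropped or reworked combinatorially.
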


\begin{proof}
  The sequence of triangles is constructed one by one.
  Suppose that~$t_1,\ldots,t_{j-1}$ have been constructed. If there is a pseudoline
  $\ell$ which is not incident to any of these triangles.
  The sweeping lemma implies that 
  there exists a triangle $t$ incident to~$\ell$. 
  Since $\ell$ is not incident to the previous triangles $t_1, \ldots, t_{j-1}$, the triangle $t$ is distinct.
  Set $t_j=t$ and $\ell_{\tau(j)} = \ell$.  
  Since each $t_j$ covers
  three pseudolines the construction yields a sequence of length~$k \geq \frac{n}{3}$.
\end{proof}
 
\begin{lemma} \label{lemma:good_t}
Let $t_1,\ldots,t_k$ be
a sequence of distinct triangles and $\ell_{\tau(1)},\ldots,\ell_{\tau(k)}$ be a sequence of distinct pseudolines
such that the first triple in $t_1,\ldots,t_k$ involving $\ell_{\tau(i)}$ is the triple~$t_i$.
Then $T := \{t_1,\ldots,t_k\}$ is a good set of triples.
\end{lemma}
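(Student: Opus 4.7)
\emph{Proof plan.} Given any $\alpha \colon T \to \{+,-\}$, I would construct a shellable arrangement $\SS_\alpha$ realizing the prescribed orientations iteratively, as a chain $\AA_0 \subseteq \AA_1 \subseteq \dots \subseteq \AA_k = \SS_\alpha$, where each $\AA_i$ is obtained from $\AA_{i-1}$ by inserting one extreme pseudoline. Take $\AA_0$ to be any shellable arrangement on the $n-k$ pseudolines indexed by $[n] \setminus \{\tau(1), \ldots, \tau(k)\}$, which exists by \Cref{lemma:shell_lines}. For $i = 1, \ldots, k$, insert $\ell_{\tau(i)}$ into $\AA_{i-1}$ as an extreme pseudoline, choosing between the two possible sides (below all existing crossings or above them) so as to realize $\sigma(t_i) = \alpha(t_i)$ in $\AA_i$. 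Since each insertion adds an extreme pseudoline, shellability is maintained throughout, so $\SS_\alpha$ is shellable.

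Two observations justify the plan. First, by the first-appearance hypothesis, at the moment $\ell_{\tau(i)}$ is inserted the other two pseudolines of $t_i$ are already present in $\AA_{i-1}$: if one of them has the form $\ell_{\tau(j)}$, then $\ell_{\tau(j)}$ appears in $t_i$, forcing $j \leq i$ by the definition of first-appearance and hence $j < i$ by distinctness, so it was inserted at an earlier step; otherwise, it lies in $\AA_0$. Second, the two sides for inserting $\ell_{\tau(i)}$ produce opposite values of $\sigma(t_i)$. Indeed, a monotone sweep of $\ell_{\tau(i)}$ from the ``above'' placement to the ``below'' placement crosses each crossing of $\AA_{i-1}$ exactly once, and each such passage is a triangle flip reversing the sign of the unique triple containing $\ell_{\tau(i)}$ together with the two pseudolines meeting at that crossing. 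Hence the signs of all triples through $\ell_{\tau(i)}$, and in particular $\sigma(t_i)$, are simultaneously flipped between the two endpoint configurations, so exactly one of the two choices realizes $\sigma(t_i) = \alpha(t_i)$.

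It remains to check that the insertion at step $i$ does not disturb the orientations already fixed at earlier steps. Since $\ell_{\tau(i)}$ first appears in $t_i$, it is not a pseudoline of any earlier triple $t_j$, so the three pseudolines of $t_j$ and their pairwise above/below relations are identical in $\AA_{i-1}$ and $\AA_i$; moreover, inserting $\ell_{\tau(i)}$ into the canonical labeling merely adds a new index without reordering the existing ones, so $\sigma(t_j)$ is preserved. Iterating this for $i = 1, \ldots, k$ yields $\sigma(t_i) = \alpha(t_i)$ for every $i$, completing the verification that $T$ is good. The step I expect to be the most delicate is the monotone-sweep claim; this, however, is a direct consequence of the pseudoline instance of \Cref{lem:snoeyink_hershberger} applied to $\ell_{\tau(i)}$ within $\AA_i$, since pseudolines in an arrangement pairwise cross exactly once and hence no digon operations arise during the sweep.
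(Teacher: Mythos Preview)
Your proposal is correct and follows essentially the same approach as the paper: both build $\SS_\alpha$ by first placing the $n-k$ lines outside $\{\ell_{\tau(1)},\ldots,\ell_{\tau(k)}\}$ in an arbitrary shellable configuration, and then inserting $\ell_{\tau(1)},\ldots,\ell_{\tau(k)}$ in this order as extreme lines, choosing above versus below so as to realize $\alpha(t_i)$. Your sweep-based justification that the two extreme placements give opposite orientations of $t_i$, and your explicit check that later insertions do not disturb previously fixed signs, are more detailed than the paper's argument, which treats both points as evident; the reference to \Cref{lemma:shell_lines} for the existence of $\AA_0$ is slightly off (that lemma concerns realizability with prescribed slopes), but the existence of a shellable arrangement on any label set is immediate.
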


\begin{proof} 
  To show that $T$ is good we first extend $\tau$
  to a permutation, i.e., we define appropriate values of $\tau(j)$ for $j=k+1,\ldots,n$.
  Now consider an arbitrary $\alpha:T \to \{+,-\}$. We construct
  a shellable arrangement $\SS_\alpha$ with shelling sequence
  $\ell_{\tau(k)},\ldots,\ell_{\tau(1)},\ell_{\tau(n)},\ldots,\ell_{\tau(k+1)}$.
  The lines $\ell_{\tau(k+1)},\ldots,\ell_{\tau(n)}$ are inserted one by one
  in extreme position -- above or below -- in the initially empty
  arrangement. When it comes to inserting $\ell_{\tau(j)}$ for $j \le k$,
  then due to the construction rule the lines $\ell_{\tau(k)},\ldots,\ell_{\tau(j+1)}$ 
  do not belong to triangle $t_j$. Hence,  $\ell_{\tau(j)}$ is the missing line
  of $t_j$ and depending on~$\alpha(t_j)$ the line~$\ell_{\tau(j)}$ can be inserted
  above or below all crossings. This yields the shellable arrangement~$\SS_\alpha$
  with the required~properties.\relax
\end{proof}

To prove Theorem~\ref{thm:connectivity} we will show that for every arrangement
$\AA$ of $n$ pseudolines there are $n-2$ internally disjoint paths
in $\mathbf{F}_n$ which connect $\AA$ to $n-2$ distinct
shellable arrangements. We also fix an arbitrary set $\Lambda$ of slopes and 
consider the flip graph~$\mathbf{F}_\Lambda$ of line arrangements with slopes
in $\Lambda$.  Shellable arrangements are vertices of this $(n-2)$-connected flip graph
$\mathbf{F}_\Lambda$ (Lemma~\ref{lemma:shell_lines} and Corollary~\ref{cor:connectivity-Lambda}).
The $(n-2)$-connectivity of the full flip graph $\mathbf{F}_n$ is then implied by 
Lemma~\ref{lem:connectivity-lemma}.

For a given arrangement $\AA$ of $n$ pseudolines we will define 
shellable arrangements $\SS_1,\SS_2,\ldots,\SS_{n-2}$ and construct internally disjoint paths from $\AA$ to
the $\SS_i$. The construction is based on a fixed good set of triangles
$T=\{t_1,\ldots,t_k\}$ (Lemma~\ref{lemma:many_t} and~\ref{lemma:good_t}) and a
selection $t_{k+1},\ldots,t_{n-2}$ of additional triangles of $\AA$. 

We say that a set $D$ of triangles of $\AA$ is \defi{compatible} if there exists an
arrangement $\AA[D]$ such that the orientation of a triple~$t$ of pseudolines
in $\AA$ and $\AA[D]$ agrees if and only if $t \not\in D$. 
For each $i\in[n-2]$ we define a path~$P_i$ in $\mathbf{F}_n$. 
The path~$P_i$ depends on a 2-element subset $T_i$ of $T$ with the property that $\{t_i\} \cup T_i$ is compatible.
It starts in 
$\AA$ and moves in two or three steps to the  
arrangement $\AA_i = \AA[\{t_i\} \cup T_i]$.

For each $i\in[n-2]$, 
let $\alpha_i:T \to \{+,-\}$ record the orientation of triangles of $T$ in
$\AA_i$ and let $\SS_{i}$ be a shellable arrangement which agrees with
$\AA_i$ on the orientations of triples in the good set~$T$, where the definition of good implies that such an $\SS_{i}$ exists.
With $Q_i$ we denote the path in~$\mathbf{F}_n$ leading
from~$\AA_i$ to $\SS_{i}$ as described in
Lemma~\ref{lemma:shell_trans}.  Since $\AA_i$ and $\SS_{i}$ agree on
the orientations of $T$ and on~$Q_i$ every triple is flipped at most once, all
the arrangements of the path~$Q_i$ respect~$\alpha_i$. Hence, if
$\alpha_i \neq \alpha_j$ the paths $Q_i$ and~$Q_j$ will be disjoint.  
In the
case $i \leq k$ the set $T_i$ will consist of~$t_i$ and some second triangle
$t_{\rho(i)}$ and the path $P_i$ begins with
$\AA,\AA[t_i],\AA[t_i,t_{\rho(i)}] =\AA_i$ and continues with~$Q_i$.
In the
case $i > k$ the set $T_i$ will consist of two triangles
$t_{\rho_1(i)},t_{\rho_2(i)}$ and the path $P_i$ begins with
$\AA,\AA[t_i],\AA[t_i,t_{\rho_1(i)}],\AA[t_i,t_{\rho_1(i)},t_{\rho_2(i)}] =\AA_i$
and continues with $Q_i$.

\begin{figure}[htb]
\centering
\includegraphics[width=0.9\textwidth]{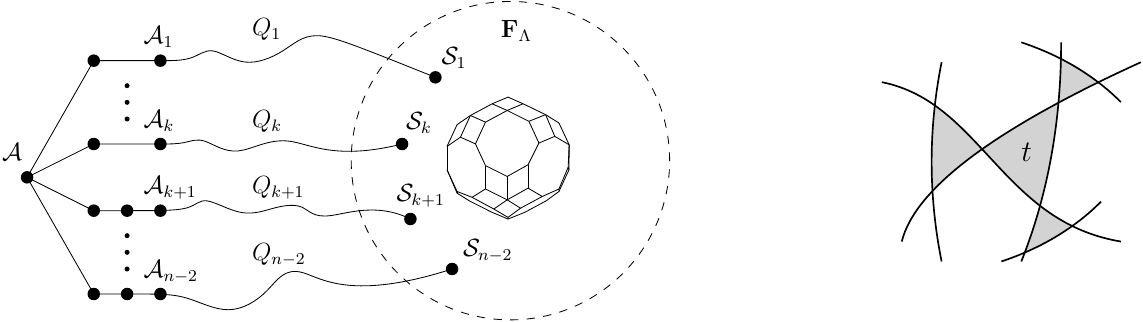}
\caption{\emph{Left:} Illustration for the construction of the paths in the proof of Theorem~\ref{thm:connectivity}. \emph{Right:} Illustration for \Cref{obs:degree}.}
\label{fig:path_plan}
\end{figure}

We will construct the pairs $T_i$ for $i=1,\ldots n-2$ such that $T_i \neq T_j$ for $i\neq j$. 
Together with the compatibility of $T_i=\{t_i,t_{\rho(i)}\}$ for $i \leq k$ and the triples
$\{t_i\}\cup T_i = \{t_i,t_{\rho_1(i)},t_{\rho_2(i)}\}$ for $i>k$ 
this implies that $\alpha_i\neq \alpha_j$, whence the corresponding path family is disjoint except at their common start~$\AA$.

\begin{lemma}
\label{obs:degree}
A triangle $t$ of an arrangement $\AA$ is incompatible with at most three other
triangles of~$\AA$.
\end{lemma}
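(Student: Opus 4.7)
The plan is to first characterize which pairs $\{t,t'\}$ of triangles are incompatible and then bound the number of such partners $t'$ geometrically.

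For the characterization, I would analyze the $3$-signotope of~$\AA$. If $|t \cap t'| \leq 1$, then no $4$-element subset of $[n]$ contains both triples, so flipping the signs of $t$ and $t'$ modifies disjoint collections of $4$-tuples; since each of $t, t'$ is individually flippable (being a triangle of~$\AA$), the combined flip still satisfies the monotonicity axiom, so $\{t,t'\}$ is compatible. If $|t \cap t'| = 2$, then $t \cup t'$ is a $4$-element set $\{i,j,k,l\}$ which is the unique $4$-tuple common to both flips. Enumerating the $8$ valid monotone sign patterns on a $4$-tuple, one checks that in every pattern exactly two of the four triples are individually flippable and that simultaneously flipping both of them always yields a non-monotone pattern. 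Consequently $\{t,t'\}$ is incompatible precisely when $t$ and $t'$ share exactly two pseudolines.

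For the bound, write $t = \{i,j,k\}$ with vertices $v_{ij}, v_{ik}, v_{jk}$. Any triangle $t'$ sharing two lines with $t$ has the form $\{i,j,m\}$, $\{i,k,m\}$, or $\{j,k,m\}$, and hence shares precisely one vertex with $t$. Consider $t' = \{i,j,m\}$, so the shared vertex is~$v_{ij}$. At~$v_{ij}$ the four faces meeting form four quadrants, one of which is occupied by~$t$. I claim $t'$ must lie in the quadrant diametrically opposite to~$t$. The two remaining quadrants are adjacent to~$t$ across a side of~$t$ emanating from~$v_{ij}$, so it suffices to show that the face across any side~$e$ of~$t$ is not a triangle. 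If $e$ lies on line~$k$, then the adjacent face has $e$ as one edge together with two further edges starting at the endpoints of~$e$ along lines~$i$ and~$j$ in the directions pointing away from~$v_{ij}$. These two further edges end at distinct vertices, since lines~$i$ and~$j$ intersect only at~$v_{ij}$, which lies on the opposite side of~$e$. Therefore the adjacent face has at least four vertices and cannot be a triangle.

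It follows that at each of the three vertices of~$t$ at most one other triangle can share two pseudolines with~$t$, namely the triangle of~$\AA$ (if any) occupying the opposite quadrant, and these candidates are pairwise distinct because each is determined by a different pair of shared pseudolines. Hence $t$ is incompatible with at most three other triangles. The main subtlety is the geometric step showing that no face edge-adjacent to~$t$ is a triangle; the remainder is a routine enumeration of the $4$-tuple sign patterns.
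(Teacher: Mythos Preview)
Your proof is correct and follows the same two-step structure as the paper: first characterize incompatibility as sharing a vertex (equivalently, two pseudolines), then bound the number of vertex-sharing triangles by three.

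The main difference is in how the characterization is justified. The paper argues in one sentence, purely geometrically: two triangles are compatible if flipping the first leaves the second intact, which happens exactly when they share no vertex. You instead go through the $3$-signotope, splitting into the cases $|t\cap t'|\le 1$ (disjoint sets of affected $4$-tuples) and $|t\cap t'|=2$ (enumeration of the eight monotone patterns on the unique common $4$-tuple). This is a valid alternative and in fact yields a bit more than needed, since you also establish the converse direction that vertex-sharing triangles are always incompatible; the paper only uses (and only states) the forward implication.

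For the geometric bound, the paper simply asserts, with reference to a figure, that a triangle shares a vertex with at most three other triangles. You supply the argument behind that figure by proving that no face edge-adjacent to $t$ can be a triangle, hence at each vertex only the opposite quadrant can host another triangle. One small presentational point: at the vertex $v_{ij}$ the two adjacent quadrants are across the sides of $t$ lying on $\ell_i$ and $\ell_j$, not on~$\ell_k$; your detailed case treats $e\subset\ell_k$. The argument is symmetric under permuting $i,j,k$, so this is harmless, but saying so explicitly would make the write-up cleaner.
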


\begin{proof}
  Two distinct triangles of an arrangement are compatible if after flipping the first
  of them the second is still a triangle, i.e., if they share no vertex.
  A triangle can share a vertex with at most three other triangles; see \Cref{fig:path_plan} (right).
\end{proof}
The following lemma will allow us to deduce the degree of connectivity from the disjoint paths we will~construct.
\begin{lemma}\label{lem:connectivity-lemma}
    Let $G$ be a graph and let $H$ be an $r$-connected subgraph of $G$. If every vertex~$v$ of $G$ 
    can be connected to $r$ distinct vertices of $H$ with a family of $r$ internally disjoint paths, 
    then $G$ is $r$-connected.
\end{lemma}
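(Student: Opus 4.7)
The plan is to prove the lemma via the cut formulation of $r$-connectivity: $G$ is $r$-connected if and only if $|V(G)| \geq r+1$ and every vertex separator of $G$ has size at least $r$. The size condition is automatic since $H \subseteq G$ and $|V(H)| \geq r+1$ by the $r$-connectivity of $H$, so the entire work lies in ruling out small separators in $G$.

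Suppose for contradiction that some $S \subseteq V(G)$ with $|S| < r$ disconnects $G$, and let $C_1, C_2, \ldots$ denote the components of $G - S$. I would distinguish two cases according to how these components meet $V(H)$.

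In the first case, some component $C$ contains no vertex of $H$. Choose any $v \in C$ and apply the hypothesis to obtain $r$ internally disjoint paths $P_1,\ldots,P_r$ from $v$ to $r$ distinct vertices of $H$. Each $P_j$ has its $H$-endpoint outside $C$, so each path must contain a vertex of $S$: either an internal crossing point, or, if the $H$-endpoint happens to lie in $V(H)\cap S$, that endpoint itself. Internal disjointness forces these $S$-vertices to be pairwise distinct across the $r$ paths, because the only vertex the paths share is $v \in C$, which is not in $S$. This yields $|S| \geq r$, a contradiction. In the second case every component of $G - S$ meets $V(H)$; pick two components $C_1, C_2$ each containing a vertex of $H$. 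Then $S' := S \cap V(H)$ separates $H$, since any $H$-path between $V(H) \cap C_1$ and $V(H) \cap C_2$ would be a $(G - S)$-path linking two different components of $G - S$. Because $|S'| \leq |S| < r$, this contradicts the $r$-connectivity of $H$.

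The only subtle point is the bookkeeping in the first case: one must be careful that each of the $r$ paths really contributes a \emph{distinct} vertex to $S$, even when some $H$-endpoint accidentally lies in $S$. Internal disjointness handles this cleanly, because any $S$-vertex appearing on two distinct paths would necessarily be their common endpoint $v$, which lies in $C$ and hence outside $S$. Both cases thus lead to a contradiction, so no separator of size less than $r$ exists and $G$ is $r$-connected.
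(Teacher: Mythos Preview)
Your proof is correct and follows essentially the same approach as the paper: both verify that no set $S$ of fewer than $r$ vertices separates $G$, using the fan of $r$ internally disjoint paths into $H$ together with the $r$-connectivity of $H$. The paper's version is slightly more direct, avoiding your case split by simply observing that any vertex $v\in G\setminus S$ has at least one of its $r$ paths surviving entirely in $G\setminus S$ (landing at some $w\in H\setminus S$), and then connecting any two such landing points inside $H\setminus S$.
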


\begin{proof} 
    Let $U$ be a set of vertices of $G$ with $|U| < r$. We claim that $G\setminus U$ is connected.
    Consider two vertices $v,v'$ in $G\setminus U$. Since $v$ is connected to $H$ by $r$ 
    disjoint paths and $|U| < r$ there is at least one path $p$ in $G\setminus U$ from $v$ to a vertex $w\in H$.
    Likewise there is a path $p'$ in $G\setminus U$ from $v'$ to a vertex $w'\in H$. Since $H$ is $r$-connected 
    there is a path $q$ from~$w$ to~$w'$ in $H\setminus U$. The concatenation $p,q,p'$ is a walk from $v$ to 
    $v'$ in  $G\setminus U$ and hence, $U$ is not separating.
\end{proof}

We now have all ingredients to prove~\Cref{thm:connectivity} for 
$n \geq 43$. Additional ideas needed to obtain the stated result, i.e., 
the validity in the range $n\leq 8$ and $n\geq 24$,
are deferred to~\Cref{sec:43_to_24}.

\begin{lemma}
\label{lem:plineconnectivity}
For $n\geq 43$, the triangle flip graph $\mathbf{F}_n$ on marked arrangements of $n$ pseudolines 
is $(n-2)$-connected. 
\end{lemma}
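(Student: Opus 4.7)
The plan is to construct, for every arrangement $\AA$ of $n\geq 43$ pseudolines, a family of $n-2$ internally disjoint paths in $\mathbf{F}_n$ from $\AA$ to $n-2$ distinct shellable arrangements, and then invoke \Cref{lem:connectivity-lemma} with anchor subgraph $H=\mathbf{F}_\Lambda$ for any fixed slope vector $\Lambda$. By \Cref{lemma:shell_lines} every shellable arrangement lies in $\mathbf{F}_\Lambda$, and by \Cref{cor:connectivity-Lambda} this graph is $(n-2)$-connected, so \Cref{lem:connectivity-lemma} will deliver $(n-2)$-connectivity of $\mathbf{F}_n$ as soon as the path family is constructed.

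Following the scheme outlined just before the lemma, I first apply \Cref{lemma:many_t} and \Cref{lemma:good_t} to obtain a good set $T=\{t_1,\ldots,t_k\}$ of triangles of $\AA$ with $k\geq n/3\geq 15$, and select additional triangles $t_{k+1},\ldots,t_{n-2}$ of $\AA$ (which exist since every arrangement has at least $n-2$ triangles). The main combinatorial task is to assign to each $i\in[n-2]$ a pairwise distinct $2$-subset $T_i\subseteq T$ such that $\{t_i\}\cup T_i$ is compatible, with the extra requirement that $T_i\ni t_i$ when $i\leq k$. To do this I consider the \emph{compatibility graph} $H$ on $T$ whose edges are pairs of triangles sharing no arrangement vertex. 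By \Cref{obs:degree} the minimum degree of $H$ is at least $k-4\geq 11$, which exceeds $k/2$, so Dirac's theorem supplies a Hamilton cycle in $H$. Taking $T_i=\{t_i,t_{\rho(i)}\}$ where $t_{\rho(i)}$ is the cycle-successor of $t_i$ yields $k$ pairwise distinct compatible pairs for $i\leq k$. For $i>k$, I select $T_i$ greedily from edges of $H$ both of whose endpoints are compatible with $t_i$ and that have not yet been used. A direct count using $|E(H)|\geq \binom{k}{2}-3k/2$, at most $3(k-1)$ edges of $H$ blocked by incompatibility with $t_i$, and at most $n-3$ edges already used shows that at least one valid pair remains at every step once $n\geq 43$ (where $k\geq 15$). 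This counting step is the main obstacle and the reason the argument requires $n\geq 43$; the subsequent work in \Cref{sec:43_to_24} will be needed to push this down.

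With the pairs $T_i$ in hand, the path $P_i$ is the concatenation of a short prefix of two or three triangle flips carrying $\AA$ to $\AA_i := \AA[\{t_i\}\cup T_i]$ and the suffix $Q_i$ of \Cref{lemma:shell_trans} leading to a shellable arrangement $\SS_i$ whose orientations on $T$ match those of $\AA_i$; such an $\SS_i$ exists because $T$ is good. Since no triple is flipped twice along $Q_i$ and the $T$-pattern at both endpoints of $Q_i$ coincides, triples in $T$ are in fact not flipped at all along $Q_i$, so every arrangement on $P_i$ after the prefix has $T$-pattern $\alpha_i:=\alpha_{\AA}$ with the entries in $T_i$ flipped. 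Distinctness of the $T_i$ therefore yields distinctness of the $\alpha_i$, which immediately gives pairwise disjointness of the suffixes $Q_i$ and distinctness of the targets $\SS_i$. The residual intersections involving the short prefixes are ruled out by a direct case analysis using that distinct triangles $t_i$ produce distinct signotopes. Finally, \Cref{lem:connectivity-lemma} applied with $G=\mathbf{F}_n$, $H=\mathbf{F}_\Lambda$, and $r=n-2$ concludes the proof.
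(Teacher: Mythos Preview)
Your approach is essentially the paper's: build $n-2$ internally disjoint paths from $\AA$ to distinct shellable arrangements and invoke \Cref{lem:connectivity-lemma} with $H=\mathbf{F}_\Lambda$. The Hamilton-cycle idea (via Dirac) for producing the distinct pairs $T_i=\{t_i,t_{\rho(i)}\}$ when $i\leq k$ is a valid alternative to the paper's simpler device of taking $\rho(i)$ to be the first index after $i$ in the cyclic order on $[k]$ for which $\{t_i,t_{\rho(i)}\}$ is compatible.

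There is, however, a genuine gap in your counting for $i>k$. You bound the number of edges of the compatibility graph $H$ with both endpoints in $C_i$ by
\[
|E(H)|-3(k-1)\ \geq\ \binom{k}{2}-\Big\lfloor\tfrac{3k}{2}\Big\rfloor-3(k-1),
\]
which for $k=15$ gives $105-22-42=41$. This just barely works for $n=43$ (at the last step $n-3=40$ pairs are already used), but it fails for $n\in\{44,45\}$: there $k$ can still equal $15$ while $n-3\in\{41,42\}$, so your lower bound does not guarantee a fresh pair. The loss comes from bounding $|E(H)|$ and the number of edges incident to $T\setminus C_i$ separately; the two bounds are not simultaneously tight.

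The fix is to count directly inside $C_i$, as the paper does: since $|C_i|\geq k-3$ and every vertex of $C_i$ has at most three non-neighbours in $T$ (hence at most three in $C_i$), the number of compatible pairs inside $C_i$ is at least $\tfrac{1}{2}(k-3)(k-7)$, which for $k=15$ equals $48>n-3$ for all $n\geq 43$. Replacing your subtraction bound by this direct count closes the gap; everything else in your outline is correct.
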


\begin{proof}
Let $T=\{t_1,\ldots,t_k\}$ be a good set of triangles as described above (Lemma~\ref{lemma:many_t} and~\ref{lemma:good_t}). Then it suffices to show that for $n$ sufficiently large and $k \geq \frac{n}{3}$ it is possible to find $n-2$ different two element subsets $T_i$ of $T=\{t_1,\ldots,t_k\}$ such that
for each $i$ the set $T_i\cup \{t_i\}$ is compatible and for each $i$ with
$i \leq k$ the triangle $t_i$ actually belongs to $T_i$.

For $i \leq k$ define $\rho(i)$ as the first index in the circular order on
$1,\ldots,k$ such that $t_i,t_{\rho(i)}$ is a compatible pair of
triangles. The circular distance of $i$ and $\rho(i)$ is at most four,
therefore, if $k\geq 9$ the pairs are all distinct. To be sure that $k\geq 9$
we need $n\geq 27$.

Recall that for $i > k$ the triangle $t_i$ is an additional triangle not in $T$. We define the pairs $T_i$ one after the other. First we let
$C_i\subset T$ be the set of good triangles compatible with $t_i$. By \Cref{obs:degree} it holds that
$|C_i| \geq k-3$. Since each triangle in~$C_i$ is compatible with at
least $|C_i|-4\geq k-7$ others from this set (again by \Cref{obs:degree}), there are at least
$\frac{1}{2}(k-3)(k-7)$ compatible (unordered) pairs in~$C_i$. 
At most $n-3$ of them have
been assigned as a $T_j$ for some $j$.  Hence, if
$\frac{1}{2}(k-3)(k-7) > n-3$ we find a pair $T_i$ with the required
properties.  Since $k \geq \frac{n}{3}$ it is sufficient to have $n \geq 43$.
\end{proof}

\subsubsection{Finalizing the proof of Theorem~\ref{thm:connectivity}} 
\label{sec:43_to_24}

So far we gave a proof of the statement for $n\geq 43$. Next we argue for the missing cases of~$n$.

\paragraph{\texorpdfstring{$n\leq 7$}{n at most 7}: Realizability:}
It is known that all projective arrangements with at most 8
pseudolines can be realized with lines, see \cite{GoodmanPollack90}.
By adding a line at infinity, we can embed an Euclidean arrangement into a projective arrangement. This operation does not change realizability. This implies that all Euclidean arrangement with $n\leq7$ pseudolines can be realized.
Hence for $n\leq7$ we have $\mathbf{F}_n=\mathbf{FL}_n$
and the connectivity of $\mathbf{F}_n$ follows from \Cref{prop:conn-real}.

\paragraph{\texorpdfstring{$n\geq 34$}{n at most 34}: Larger good sets of triangles:}

In \Cref{ssec:connect-pslines}
we charged three conflicting triangles in~$T$ to each triangle $t_i$ for $i=1,\ldots,n-2$.
We now show that we only have to account for two conflicting~triangles.

First we make sure that every triangle in $T$ is incompatible with at most two other
triangles in~$T$. To achieve this we choose $t_j$ as the first triangle visited when
traversing $\ell_{\tau(j)}$ from left to right. 
A triangle which is in conflict with three others is not leftmost on any of the three incident lines, see \Cref{fig:path_plan}.

When it comes to define the additional triangles
$t_{k+1},\ldots,t_{n-2}$ we identify the set $B$ of all triangles in $\AA$ which are
incompatible to three triangles from $T$. Let $\AA' = \AA[B]$ be the arrangement
obtained by flipping all the triangles in $B$. Flipping a triangle of $B$ destroys
its three incompatible triangles, these belong to $T$. We claim that at most $k - |B|$
out of the $k$ triangles of $T$ are triangles of $\AA'$. Think of a bipartite graph
with sides $B$ and $T$. Each element of $B$ has three neighbors in $T$ while each 
element of $T$ has at most three neighbors in $B$, hence, there are at least $|B|$
elements in~$T$ which have a neighbor in $B$.
Since $\AA'$ has at least $n-2$ triangles and every triangle of $\AA'$ is a triangle of $\AA$
we can avoid picking a triangle of $B$ when we chooses $t_{k+1},\ldots,t_{n-2}$.

The crucial inequality now becomes $\frac{1}{2}(k-2)(k-5) > n-3$
and we only need $n\geq 35$. Using that $k \ge \frac{n}{3}$ is an integer
we see that already $n \ge 34$ is sufficient.

\paragraph{\texorpdfstring{$n\geq 24$}{n at most 24}: Even larger good sets of triangles:}

From Lemma~\ref{lemma:good_t} we know that a sequence $t_1,\ldots,t_k$ of triangles is a good set if
there is a sequence $\ell_{\tau(1)},\ldots,\ell_{\tau(k)}$ of pseudolines such that the first triple in the
sequence of triangles which is incident to $\ell_{\tau(i)}$ is the triangle $t_i$.

In Lemma~\ref{lemma:many_t} we constructed a sequence of triangles and charged three lines to
each triangle in the sequence. To reduce this number we consider the bipartite graph $H_\AA$
whose vertices are the triangles and the pseudolines of $\AA$ and 
edges correspond to incidence between triangles and pseudolines. Let $K$ be a component of $H_\AA$
We construct one by one a sequence of triangles from~$K$. The first triangle $t_1$ is chosen arbitrarily
and its three incident lines are marked as \emph{visited}. Suppose that~$t_1,\ldots,t_{j-1}$ have been constructed
and all the lines incident are marked visited. If there is a pseudoline 
 in $K$ which is is not visited, then we find a triangle $t_j$ in $K$ which is incident
 to a visited and an unvisited line. Let $\ell_{\tau(i)}$ be an unvisited
line incident to $t_j$ and mark the lines incident to $t_j$ visited. 
If there are $s$ lines in~$K$, then the procedure constructs a sequence of at least $\lceil \frac{s-1}{2} \rceil$
triangles.
The sequences of triangles constructed for different components of $H_\AA$ can be 
concatenated arbitrarily to form a sequence of good triangles. If there are $c$ components with an odd
number of lines in $H_\AA$, then this yields a set 
of $T$ of good triangles of size at least $\frac{n-c}{2}$.

We will show below that there are no components with 3 or 5 lines. This implies that odd components contain at 
least 7 lines, whence there are at most $\lfloor \frac{n}{7} \rfloor$ odd components and we find a set of
at least $k=\lceil \frac{3n}{7} \rceil$ triangles. Reviewing the argument from the previous section we
first see that $k\geq 9$ is achieved whenever $n\geq 21$. A triangle in the good set $T$ may by construction have up 
to three incompatible triangles in $T$. For the additional triangles we can again argue that 
they have at most two incompatible triangles in $T$. The condition for finding the appropriate pairs 
$T_i$ now becomes $\frac{1}{2}(k-2)(k-6) > n-3$. With $k=\lceil \frac{3n}{7} \rceil$ we find that
$n \ge 24$ is sufficient.

It remains to show that for $n>5$ there are no components with
3 or 5 lines in $H_\AA$. From the sweeping lemma we know that a line which is incident to only one triangle 
is extremal. Assuming that there is a component with three lines $\ell_1,\ell_2,\ell_3$ and one
triangle~$t$ the extremality of the lines implies that 
for each $i$ all the crossings of $\AA$ are on the side of~$\ell_i$ which contains $t$. 
Hence, all the crossings are in the triangle $t$ which requires $n=3$, a~contradiction.

For the discussion of components with 5 lines we look at the arrangement $\BB$ induced by the  five lines in~$\AA$. 
In general a cell of $\BB$ can be a union of cells of $\AA$. However,
if $t$ is a triangle in~$\BB$ and there is a line~$\ell$ incident to~$t$ and~$t$ is the only 
incident triangle on this side of~$\ell$ in $\BB$, then $t$ is a triangle in~$\AA$:
The side of $\ell$ containing~$t$ contains crossings, hence,~$\ell$ can be swept towards this side 
in $\AA$. Since $\BB$ is a component the first triangle taken by the sweep must belong to $\BB$ and by 
assumption $t$ is the unique choice.
Now consider the possible arrangements $\BB$ of five lines. 
Suppose first that there is a line $\ell$ which is extremal in $\BB$. Since $\BB$ is a component, $\ell$ can only 
be swept in one direction in $\AA$, hence,~$\ell$ is extremal in $\AA$. Consider $\AA\arrminus\{\ell\}$, this 
arrangement has the component $\BB\arrminus\{\ell\}$ with only four lines. In this component we find an extremal line $\ell'$ which must also be extremal in~$\AA\arrminus\{\ell\}$.
Removing this line we end up with an arrangement
$\AA\arrminus\{\ell,\ell'\}$ which has a component with three lines. This contradicts our previous insight.

If $\BB$ has no extremal line, then $\BB$ is a star arrangement consisting of a pentagonal cell
surrounded by five triangles (this can be checked from Fig.~8 in \cite{FelsnerWeil2001}). Take any line of $\BB$ and sweep it away from the pentagon in~$\AA$,
this is equivalent to flipping a star triangle.
It is easily verified that the flip is not generating new triangles, hence, in $H_\AA$ it only
affects the component of $\BB$. Since after the flip $\BB$ has an extremal line we can complete the 
argument as in the previous case.

\begin{remark*}
It is conjectured that every non-monochromatic bicoloring of the pseudolines of an arrangement
with at least three lines  yields a bichromatic triangle, see e.g.\ \cite[Chapter~6]{BjoenerLVWSZ1993} or \cite{fps-aapl-20}.
The bichromatic triangle conjecture can be restated in terms of $H_\AA$, here the conjecture is that the graph is connected.
If the conjecture was true, we could construct a good set of triangles of size~$\frac{n-1}{2}$ in every arrangement of $n$ pseudolines.
This would allow to decrease the bound in \Cref{thm:connectivity} to~$n\geq 22$.
\end{remark*}

\section{Flip Graphs on Arrangements of Pseudocircles}
\label{sec:pcircleflipgraphs}

The outline of this section is as follows.
In \Cref{sec:main_ipa} we prove that every arrangement of pairwise intersecting pseudocircles can be transformed into a cylindrical arrangement by flips.
In \Cref{sec:main_cipa} we then show that the flip graph of cylindrical arrangements of pairwise intersecting pseudocircles is connected,
and hence the flip connectivity carries over to arrangements of pairwise intersecting pseudocircles.
\Cref{sec:diam_cipa,sec:distance} are dedicated to proving asymptotically tight bounds on the diameters of these flip graphs.

\subsection{Proof of \texorpdfstring{\Cref{thm:main_ipa}}{Theorem~\ref{thm:main_ipa}}: Connectivity Intersecting}
\label{sec:main_ipa}

For an arrangement~$\AA$ of $n$ pairwise intersecting pseudocircles,
we fix some point $p$ and then iteratively expand pseudocircles by triangle flips until all pseudocircles contain~$p$, i.e., until the arrangement is cylindrical. 
The flip-connectivity then follows from \Cref{thm:main_cipa}.

For convenience we introduce the notation $\AA - \{C_1, \ldots, C_k\}$ to denote the arrangement that is obtained from~$\AA$ by removing the pseudocircles $\{C_1, \ldots, C_k\}$.
We say that two pseudocircles~$C$ and~$C'$ are \defi{parallel} in $\AA$ if every vertex of $\AA - \{C,C'\}$ lies in  $(\interior{C} \cap \interior{C'})
\cup (\exterior{C} \cap \exterior{C'})$.

\thmMainIpa*

\begin{proof}
Let $\AA$ be an intersecting arrangement of $n$ pseudocircles. We show by induction on~$n$ that $\AA$~can be transformed into a cylindrical arrangement with a finite number of triangle flips. 
The statement of the theorem then follows from \Cref{thm:main_cipa}.

The induction base is trivially fulfilled for $n=2$. 
For the induction step,
we choose $p$ as a point which lies inside a maximum number of pseudocircles.
If $p$ lies in the interior of all pseudocircles, then $\AA$ is already cylindrical and we are done.

Hence, we may assume that there exists a pseudocircle $C$ which does not contain~$p$ in its interior.
We show how to expand $C$ using only triangle flips until it contains~$p$.
First observe that \Cref{lem:snoeyink_hershberger} guarantees the existence of
a flip to expand $C$.
Since~$C$ already intersects all other pseudocircles, 
this must be a triangle or digon-collapse flip. 
As long as there exists a triangle flip expanding $C$, 
we perform it and transform $\AA$ accordingly.

Suppose now that $C$ does not yet contain $p$ and can only be expanded by collapsing a digon~$D$ formed with another pseudocircle~$C'$. Note that we may assume $p$ to not lie in $D$ (and in no other cell adjacent to $C$), since otherwise we could expand $C$ until it contains $p$ without performing any flip.
Furthermore, every other pseudocircle $C''$ intersects the lens $\interior{C} \cap \exterior{C'}$ transversally as otherwise $C''$ could not intersect both, $C$ and~$C'$. 
More specifically, every other pseudocircle $C''$ intersects the lens $\interior{C} \cap \exterior{C'}$ in two arcs, and each arc has one endpoint on~$C$ and one on~$C'$; see  \Cref{fig:sketch_theorem3} for an illustration.
Now we can use \Cref{lemma:zipping} to expand~$C'$ until $C$ and~$C'$ are parallel.

\begin{figure}[htb]
\centering
\includegraphics[scale=0.7, page=1]{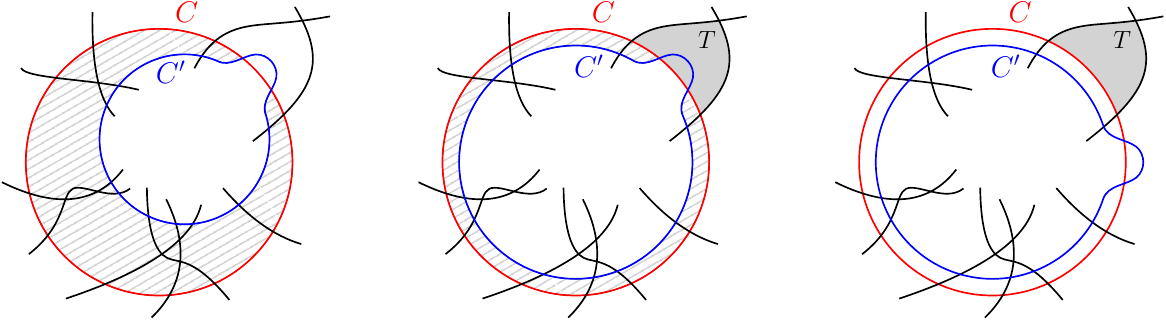}
\caption{
\emph{Left:}~$C$ forms a digon with $C'$ that is in $\exterior{C} \cap \interior{C'}$. 
\emph{Middle:}~flip $C'$ so that it becomes parallel to~$C$.
\emph{Right:}~flip $C'$ so that $T$ becomes a triangle in $\AA$.
}
\label{fig:sketch_theorem3}
\end{figure}

Next consider the arrangement $\AA':=\AA-\{C'\}$. 
By the induction hypothesis, 
$\AA'$ can be transformed into a cylindrical arrangement by a finite sequence of triangle flips.
We now carefully mimic this flip sequence on~$\AA$, while maintaining that $C$ and $C'$ are parallel.
Suppose that a triangle~$T$ in $\AA'$ is flipped.
If none of the edges of~$T$ belongs to~$C$, 
we can directly apply this triangle flip also in~$\AA$. 
If one of the edges $e$ of~$T$ belongs to $C$ and $e$ is crossed by~$C'$, then the digon~$D$ is located along $e$.
In this case, we apply two triangle flips to $C'$ so that the digon is transferred to one of the two neighboring edges of $C$ as illustrated in \Cref{fig:sketch_theorem3}, 
obtaining that $e$ is not crossed by~$C'$ (without changing~$\AA'$).
Finally, if $e$ is not crossed by~$C'$, then we apply the according triangle flip twice, namely, once for $C$ and once for~$C'$.
This concludes the proof.
\end{proof}

\subsection{Proof of \texorpdfstring{\Cref{thm:main_cipa}}{Theorem~\ref{thm:main_cipa}}: Connectivity Cylindrical}
\label{sec:main_cipa}

In this section we prove flip-connectivity for cylindrical intersecting arrangements by showing that any given arrangement can be flipped to a \emph{canonical} arrangement, which we define via an arrangement of pseudoparabolas. An \defi{arrangement of pseudoparabolas} is a finite collection of $x$-monotone curves defined over a common interval such that every two curves are either disjoint or intersect in two
points where the curves cross. 
The following proposition gives a reversible mapping from 
cylindrical arrangements of pseudocircles to arrangements of pseudoparabolas. 
The result has been announced by Bultena et 
al.~\cite[Lemma 1.3]{BultenaGruenbaumRuskey1999} and a full proof has 
been given by Agarwal et~al.~\cite[Lemma~2.11]{AgarwalNPPSS2004}.

\begin{restatable}[\cite{BultenaGruenbaumRuskey1999,AgarwalNPPSS2004}]
{proposition}{lemCylinder}\label{prop:cylinder}
A cylindrical arrangement of pseudocircles $\CC$ can be mapped to an arrangement of pseudoparabolas $\AA$ in an axis-aligned rectangle $B$ such that $\CC$ is isomorphic to the arrangement obtained by identifying the two 
vertical sides of $B$ and mapping the resulting cylindrical surface homeomorphically to a ring in the plane.
\end{restatable}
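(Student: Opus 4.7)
The plan is to unroll the annular region around the center $p$ into a rectangle. Let $p$ be a center of $\CC$, and add a point $\infty$ to compactify the plane to a sphere. Then $S^2 \setminus \{p, \infty\}$ is homeomorphic to an open cylinder $S^1 \times \mathbb{R}$, and each pseudocircle of $\CC$, containing $p$ in its interior, becomes on the cylinder a simple closed curve that separates the two ends, i.e., a generator of the cylinder's fundamental group.

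Next I would choose a simple arc $\gamma$ from $p$ to $\infty$ that avoids every vertex of $\CC$ and crosses each pseudocircle of $\CC$ in exactly one point. Any arc from $p$ to $\infty$ crosses every pseudocircle an odd number of times, since it passes from inside to outside; if some pseudocircle $C$ is crossed more than once, one of the innermost bigons formed by $\gamma$ and $C$ is free of further pseudocircles of $\CC$, so $\gamma$ can be isotoped across this bigon to remove two crossings without introducing new ones with the other pseudocircles. Iterating this bigon-removal terminates with the desired $\gamma$.

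I would then cut the cylinder along $\gamma$ to open it into a closed rectangle $B$, where $p$ and $\infty$ correspond to the top and bottom horizontal sides and the two copies of $\gamma$ to the vertical sides; pointwise identification of the latter reproduces the cylinder. Since $\gamma$ meets each pseudocircle exactly once, every pseudocircle becomes a simple arc from a point on the left vertical side of $B$ to the matching point on the right vertical side, and pairwise crossing patterns are preserved, so any two such arcs cross in $0$ or $2$ points. A final homeomorphism of $B$ that fixes the vertical sides pointwise and simultaneously makes all these arcs $x$-monotone produces the arrangement of pseudoparabolas $\AA$; composing with the identification of the vertical sides and a homeomorphism from the cylinder to a planar ring recovers an arrangement isomorphic to $\CC$.

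The main obstacle will be step 2, the construction of $\gamma$: bigon reduction is classical in surface topology, but here it must be carried out compatibly with \emph{all} other pseudocircles of $\CC$ and while keeping $\gamma$ disjoint from every vertex; the key observation is that pairwise intersections in $\CC$ have at most two points, so bigons can be ordered by inclusion and resolved innermost-first. The simultaneous $x$-monotonization in the last step is topologically routine, say by an induction on the arcs, since each of them connects the two vertical sides and any two of them meet in at most two points.
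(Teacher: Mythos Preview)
The paper does not prove this proposition; it is quoted from the literature (Bultena--Gr\"unbaum--Ruskey and Agarwal et al.), so there is no in-paper argument to compare against. Your overall plan --- cut the punctured sphere along a transversal arc~$\gamma$ and then straighten the resulting arcs into $x$-monotone curves --- is the standard one.

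The gap is in Step~2. An innermost bigon between $\gamma$ and a fixed pseudocircle~$C$ is in general \emph{not} ``free of further pseudocircles of~$\CC$'': some $C'\neq C$ may well traverse it. When you push $\gamma$ across such a bigon you trade the $\gamma$--$C'$ crossings on the old $\gamma$-arc for $\gamma$--$C'$ crossings along the $C$-arc, and the latter count can be strictly larger (for instance if $C'$ meets the $C$-side of the bigon twice and the $\gamma$-side not at all, the move \emph{increases} $|\gamma\cap C'|$ by~$2$). Hence the total number of crossings of $\gamma$ with~$\CC$ need not drop, and the reduction does not terminate as stated. Your fallback, ``order the bigons by inclusion and resolve innermost-first'', does not repair this: even a globally innermost bigon can be crossed transversally by other pseudocircles in exactly this way. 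A correct argument must exploit more carefully that any two pseudocircles meet at most twice --- e.g.\ by constructing $\gamma$ directly cell by cell, always stepping from the current cell to a neighbour contained in one fewer pseudocircle interior (one then has to show, without circularly invoking the present proposition, that such a neighbour always exists in a cylindrical arrangement), or by a sharper potential for the bigon moves. The final $x$-monotonization is also less routine than you suggest: it is essentially the statement to be proved, restated for arcs in a strip connecting the two vertical sides with pairwise at most two crossings, and it deserves its own inductive or sweep argument rather than an appeal to routineness.
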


\noindent
We define a canonical intersecting arrangement $\BB_n^-$ of pseudoparabolas as follows:

\begin{enumerate}
\item On the left, the pseudoparabolas are labelled $C_1, \ldots, C_n$ from top to bottom and on the right, the top to bottom order is the same.
\item For every $1 \leq i \leq n$, the pseudoparabola $C_i$ intersects the other pseudoparabolas first in increasing and then in decreasing order: $C_1, \ldots, C_n, C_n, \ldots, C_1$.
\end{enumerate}

Analogously, $\BB_n^+$ denotes the intersecting arrangement of pseudoparabolas in which $C_i$ intersects the other 
pseudoparabolas first in decreasing and then in increasing 
order. Closing the pseudoparabolas of $\BB_n^-$ and $\BB_n^+$ above yields cylindrical intersecting arrangements of 
pseudocircles, which we call $\AA_n^-$ and $\AA_n^+$, respectively; see \Cref{fig:canonical_wiring_diagram} for an illustration. Note that every triple of pseudocircles forms a NonKrupp$(2)$ in $\AA_n^-$ and a NonKrupp$(4)$ in~$\AA_n^+$.

In the following we show that every cylindrical arrangement can be flipped to both
canonical arrangements, $\AA_n^-$ and~$\AA_n^+$. While one of them is sufficient to prove connectivity (\Cref{thm:main_cipa}), 
both will play an important role to obtain tight bounds on the diameter (\Cref{prop:diam_cipa}).

\begin{figure}[bht]
\centering
\includegraphics[scale=0.7, page=1]{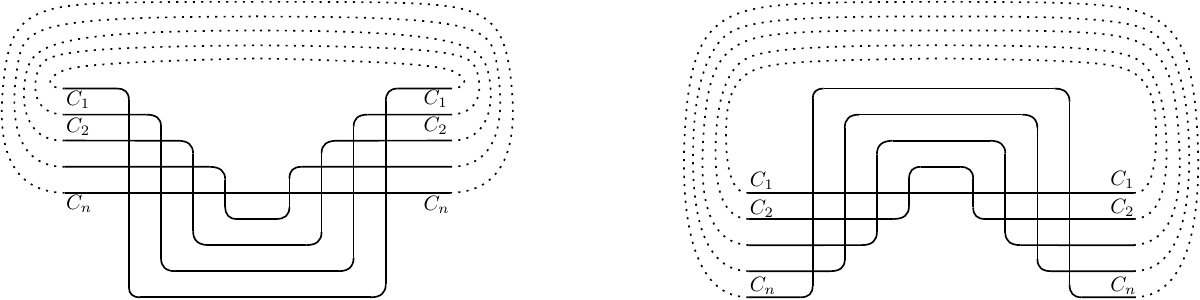}
\caption{
The two canonical arrangements $\AA_n^-$ (left) and $\AA_n^+$ (right).
}
\label{fig:canonical_wiring_diagram}
\end{figure}

\thmMainCipa*

\begin{proof}
Let $\AA$ be an intersecting, cylindrical arrangement of $n$ pseudocircles. Then, using \Cref{prop:cylinder}, we can represent $\AA$ as a pseudoparabola arrangement, in which we label the curves from top to bottom by $C_1, \ldots, C_n$. The idea is to flip the pseudoparabolas downwards one by one in the order of  increasing indices. When $C_n$ has been processed the arrangement~$\BB_n^-$ corresponding to~$\AA_n^-$ is reached. When flipping $C_k$ downwards the following invariant holds (see \Cref{fig:canonical_wiring_diagram2} for an~illustration).

\begin{figure}[tb]
\centering
\includegraphics[page=2,scale=0.9]{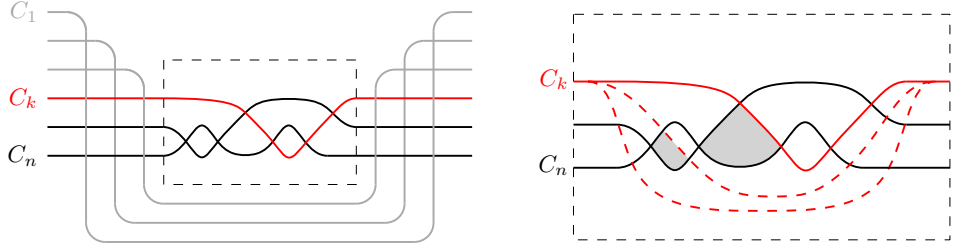}
\caption{An illustration of the proof of \Cref{thm:main_cipa}: flipping the pseudoparabola $C_k$ downwards.
}
\label{fig:canonical_wiring_diagram2}
\end{figure}

\begin{enumerate}
    \item For every $ 1 \le i < k$, the pseudoparabola $C_i$ intersects the other pseudoparabolas 
    in the same order as in~$\BB_n^-$:
    first in increasing and then in decreasing order, that is, 
    $C_1, \ldots, C_n, C_n, \ldots, C_1$.
    
    \item For every $i \geq k$, the pseudoparabola $C_i$ first intersects $C_1, \ldots , C_{k-1}$ in increasing order, then it has the intersections with the pseudoparabolas $C_j$ for $j\in \{k,\ldots,n\}\setminus\{i\}$ in some order, and finally intersects $C_{k-1}, \ldots, C_1$ in decreasing order.
\end{enumerate}

Let $\AA_{\ge k}$ denote the subarrangement induced by $C_k, \ldots, C_n$.
We claim that in $\AA_{\ge k}$ the pseudoparabola $C_k$ can be swept downwards using only triangle flips. Applying \Cref{lem:snoeyink_hershberger} to the subarrangement $\AA_{\ge k}$, we can sweep $C_k$ downwards using a sequence of triangle, digon-create and digon-collapse flips. However, the latter two flips cannot occur: Since every pair of pseudoparabolas already intersects, a digon-create is impossible. 
Assume that a digon-collapse happens, then there is a pseudoparabola $C$ which forms a digon $D$ with $C_k$ 
such that $D$ lies below~$C_k$. The arc of $C$ belonging to $D$ lies below $C_k$ and the complement (in particular the two unbounded arcs of~$C$) lie above~$C_k$. 
This is not possible since $C_k$ is the topmost pseudoparabola in $\AA_{\ge k}$.

By the invariant, all crossings of $\AA_{\ge k}$ lie above all the pseudoparabolas $C_1, \ldots, C_{k-1}$. Therefore, we can perform the flip sequence from $\AA_{\ge k}$ also in the original arrangement~$\AA$. Since we do not flip triangles involving $C_1, \ldots, C_{k-1}$, their intersection orders remain unchanged. Moreover, since in the subarrangement $\AA_{\ge k}$ all crossings between $C_{k+1}, \ldots, C_n$ lie above $C_k$, the order of intersections in~$\AA_{\ge k}$ on $C_k$ is $C_{k+1}, \ldots, C_n, C_n, \ldots, C_{k+1}$. From this it follows that the invariant has been preserved.

When $C_n$ has been processed the invariant implies that the canonical arrangement $\BB_n^-$ has been reached. This completes the proof of \Cref{thm:main_cipa}.
\end{proof}

\subsection{Proof of \texorpdfstring{\Cref{prop:diam_cipa}}{Proposition~\ref{prop:diam_cipa}}: Diameter Cylindrical}\label{sec:diam_cipa}

\propDiamCipa*

\begin{proof}  
Concerning the upper bound,
following the proof of \Cref{thm:main_cipa}, 
we can transform any cylindrical arrangement~$\AA$ of $n$ pairwise intersecting pseudocircles 
into the canonical arrangement~$\AA_n^-$. 
To do so, we iteratively transform the selected pseudocircle $C_i$
by only flipping downwards
until all crossings of $C_j$ and $C_k$ with $j,k > i$ 
are above $C_i$.
For every $C_i$, there are at most $2\binom{n-i}{2}$ such crossings below $C_i$ and hence, in total, we need at most 
\[
  \sum_{i=1}^n 2\binom{n-i}{2} = 2\sum_{i=2}^{n-1} \binom{i}{2} = 2\binom{n}{3}
\]
flips to reach~$\AA_n^-$.
Hence, we need at most $4\binom{n}{3}$ flips to transform any two arrangements into each~other.

Concerning the lower bound, 
note that $2\binom{n}{3}$ flips are needed to transform $\AA_n^-$ into $\AA_n^+$ because every NonKrupp$(2)$ 
needs to be flipped twice to become a NonKrupp$(4)$.
\end{proof}

\subsection{Proof of \texorpdfstring{\Cref{prop:diam_ipa}}{Proposition~\ref{prop:diam_ipa}}: Diameter Intersecting}
\label{sec:distance}
 
Extending the notion of parallelism from Section~\ref{sec:main_ipa}, we call a subset $\CC$ of pseudocircles from an intersecting arrangement~$\AA$ \defi{parallel}
if every vertex of the arrangement $\AA \arrminus \CC$ either lies in the exterior of every $C \in \CC$ or in the interior of every $C \in \CC$. 
A set of parallel pseudocircles is also called a \defi{bundle}. 
Note that, for any bundle~$\CC$ of size $k$ in $\AA$, the order of intersections of the $n-k$ pseudocircles from $\AA - \CC$ is the same for every $C \in \CC$. Hence, $\AA \arrminus \CC$ splits $\CC$ into~$2(n-k)$ \defi{parts}.
Moreover, the crossings within the bundle can easily be moved between the parts using only triangle flips; see \Cref{fig:partition}.

\propDiamIpa*
\begin{proof}
In the proof of \Cref{thm:main_ipa} showed that any two intersecting arrangement of $n$ pseudocircles can be transformed into each other via finitely many triangle flips. 
Next we show that $O(n^3)$ flips are always sufficient. 
We then conclude the cubic diameter by showing that $\Omega(n^3)$ flips are sometimes necessary.

Let $\AA$ be an intersecting arrangement of $n$ pseudocircles.
The general situation 
is that we have a point $p$ and a bundle $\CC_{[k]} = \{C_1,\ldots,C_k\}$ of pseudocircles not yet containing $p$, which are simultaneously expanded to eventually contain~$p$. 
In order to guarantee the cubic bound on the diameter, we need to carefully distribute and move the crossings formed by the bundle in relation to crossings of other pseudocircles with the bundle.

\paragraph{Initial setup.} 
We start with an arbitrary point $p$ from the plane and initialize $\DD$ as the set of pseudocircles that already contain $p$ in the interior. 
We choose $C_1$ as an arbitrary pseudocircle which is not yet in~$\DD$,
set the bundle size to $k=1$, set $\CC_{[1]} := \{C_1\}$, and proceed with the following~steps.

\paragraph{Step~1 (Reset if bundle becomes too large).}

If $ k > \frac{n}{2}$,
we reset the process by choosing a new 
$p$ as an interior point of the bundle 
$\CC_{[k]}$ 
and
set $\DD := \CC_{[k]}$.
Next, we  choose $C_1$ as an arbitrary pseudocircle from $\AA \arrminus \DD$,
set the bundle size to $k:=1$, and set $\CC_{[1]} := \{C_1\}$.
Note that such a reset happens at most once, because,
as soon as $|\DD| > \frac{n}{2}$ holds,
the bundle size~$k$ cannot exceed $\frac{n}{2}$ again.

\begin{figure}[htb]
\centering
\includegraphics[scale=0.75]{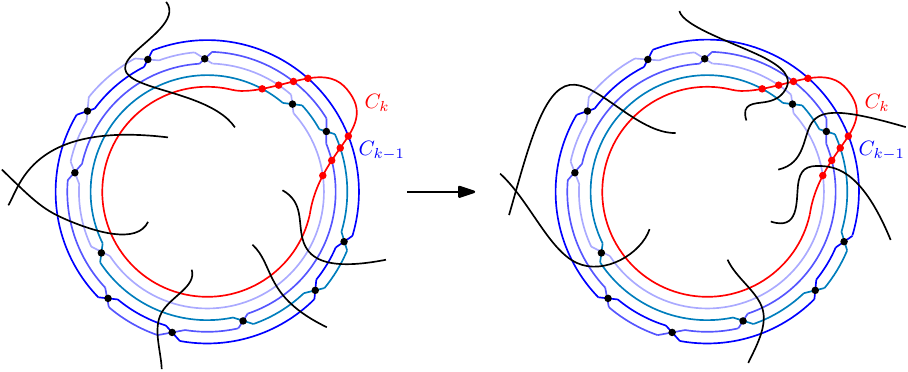}
\caption{An illustration of the distribution of crossings in step~2.
\emph{Left:}~Well-distributed crossings w.r.t.\ to the bundle $C_1, \ldots, C_{k-1}$. 
\emph{Right:}~Well-distributed crossings w.r.t.\ to the bundle $C_1, \ldots, C_{k}$. 
}

\label{fig:partition}
\end{figure}

\paragraph{Step~2 (Distribute crossings).}
If $k \ge 2$ then the $k$ pseudocircles 
in the bundle $\CC_{[k]}$ form~$2\binom{k}{2}$ crossings.
Also, $\CC_{[k]}$ is split
by the $n-k$ pseudocircles from 
$\AA \arrminus \CC_{[k]}$ 
into  $2(n-k)$ parts.
The aim is to distribute the crossings of $\CC_{[k]}$ such that 
each part contains at most $\frac{k^2}{n}$ crossings, where $\frac{k^2}{n} \ge \frac{2\binom{k}{2}}{2(n-k)}$ due to $k \le \frac{n}{2}$.
In this case we say that the crossings are \defi{well-distributed}.

We now describe how to maintain the property that crossings 
are well-distributed when a new pseudocircle~$C_k$ enters the bundle $\CC_{[k-1]}$.
The $2\binom{k-1}{2}$ crossings formed by $\CC_{[k-1]}$ 
are well-distributed among the parts of $\CC_{[k-1]}$ 
that are induced by $\AA \arrminus \CC_{[k-1]}$,
i.e., 
each part contains at most $\frac{(k-1)^2}{n}$ crossings.
When $C_k$ enters the bundle,
two boundaries between parts of size at most $\frac{(k-1)^2}{n}$
disappear and on the old boundaries, $2k-2$ crossings involving~$C_k$ appear, see \Cref{fig:partition}. To move crossings
from parts of the expanded bundle $\CC_{[k]}$ which contain 
too many crossings into sparser parts of it we use triangle flips.

The cost for moving an excess of one crossing to a part where it complies
with the bound is at most $n-k$, i.e., at most $n-k$ triangle flips
are needed.
Hence, to reach well-distribution 
for the expanded bundle $\CC_{[k]}$, we need at most
\[
\left(2\frac{(k-1)^2}{n}+2k-2\right)\cdot (n-k) \ < \ \left(2\frac{k^2}{n}+2k\right)\cdot n \ \le \ 3 kn
\]
triangle flips, where we used $k \leq \frac{n}{2}$ for the last inequality.

\paragraph{Step~3 (Expand bundle).}
Let $\AA' = \AA - \CC_{[k-1]}$. 
Using \Cref{lem:snoeyink_hershberger}, 
we expand $C_k$~in~$\AA'$ via a sequence of $x_k$ triangle flips until $C_k$ either contains $p$ or forms a digon in $\AA'$ with another pseudocircle, which we denote by~$C_{k+1}$. 
We next show how to mimic this flip sequence in~$\AA$.

For a triangle $T$ flipped in the sequence in~$\AA'$, denote by $C_k,D,E$ the three involved pseudocircles.
While in~$\AA'$, the pseudocircle $C_k$ moves over the crossing of $D$ and $E$,
in $\AA$ we have to move the entire bundle $\CC_{[k]}$ over this crossing.
Note that the pseudocircles $D$ and~$E$ cross $C_k$ consecutively along $C_k$ 
in $\AA'$. Hence they bound a part $P$ of~$\CC_{[k]}$, which contains at 
most $\frac{k^2}{n}$ crossings of~$\CC_{[k]}$.
The strategy is to transfer 
all crossings of the bundle from the part $P$ to
one of its neighboring parts.
As each crossing can be transferred  by a single triangle flip, 
the total transfer requires at most $\frac{k^2}{n}$ flips.
\Cref{fig:partition_move} gives an~illustration.

\begin{figure}[htb]
\centering
\includegraphics[scale=0.75]{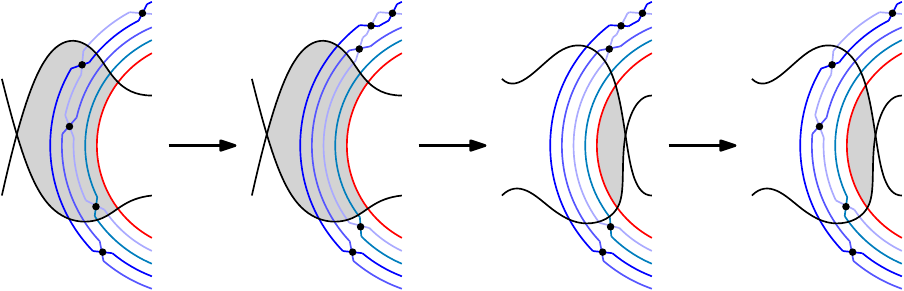}
\caption{An illustration of the expanding step. 
}

\label{fig:partition_move}
\end{figure}

Next we move the pseudocircles $C_1,\ldots,C_k$ from $\CC_{[k]}$ 
over the crossing of $D$ and~$E$. 
More precisely, we flip the triangles determined by $(C_i,D,E)$ for all~$i \in \{1,\ldots,k\}$.
After performing 
these $k$ flips, we move all previously transferred 
crossings back to ensure that each part of $\CC_{[k]}$ again contains at most $\frac{k^2}{n}$ crossings, which also requires at most $\frac{k^2}{n}$ flips.

A single flip of $C_k$ in $\AA'$ thus maps to a sequence of at most 
$(k+2\frac{k^2}{n}) \le 2 k$ triangle flips of $\CC_{[k]}$ in $\AA$. 
We call this sequence a 
\defi{bundle flip}.
Since we had 
$x_k$ triangle flips when expanding $C_k$ in $\AA'$, we get a total
of at most $2kx_k$ triangle flips to move 
$\CC_{[k]}$
in~$\AA$.

If the process 
terminates with $C_k$ containing $p$ in the interior, then also the other pseudocircles $C_1,\ldots,C_{k-1}$ from the bundle contain $p$ in the interior. 
In this case we add $\CC_{[k]}$ to~$\DD$.
Then we pick another pseudocircle $C_1$ which is not yet in $\DD$ arbitrarily, set $k=1$, and continue with Step~1.
Also, if $p$ is in a cell which is incident to~$C_k$ in $\AA'$,
then we can move $p$ to the adjacent cell inside $C_k$. This move
can also be done in $\AA$ where $p$ is moved across the whole bundle.
In this case again we add the bundle to $\DD$, pick a new $C_1$ and 
continue with Step~1.
Otherwise we proceed with Step~4.

\paragraph{Step~4 (Add $C_{k+1}$ to bundle).}
If $C_k$ does not yet contain $p$ and cannot be further expanded
by triangle flips, $C_k$ forms a digon with another
pseudocircle $C_{k+1}$ from $\AA \arrminus \CC_{[k]}$. 
Since the digon lies outside of $C_k$ and is incident to $C_k$, 
we know that $p$ is not in the digon (otherwise we would have 
moved $p$ to the other side of $C_k$). 
Consider again the arrangement $\AA \arrminus \CC_{[k-1]}$.
Using \Cref{lemma:zipping}, we can expand $C_{k+1}$ in $\AA \arrminus \CC_{[k-1]}$ via triangle flips until it is parallel to~$C_{k}$.
Since none of those flips involves $C_k$, the same flips can be performed in $\AA$. 
Before those flips, at most $2\binom{n-k-1}{2}$ crossings 
are located between $C_k$ and $C_{k+1}$.
As each flip reduces this number by one, the expansion process for $C_{k+1}$ requires  
at most $2\binom{n-k-1}{2} < n^2$ triangle flips.
After performing these flips, $C_{k+1}$ is parallel to the entire bundle $\CC_{[k]}$.
We set $k := k+1$, $\CC_{[k+1]}:=\CC_{[k]} \cup \{C_{k+1}\}$, and proceed with Step~1.

\paragraph{Analysis.}

In Step~1, we reset the procedure at most once. 
Hence, each of Steps 2 to 5 is performed at most twice for each pseudocircle.

Step~2 (well-distribution) is performed for each of 
the $n$ pseudocircles at most once and each time $O(n^2)$ flips are
sufficient. In total, this gives $O(n^3)$ flips.

Step~3 (expand bundle) is a bit more involved.
When a bundle has reached $p$ we add the bundle to $\DD$ and restart with a new $C_1$, i.e., a new seed for a bundle. 
Suppose that bundles of sizes $n_1, n_2, \ldots n_b$ are added to~$\DD$. Then $n_1+n_2 + \ldots +n_b \le n$ holds. 
During processing a bundle $C_1,\ldots,C_{n_i}$, we perform
$(x_1+\ldots+x_{n_i})$ bundle flips.
Whenever we perform a bundle flip, this reduces the number of crossings which lie outside of the pseudocircles 
$C_1,\ldots,C_{n_i}$. 
Hence we have $x_1+\ldots+x_{n_i} \le 2 \binom{n}{2}$ and 
the number of triangle flips in Step~3 for a bundle of size~$n_i$ is
bounded by 
$(x_1+\ldots+x_{n_i}) \cdot 2  n_i \le 2  n^2 \cdot n_i$.
The number of triangle flips in Step~3 for the expansion of all $b$ bundles
can therefore be bounded by $2n^2 \cdot (n_1+\ldots+n_b) 
\le 2n^3.$

Step~4 (add to bundle) is again easy. 
For each of the $n$ pseudocircles, 
the overall expansion can be performed 
with at most $n^2$ flips. 
For all pseudocircles, this gives $O(n^3)$ flips in Step~3.

Summing up over all steps and multiplying the result by two for the possible case of one reset in Step~1,
this completes the proof for the cubic upper bound.

\paragraph{Lower bound.} 
We again consider the two canonical 
cylindrical arrangements $\AA^+_n$ and~$\AA^-_n$
from Section~\ref{sec:diam_cipa}
and show that their flip-distance in the triangle flip graph of intersecting arrangements is also $2 \binom{n}{3}$. 
Note that 
the triangle flip graph of cylindrical intersecting arrangements
is an induced subgraph of 
the triangle flip graph of intersecting arrangements.
Recall that each triple of pseudocircles in $\AA^-_n$ is a NonKrupp$(2)$ while each 
triple of pseudocircles in $\AA^+_n$ is a NonKrupp$(4)$.
In the triangle flip graph of arrangements of three pairwise intersecting pseudocircles,
the flip distance between
NonKrupp$(2)$ and NonKrupp$(4)$ is~2. 
Hence, at least $2 \binom{n}{3} = \Theta(n^3)$ triangle flips are needed to get from
$\AA^-_n$ to $\AA^+_n$. 
This completes the proof of \Cref{prop:diam_ipa}.
\end{proof}

\section{Characterization of Cylindrical Arrangements}
\label{sec:cylindrical}

In this section we study cylindrical arrangements in more detail and provide new cha\-rac\-teri\-za\-tions for them. 
The following technical lemma, a result of independent interest, will be convenient.
We remark that a similar result was also known to Snoeyink and Hershberger (cf.~\cite[Section~4.2]{SnoeyinkHershberger1991}). 
Let $\AA$ be an arrangement of pseudocircles, let $C$ be a pseudocircle
of~$\AA$, and let $x$ be a point on $C$ which is not a vertex. 
A flip of $C$ is 
\defi{$x$-restricted} if $x$ is not on the boundary of the flipped triangle or digon. 
Note that a digon-create flip can always be performed in an $x$-restricted manner.

\begin{lemma}
\label{cor:snoeyink_hershberger}
Let $\AA$ be an arrangement of pseudocircles, let $C $ be a pseudocircle in~$\AA$, and let $x$ be a point on~$C$ which is not a vertex. 
Then $C$ admits a sequence of 
$x$-restricted expanding flips such that at the end of the sequence~$C$ is incident to the unbounded cell of the arrangement.
\end{lemma}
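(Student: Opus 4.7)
The plan is to deduce the lemma from the Snoeyink--Hershberger sweeping lemma (\Cref{lem:snoeyink_hershberger}) applied to $C$ in $\AA$, refined so as to respect the $x$-restriction. Concretely, I would apply \Cref{lem:snoeyink_hershberger} to $C$ in $\AA$ and restrict attention to the \emph{growing phase}, which by construction consists entirely of expanding flips (triangle flips, digon-creates, and digon-collapses). At the end of the growing phase, $C$ has been expanded maximally and is therefore incident to the unbounded cell of the resulting arrangement. This handles the existence of an expanding sequence in the absence of the $x$-restriction.

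To incorporate the $x$-restriction, I would view the sweep as a continuous family $\{C_t\}_{t\in[0,1]}$ of pseudocircles with $C_0=C$ and $C_1$ incident to the unbounded cell, and parametrize the family so that $x\in C_t$ for every $t$ and $x$ remains stationary in the plane. Each discrete flip is then an event where $C_t$ sweeps over a vertex of $\AA\arrminus\{C\}$ or becomes tangent to another pseudocircle, and since $x$ is a single non-vertex point, a generic choice of the continuous family avoids any event occurring precisely at the moment when $x$ lies on the boundary of the triangle or digon involved. Consequently, every discrete flip in the associated sequence is $x$-restricted.

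The main obstacle is to make this genericity argument rigorous. I would formalise it combinatorially by induction on the number of vertices of $\AA$ strictly in the exterior of $C$: provided $C$ is not yet incident to the unbounded cell, an $x$-restricted expanding flip exists. Among all cells of $\AA$ adjacent to $C$ from outside, at most one has $x$ on its $C$-edge, and \Cref{lem:snoeyink_hershberger} guarantees at least one expanding flip on $C$; if the unique candidate flip involves $x$, one first performs an auxiliary expanding flip further out in the exterior of $C$ (for instance by applying \Cref{lem:snoeyink_hershberger} to a suitable subarrangement of the exterior), which then unblocks an alternative expanding flip for $C$ away from~$x$. Each such flip strictly decreases the exterior vertex count while keeping $x$ on $C$, completing the induction and hence the proof.
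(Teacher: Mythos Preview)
Your proposal has a genuine gap in the crucial step. The first two paragraphs are fine as motivation, and you correctly identify that the real work is the case where the \emph{only} expanding flip of $C$ is the one involving the edge through~$x$. But your proposed fix in the third paragraph does not close this case. The lemma asks for a sequence of flips \emph{of $C$}; you are not permitted to ``perform an auxiliary expanding flip further out in the exterior of $C$'' on some other pseudocircle, and if you mean a flip of $C$ itself, you have just assumed there is none away from~$x$. Applying \Cref{lem:snoeyink_hershberger} to a subarrangement does not help either: a triangle or digon for $C$ in a subarrangement need not be one in~$\AA$. So the inductive step, as written, has no mechanism to produce a second expanding flip of~$C$, and the argument stalls precisely at the hard case. (Minor point: your potential function, the number of exterior vertices, is not obviously monotone under digon-create flips, but this is secondary.)

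The paper's argument is short and worth knowing. Take a maximal sequence of $x$-restricted expanding flips and suppose for contradiction that $C$ is still not incident to the unbounded cell. By \Cref{lem:snoeyink_hershberger} some expanding flip exists, and by maximality it must be the unique cell $z_x$ adjacent to $C$ along the edge through~$x$. Now apply a stereographic map that sends $z_x$ to the unbounded cell; let $z_o$ denote the formerly unbounded cell. The combinatorics of cells adjacent to $C$ from outside are unchanged except that $z_x$ is no longer a candidate (it is now unbounded) and $z_o$ has become bounded. Applying \Cref{lem:snoeyink_hershberger} again yields an expanding flip in the new picture; since $z_x$ was the only one before, the new flip must be $z_o$ itself, which means $C$ was already incident to the original unbounded cell --- a contradiction. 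The missing idea in your approach is exactly this change of the unbounded cell to force a second application of the sweeping lemma.
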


\begin{proof}
If $x$ belongs to the unbounded cell of $\AA'=\AA \arrminus C$ we have nothing to show, hence, we assume that this is not the case. 
Consider a maximal sequence of $x$-restricted expanding flips applied to $C$ and assume that $C$ is not incident to the unbounded cell. In particular it cannot be the case that $\AA \arrminus C$ is fully contained in the interior of~$C$ and hence, using \Cref{lem:snoeyink_hershberger}, there must be an expanding flip for $C$, which by assumption is not $x$-restricted. Hence, it is a triangle or a digon-collapse flip that involves the edge of $C$ containing $x$. Let us denote this triangle or digon by $z_x$.

Now we transform $\AA$ via a stereographic mapping that makes $z_x$ the unbounded cell. Furthermore, let $z_o$ be the cell which was the unbounded cell before.  
Since $z_x \neq z_o$ and both remain in $\exterior{C}$ we again refer to \Cref{lem:snoeyink_hershberger} to conclude that $C$ 
admits an expanding flip. This must be a flip which was not possible before, hence, it is the flip of the cell $z_o$. This reveals that $C$ had been incident to the unbounded cell; a contradiction.
\end{proof}

We need a few more notions: The \defi{distance} between two cells $z$, $z'$ in an arrangement of pseudocircles is 
the minimum number of pseudocircles that a curve which starts in the interior of~$z$ 
and ends in the interior of $z'$ must cross. The \defi{eccentricity} of a cell $z$ 
is the maximum distance between $z$ and any other cell~$z'$. 

For the following characterization of cylindrical arrangements, we consider all 
pseudocircles to be oriented in counterclockwise direction, 
which induces an orientation on the edges of the arrangement. 
Consider the four intersecting 
arrangements of three pseudocircles (\Cref{fig:fourTypes}) with this orientation 
and note that only NonKrupp$(3)$ has a cell with a clockwise boundary.

\begin{restatable}{proposition}{propCylindricalArrangements}
\label{prop:cylindrical_arrangements}
    Let $\AA$ be an arrangement of $n$ pseudocircles with pairwise overlapping interiors.  
    Then, the following five statements are equivalent: 
    \begin{enumerate}[(1)]
	\itemsep0ex 
        \item $\AA$ is cylindrical.
	\item $\AA$ does not contain a NonKrupp$(3)$ as a subarrangement. 
        \item There is no clockwise oriented cycle in~$\AA$.
        \item There is no clockwise oriented cell in~$\AA$.
        \item The unbounded cell has eccentricity~$n$.
    \end{enumerate}
\end{restatable}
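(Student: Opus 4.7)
Orient all pseudocircles counterclockwise, inducing a direction on every arc. A directed cycle is \emph{clockwise} if its enclosed bounded region lies on its right; equivalently, a cell is clockwise exactly when it lies in the exterior of every bounding pseudocircle. Among the four triples in \Cref{fig:fourTypes}, only NonKrupp(3) has a bounded clockwise cell, namely its central triangle. I plan to prove the equivalences via the chain $(1) \Rightarrow (2) \Rightarrow (4) \Rightarrow (3) \Rightarrow (1)$ together with the separate equivalence $(1) \Leftrightarrow (5)$.

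The direction $(1) \Rightarrow (2)$ is immediate, since a common interior point of $\AA$ belongs to every triple and so forbids any NonKrupp(3) sub-triple. For $(2) \Rightarrow (4)$, I argue contrapositively: if $z$ is a bounded clockwise cell with bounding pseudocircles $C_{i_1}, \dots, C_{i_k}$, then the cell $\tilde z$ of the sub-arrangement $\AA|_{\{C_{i_1}, \dots, C_{i_k}\}}$ containing $z$ must be bounded; otherwise $\partial z$ would necessarily contain arcs of some non-bounding pseudocircle. The sub-arrangement then has a bounded ``outside all'' cell, which I reduce inductively on $k$ (base case $k=3$ being direct from the classification in \Cref{fig:fourTypes}) to a NonKrupp(3) sub-triple, contradicting $(2)$. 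For $(4) \Rightarrow (3)$, take a clockwise cycle $\gamma$ of minimum enclosed area: if its enclosed region were not a single cell, following directed arcs from any vertex strictly inside $\gamma$ while turning right at each subsequent vertex traces the boundary of an inner adjacent cell, yielding a strictly smaller clockwise cycle and contradicting minimality. For $(3) \Rightarrow (1)$: if $\AA$ is not cylindrical, a Helly-type theorem for pseudodisks yields a NonKrupp(3) sub-triple; its central cell's three-arc boundary, possibly subdivided by other pseudocircles of $\AA$ but still following the ccw orientations of the three, remains a directed clockwise cycle in $\AA$.

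For $(1) \Leftrightarrow (5)$: writing $f(z)$ for the number of pseudocircles containing a cell $z$, one always has $d(z) \geq f(z)$, since each containing pseudocircle must be crossed an odd number of times on a path from the unbounded cell to $z$. Under $(1)$, the cell $z_p$ containing the common interior point has depth $n$, so its distance from the unbounded cell is at least $n$; using \Cref{cor:snoeyink_hershberger}, I would construct for each cell $z$ a monotone dual-graph path of length $f(z)$, showing the eccentricity is exactly $n$. Conversely, any cell at distance $n$ must have depth $n$: on a shortest path of length $n$, any entering-then-exiting pair of crossings of a non-containing pseudocircle could be removed by a local monotonization, so no such wasted crossings occur, forcing $f(z) = n$ and hence cylindricity.

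I expect the hardest step to be the Helly-type result for pseudodisks, invoked in both $(2) \Rightarrow (4)$ and $(3) \Rightarrow (1)$: deducing a common interior point (or, dually, producing a NonKrupp(3) triple from a bounded ``outside all'' cell) from triple-wise intersection data requires a careful inductive argument, most naturally carried out using the sweeping lemma to reduce the number of pseudocircles while preserving the relevant bounded cell.
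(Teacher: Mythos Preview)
Your cycle $(1)\Rightarrow(2)\Rightarrow(4)\Rightarrow(3)\Rightarrow(1)$ and the paper's cycle $(1)\Rightarrow(4)\Rightarrow(3)\Rightarrow(2)\Rightarrow(1)$ share the two easy links and the Helly step, but differ in one place: the paper gets $(1)\Rightarrow(4)$ for free from the pseudoparabola representation (\Cref{prop:cylinder}), since $x$-monotone curves cannot bound a clockwise cell. Your replacement $(2)\Rightarrow(4)$ is doable, but the inductive reduction on the number of bounding circles is not as innocent as you suggest: when you delete one of the $k$ bounding circles, the bounded outside-all cell may merge with neighbouring cells and you must argue it stays bounded. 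In fact the cleanest way to finish that step goes through Helly and the pseudoparabola picture anyway, so you gain nothing over the paper's route.

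The genuine gap is in $(5)\Rightarrow(1)$. Your ``local monotonization'' would amount to the claim that on a shortest curve from the unbounded cell to $z$ one never crosses a pseudocircle not containing $z$, equivalently $d(z)=f(z)$ for every cell. This is false in general: the central triangle of a NonKrupp$(3)$ has $f=0$ but $d=2$. More structurally, a bounded cell is a strict local minimum of $f$ exactly when it is clockwise, so whenever the arrangement is \emph{not} cylindrical your monotone-escape argument breaks down precisely where you need it, and the implication becomes circular. The paper handles $(5)\Rightarrow(1)$ by a completely different, constructive argument: given a cell $z$ outside some $C$, it takes a parallel copy $C'$ of $C$, expands $C'$ via the sweeping lemma until it meets $z$, and then continues expanding in an $x$-restricted way (\Cref{cor:snoeyink_hershberger}) until $C'$ touches the unbounded cell. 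Since $C'$ avoids $C$, it has at most $2(n-1)$ crossings, so one of its two arcs between $z$ and infinity has at most $n-1$ crossings, giving $d(z)\le n-1$. This is also why \Cref{cor:snoeyink_hershberger} is stated just before the proposition; you invoke it for $(1)\Rightarrow(5)$, but its real role is here, and for $(1)\Rightarrow(5)$ the pseudoparabola picture again suffices.
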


\begin{proof}
	$(1) \implies (4)$: Let $\AA$ be cylindrical. Then $\AA$ can be drawn as a pseudoparabola arrangement (\Cref{prop:cylinder}). Since all pseudoparabolas are $x$-monotone, all edges in the arrangement are oriented in a common direction. Hence, there cannot be a clockwise cycle, and in particular there cannot be a clockwise cell.
    
	$(4) \implies (3)$: Suppose that there is a clockwise oriented cycle. Let $z$ be a clockwise oriented cycle that is minimal with respect to the number of cells it contains and assume that $z$ is not a cell. Then there exists a pseudocircle $C$ that intersects $z$. Using an arc of $C$, the interior of $z$ can be split into two subregions, one of which must again be clockwise; a contradiction to the minimality of $z$.

	$(3) \implies (2)$: Every NonKrupp$(3)$ forms a clockwise oriented cycle by definition.

	$(2) \implies (1)$: For this implication we make use of the topological version of Helly's theorem 
 (cf.~\cite{ECKHOFF1993}). First, the intersection of the interiors of two pseudocircles in $\AA$ is by assumption non-empty and by definition simply connected. We claim that the intersection of the interiors of every three pseudocircles $C_1, C_2, C_3$ is non-empty. Let us first consider the case that two of the involved pseudocircles are non-intersecting, i.e., one is contained inside the other, say $\interior{C_1} \subset \interior{C_2}$. Then the intersection of $\interior{C_1}$ and $\interior{C_3}$ is also contained in $\interior{C_2}$. 
    It remains to consider the four intersecting subarrangements (see \Cref{fig:fourTypes}).  
    Since NonKrupp$(3)$ is excluded, the interiors of the three pseudocircles have a 
    common intersection.
    From the topological version of Helly's theorem, it then follows that the intersection of the interiors of all pseudocircles in $\AA$ is non-empty. 

	$(1) \implies (5)$: Again by \Cref{prop:cylinder}, we know that 
    $\AA$ can be represented 
    as an arrangement of pseudoparabolas. Due to the monotonicity of 
    the pseudoparabolas, in order to reach the bottom cell from the top 
    cell, one needs to traverse each of the $n$ pseudocircles, while the other cells require fewer traversals. Hence, the eccentricity
    of the unbounded cell equals $n$.

	$(5) \implies (1)$: 
    Let $\AA$ be an arrangement of $n$ pseudocircles which is not cylindrical. 
    Then we show that the eccentricity of the unbounded cell is 
    strictly less than $n$. Let $z$ be a bounded cell.  Since $\AA$ is not
    cylindrical, there is a pseudocircle $C$ in $\AA$ which has $z$ in  
    $\exterior{C}$. Our goal is to construct a new pseudocircle $C'$ from $C$, 
    which is incident to the unbounded cell, traverses $z$ and does not intersect 
    $C$. Such a $C'$ has at most $2(n-1)$ crossings, and therefore, along one of 
    the two arcs of $C'$ between $z$ and the unbounded cell, $C'$ has at 
    most $n-1$ crossings. This arc shows that the distance of $z$ and the unbounded cell is at most $n-1$. 
    
    It remains to construct $C'$. Begin with a slightly blown up parallel
    copy $C'$ of the pseudocircle~$C$, i.e., the ring formed by $C$ and $C'$ has no crossing in the interior and $C'$ as the outer boundary.     
    Expand $C'$ by performing flips into $\exterior{C'}$, this can be done by \Cref{lem:snoeyink_hershberger}. Eventually $C'$ will reach~$z$. Pick a point $x$ on the intersection of $C$ and $z$. Now continue with $x$-restricted 
    expanding flips. \Cref{cor:snoeyink_hershberger} implies that eventually $C'$ will become incident to the unbounded cell. Now $C'$ still traverses $z$ and is incident to the unbounded cell, and since $C$ was in $\interior{C'}$ initially and all the flips were expanding we still have $C \subset \interior{C'}$.    
\end{proof}

\section{Conclusion}\label{sec:conclusion}

In this work we studied arrangements of pseudolines and pseudocircles, in particular we 
contributed  new results regarding the connectivity of the corresponding 
triangle flip graphs. 

In the context of pseudoline arrangements, we studied the triangle flip graph $\mathbf{F}_n$. This graph was also studied by Gutierres,  Mamede, and Santos \cite{GuiterresMamedeSantos} as the graph of \textit{commutation classes} of \textit{reduced words} of the reverse of the identity permutation.
We proved a tight bound for the connectivity: $\mathbf{F}_n$ is  $(n-2)$-connected (for $n \le 7$ and~$n \ge 24$).
This proves \Cref{conjecture:signotopes} about the $(n-r+1)$-connectivity 
of the flip graph of $r$-signotopes on $n$ elements in the case $r=3$. 
For rank $r \ge 4$, \Cref{conjecture:signotopes} remains~open.

Another generalization of arrangements of pseudolines 
into higher dimensions are oriented matroids:
rank~$r$ oriented matroids can be represented as pseudohyperplane arrangements in the $(r-1)$-dimensional projective space. Mutations between oriented matroids are  flips of simplicial cells in the corresponding arrangements.
While the flip graph of realizable oriented matroids (arrangements of proper hyperplanes) 
and the flip graph of higher dimensional signotopes are connected \cite{Shannon1979,FelsnerWeil2001} 
(both are subgraphs of the flip graph of oriented matroids),
it remains a central problem in the theory of oriented matroids 
to verify whether every simple oriented matroid has a mutation 
(i.e., whether the flip graph contains isolated vertices);
\cite[Conjecture~7.3.10]{BjoenerLVWSZ1993}. 

Our proof of \cref{thm:connectivity} makes use of the fact that $\mathbf{F}_n$ contains a subgraph $\mathbf{F}_\Lambda$ that is the skeleton of a polytope. 
We recall a question from \cite{FeZi01}, which is still open:

\begin{question}
    Is the graph $\mathbf{F}_n$ polytopal for some $n\geq 6$?
\end{question}

It is also worth asking for the connectivity of the triangle flip graphs of arrangements in ambient spaces
other than the Euclidean plane, especially the following:
\begin{question}
    What is the connectivity of the 
    triangle flip graph of arrangements of $n$ pseudolines in the projective plane?
\end{question}

The number of simple arrangements of $n$ pseudolines and $n$ pseudocircles
(i.e., the number of vertices in the respective flip graphs) 
are both known to be of order $2^{\Theta(n^2)}$, see \cite{dumitrescuMandal2020,felsnerValtr2011,FelsnerScheucher2019}.
However,
determining better/precise bounds on the quadratic term in the exponent remains
a long-standing problem; see also the sequences \href{https://oeis.org/A006245}{A6245} and \href{https://oeis.org/A296406}{A296406} 
in the OEIS~\cite{OEIS}.

Another long-standing question is whether a natural triangle flip Markov chain on arrangements of pseudolines is rapidly mixing. The Markov chain $(X_t)$ is the lazy random walk in $\mathbf{F}_n$. Starting with an arbitrary arrangement $X_0$, 
the chain moves from $X_t$ to $X_{t+1}$ by selecting a triangle of $X_t$  uniformly at random and then flip it with probability $\frac{1}{2}$. (See \cite{levinPeresWilmer17} for an introduction to Markov chains and mixing~times).

\begin{question}
What can be said about the mixing time of the triangle flip Markov chain?
\end{question}

If the triangle flip Markov chain is rapidly mixing, then this could 
yield  a fully polynomial randomized approximation scheme (FPRAS) for counting simple arrangements of pseudolines. 
As the mixing time depends on the expansion of the underlying flip graphs of the Markov chain, our \Cref{thm:connectivity} may be a first step towards a proof that the  triangle flip Markov chain on arrangements is rapidly mixing.
A better understanding of short 
paths in $\mathbf{F}_n$ could further help in constructing a family of canonical paths, a common technique for bounding the mixing time of a Markov chain (see, for example, \cite[chapter~5]{jerrum03}). This motivates the following question which is of interest also from the reconfiguration point of view:

\begin{question}
What is the computational complexity of computing a shortest path between any pair of vertices in $\mathbf{F}_n$?
\end{question}

In the context of pseudocircle arrangements, we showed that the triangle flip graph of intersecting arrangements of~$n$ pseudocircles is connected for every $n \in \mathbb{N}$. 

The minimum degree in the triangle flip graph of intersecting arrangements of pseudocircles is known to be between $\frac{2n}{3}$ and $n-1$, see \cite{FelsnerScheucher2020}. Moreover, 
the minimum degree of the triangle flip graph of digon-free intersecting arrangements equals~$\lceil \frac{4n}{3} \rceil$, see \cite{FelsnerRS2022}. 
These results yield bounds for the connectivity of the respective flip graphs.

For \emph{digon-free} arrangements the analog of 
\Cref{question:pwi} was stated as a conjecture in 
\cite{FelsnerScheucher2019}:

\begin{question}
Is it true that, for every $n \in \mathbb{N}$, the triangle flip graph of digon-free intersecting arrangements of $n$ pseudocircles is connected? 
\end{question}

Felsner and Scheucher \cite{FelsnerScheucher2019} showed that the corresponding result for digon-free intersecting arrangements of circles is true. 
The idea is as follows: a given arrangement of circles is projected to a unit-sphere $\mathbf{S}$
and the circles are extended to planes (each circle is the intersection of a plane with~$\mathbf{S}$).
The planes are simultaneously moved towards the origin such that all circles eventually become great-circles.
The connectivity then follows from the flip connectivity of great-circle arrangements. 
When moving the planes towards the origin, it is crucial that each Krupp remains a Krupp and each NonKrupp is eventually transformed into a Krupp;  therefore no new digons~appear.

\begin{figure}[htb]
\centering
\includegraphics[page=1,scale=0.93]{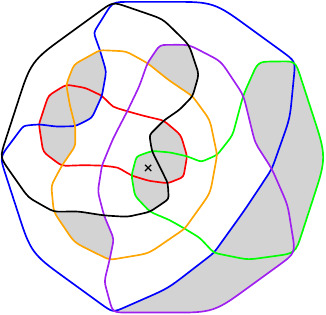}
\caption{The cylindrical arrangement $\AAsixB$. 
Each triangle (shaded gray) is formed by a Krupp. The center is marked by \textbf{$\times$}. 
}
\label{fig:n6_nonr_number2_noncyl}
\end{figure}

We remark that a corresponding approach to transform a digon-free arrangement of pairwise intersecting pseudocircles into an arrangement of great-pseudocircles by iteratively flipping NonKrupp triangles to increase the number of triples which form a Krupp does not seem to work. The 
arrangement~$\AAsixB$ shown in \Cref{fig:n6_nonr_number2_noncyl} is digon-free, contains a NonKrupp subarrangement (e.g., formed by the red, blue and black pseudocircles), but every triple of pseudocircles, which bound a triangle, form a Krupp. 
Note that $\AAsixB$ is 
non-circularizable~\cite[Section 6.2]{FelsnerScheucher2019}. 
By the connectivity result for digon-free intersecting circle arrangements an analogous situation cannot occur in circularizable arrangements.

With our \Cref{thm:main_cipa} we have shown that the triangle flip graph of intersecting cylindrical arrangements of $n$~pseudocircles is connected. The question whether the triangle flip graph of \emph{digon-free} intersecting cylindrical arrangements is connected remains open. 

The progress in the study of triangle flip graphs made in this paper
may foster the future study of questions like Hamiltonicity, degree of connectivity, expansion properties, and others for classes of triangle flip graphs.

Our focus in this paper was on arrangements of pseudolines and pseudocircles. Other authors have also studied other arrangements of 
curves with restricted intersection patterns, see e.g.\ \cite{AK23,Chan08,BHP08}. It seems, however, that flip graphs
have not been studied for families of curves beyond pseudolines and pseudocircles. Indeed one of the main tools in our studies,
the sweeping lemma is not available in general.
Snoeyink and Hershberger \cite{SnoeyinkHershberger1991} describe arrangements of curves with at most 3 and 4 pairwise intersections, respectively, which cannot be swept.

{
\small
\advance\bibitemsep-0pt
\let\sc=\scshape
\bibliographystyle{my-siam}
\bibliography{bibliography}

\begin{thebibliography}{10}

\bibitem{AgarwalNPPSS2004}
{\sc P.~K. Agarwal, E.~Nevo, J.~Pach, R.~Pinchasi, M.~Sharir, and
  S.~Smorodinsky}, {\em Lenses in arrangements of pseudo-circles and their
  applications}, Journal of the ACM, 51 (2004), pp.~139--186.

\bibitem{AGARWAL200049}
{\sc P.~K. Agarwal and M.~Sharir}, {\em Arrangements and their applications},
  in Handbook of Computational Geometry, North-Holland, 2000, pp.~49--119.

\bibitem{AichholzerCH0KM23}
{\sc O.~Aichholzer, M.~Chiu, H.~P. Hoang, M.~Hoffmann, J.~Kyncl, Y.~Maus,
  B.~Vogtenhuber, and A.~Weinberger}, {\em Drawings of complete multipartite
  graphs up to triangle flips}, in Symposium on Computational Geometry,
  vol.~258 of LIPIcs, 2023, pp.~6:1--6:16.

\bibitem{AlvesRadtke2023}
{\sc Y.~{Alves Radtke}}, {\em On the connectivity of the flip-graph of
  pseudoline arrangements}, Bachelor's thesis, Technische Universität Berlin,
  2023.

\bibitem{gioan_proof}
{\sc A.~Arroyo, D.~McQuillan, R.~B. Richter, and G.~Salazar}, {\em Drawings of
  {$K_n$} with the same rotation scheme are the same up to triangle-flips
  ({G}ioan’s theorem)}, The Australasian Journal of Combinatorics, 67 (2017),
  pp.~131--144.

\bibitem{balinki61}
{\sc M.~L. Balinski}, {\em On the graph structure of convex polyhedra in
  $n$-space}, Pacific Journal of Mathematics, 11 (1961), pp.~431--434.

\bibitem{BjoenerLVWSZ1993}
{\sc A.~Bj{\"o}rner, M.~{Las Vergnas}, B.~Sturmfels, N.~White, and G.~M.
  Ziegler}, {\em {Oriented Matroids}}, vol.~46 of Encyclopedia of Mathematics
  and its Applications, Cambridge University Press, 2~ed., 1999.

\bibitem{Blanc2011}
{\sc J.~Blanc}, {\em The best polynomial bounds for the number of triangles in
  a simple arrangement of $n$ pseudo-lines}, Geombinatorics, 21 (2011),
  pp.~5--17.

\bibitem{BokowskiKPZ18}
{\sc J.~Bokowski, J.~Kovic, T.~Pisanski, and A.~Zitnik}, {\em Combinatorial
  configurations, quasiline arrangements, and systems of curves on surfaces},
  Ars Mathematica Contemporanea, 14 (2018), pp.~97--116.

\bibitem{BultenaGruenbaumRuskey1999}
{\sc B.~Bultena, B.~Grünbaum, and F.~Ruskey}, {\em Convex drawings of
  intersecting families of simple closed curves}, in Canadian Conference on
  Computational Geometry, 1999, pp.~18--21.

\bibitem{BHP08}
{\sc S.~Buzaglo, R.~Holzman, and R.~Pinchasi}, {\em On \emph{s}-intersecting
  curves and related problems}, in Proc. SoCG 2008, {ACM}, 2008, pp.~79--84.

\bibitem{Chan08}
{\sc T.~M. Chan}, {\em On levels in arrangements of curves, iii: further
  improvements}, in Proc. SoCG 2008, {ACM}, 2008, pp.~85--93.

\bibitem{celmsstt-tcslm-18}
{\sc H.~Chang, J.~Erickson, D.~Letscher, A.~de~Mesmay, S.~Schleimer,
  E.~Sedgwick, D.~Thurston, and S.~Tillmann}, {\em Tightening curves on
  surfaces via local moves}, in Symposium on Discrete Algorithms, 2018,
  pp.~121--135.

\bibitem{LRSTriangulationsBook2010}
{\sc J.~De~Loera, J.~Rambau, and F.~Santos}, {\em {Triangulations: Structures
  for Algorithms and Applications}}, vol.~25 of Algorithms and Computation in
  Mathematics, Springer, 2010.

\bibitem{dumitrescuMandal2020}
{\sc A.~Dumitrescu and R.~Mandal}, {\em New lower bounds for the number of
  pseudoline arrangements}, Journal of Computational Geometry, 11 (2020),
  pp.~60--92.

\bibitem{ECKHOFF1993}
{\sc J.~Eckhoff}, {\em {Helly, Radon, and Carathéodory Type Theorems}}, in
  Handbook of Convex Geometry, North-Holland, 1993, pp.~389--448.

\bibitem{EdelBook01}
{\sc H.~Edelsbrunner}, {\em Geometry and Topology for Mesh Generation}, vol.~7
  of Cambridge monographs on applied and computational mathematics, Cambridge
  University Press, 2001.

\bibitem{AK23}
{\sc {Eyal Ackerman and Bal{\'{a}}zs Keszegh}}, {\em On the number of
  tangencies among 1-intersecting curves}, 2023.
\newblock \href{10.48550/arXiv.2305.13807}{10.48550/arXiv.2305.13807}.

\bibitem{FelsnerGoodman2018}
{\sc S.~Felsner and J.~E. Goodman}, {\em {Pseudoline Arrangements}}, in
  Handbook of Discrete and Computational Geometry (3rd ed.), CRC Press, 2018.

\bibitem{FeKri99}
{\sc S.~Felsner and K.~Kriegel}, {\em Triangles in {E}uclidean arrangements},
  Discrete {\&} Computational Geometry, 22 (1999), pp.~429--438.

\bibitem{fps-aapl-20}
{\sc S.~Felsner, A.~Pilz, and P.~Schnider}, {\em Arrangements of approaching
  pseudo-lines}, Discrete {\&} Computational Geometry, 67 (2022), pp.~380--402.

\bibitem{FelsnerRS2022}
{\sc S.~Felsner, S.~Roch, and M.~Scheucher}, {\em Arrangements of
  pseudocircles: On digons and triangles}, in Proceedings Graph Drawing and
  Network Visualization, vol.~3764 of LNCS, Springer, 2022, pp.~441--455.

\bibitem{FelsnerScheucher2019}
{\sc S.~Felsner and M.~Scheucher}, {\em {Arrangements of Pseudocircles: On
  Circularizability}}, Discrete {\&} Computational Geometry, Ricky Pollack
  Memorial Issue, 64 (2020), pp.~776--813.

\bibitem{FelsnerScheucher2020}
{\sc S.~Felsner and M.~Scheucher}, {\em {Arrangements of Pseudocircles:
  Triangles and Drawings}}, Discrete {\&} Computational Geometry, 65 (2021),
  pp.~261--278.

\bibitem{felsnerValtr2011}
{\sc S.~Felsner and P.~Valtr}, {\em Coding and counting arrangements of
  pseudolines}, Discrete \& Computational Geometry, 46 (2011), pp.~405--416.

\bibitem{FelsnerWeil2001}
{\sc S.~Felsner and H.~Weil}, {\em Sweeps, arrangements and signotopes},
  Discrete Applied Mathematics, 109 (2001), pp.~67--94.

\bibitem{FeZi01}
{\sc S.~Felsner and G.~M. Ziegler}, {\em Zonotopes associated with higher
  {B}ruhat orders}, Discrete Mathematics, 241 (2001), pp.~301--312.

\bibitem{GoodmanPollack90}
{\sc J.~E. Goodman and R.~Pollack}, {\em Proof of {G}r\"unbaum’s conjecture
  on the stretchability of certain arrangements of pseudolines}, Journal
  Combinatorial Theory, Series A, 29 (1980), pp.~285--390.

\bibitem{Gruenbaum1972}
{\sc B.~Gr{\"u}nbaum}, {\em {Arrangements and Spreads}}, vol.~10 of CBMS
  Regional Conference Series in Mathematics, AMS, 1972.

\bibitem{GuiterresMamedeSantos}
{\sc G.~Gutierres, R.~Mamede, and J.~L. Santos}, {\em Commutation classes of
  the reduced words for the longest element of {{\(\mathfrak{S}_{n}\)}}}, The
  Electronic Journal of Combinatorics, 27 (2020), pp.~\#P2.21, 25 pages.

\bibitem{jerrum03}
{\sc M.~Jerrum}, {\em Counting, sampling and integrating: algorithms and
  complexity}, Birkh{\"a}user, 2003.

\bibitem{Knuth4A}
{\sc D.~E. Knuth}, {\em {The Art of Computer Programming, Volume {4A:}
  Combinatorial Algorithms, Part 1}}, Addison-Wesley, 2011.

\bibitem{l-pubrm-15}
{\sc M.~Lackenby}, {\em A polynomial upper bound on {Reidemeister} moves},
  Annals of Mathematics, 82 (2015), pp.~491--564.

\bibitem{Levi1926}
{\sc F.~Levi}, {\em Die {Teilung} der projektiven {Ebene} durch {Gerade} oder
  {Pseudogerade}}, Berichte über die Verhandlungen der Sächsischen Akademie
  der Wissenschaften zu Leipzig, Mathematisch-Physische Klasse, 78 (1926),
  pp.~256--267.

\bibitem{levinPeresWilmer17}
{\sc D.~A. Levin, Y.~Peres, and E.~L. Wilmer}, {\em Markov chains and mixing
  times}, AMS, 2nd~ed., 2017.

\bibitem{MaSch89}
{\sc Y.~I. Manin and V.~V. Schechtman}, {\em Arrangements of hyperplanes,
  higher braid groups and higher {B}ruhat orders}, in Algebraic Number Theory
  -- in honour of K.~Iwasawa, vol.~17 of Advanced Studies in Pure Mathematics,
  Kinokuniya Company/Academic Press, 1989, pp.~289--308.

\bibitem{MannevillePilaud2015}
{\sc T.~Manneville and V.~Pilaud}, {\em Graph properties of graph
  associahedra}, Séminaire Lotharingien de Combinatoire, B73d, 31~pages
  (2015).

\bibitem{Mnev1988}
{\sc N.~E. Mn{\"e}v}, {\em The universality theorems on the classification
  problem of configuration varieties and convex polytopes varieties}, in
  Topology and Geometry -- Rohlin Seminar, vol.~1346 of LMN, Springer, 1988,
  pp.~527--543.

\bibitem{Muetze2022survey}
{\sc T.~M{\"{u}}tze}, {\em {Combinatorial Gray codes-an updated survey}}, 2022.
\newblock \href{http://arXiv.org/abs/2202.01280}{arXiv:2202.01280}.

\bibitem{Nishimura2018survey}
{\sc N.~Nishimura}, {\em Introduction to reconfiguration}, Algorithms, 11
  (2018), p.~52.

\bibitem{OEIS}
{\sc {{OEIS Foundation Inc.}}}, {\em {The On-Line Encyclopedia of Integer
  Sequences}}.
\newblock Published electronically at \url{http://oeis.org}.

\bibitem{PilaudSantos2019}
{\sc V.~Pilaud and F.~Santos}, {\em Quotientopes}, Bulletin of the London
  Mathematical Society, 51 (2019), pp.~406--420.

\bibitem{Ringel1956}
{\sc G.~Ringel}, {\em {Teilungen der Ebene durch Geraden oder topologische
  Geraden}}, Mathematische Zeitschrift, 64 (1956), pp.~79--102.

\bibitem{schaefer21}
{\sc M.~Schaefer}, {\em Taking a detour; or, {Gioan’s} theorem, and
  pseudolinear drawings of complete graphs}, Discrete \& Computational
  Geometry, 66 (2021), pp.~12--31.

\bibitem{Shannon1979}
{\sc R.~W. Shannon}, {\em Simplicial cells in arrangements of hyperplanes},
  Geometriae Dedicata, 8 (1979), pp.~179--187.

\bibitem{SnoeyinkHershberger1991}
{\sc J.~{Snoeyink} and J.~{Hershberger}}, {\em Sweeping arrangements of
  curves}, in Discrete {\&} Computational Geometry: Papers from the DIMACS
  Special Year, AMS, 1991, pp.~309--349.

\bibitem{Suvorov1988}
{\sc P.~Suvorov}, {\em Isotopic but not rigidly isotopic plane systems of
  straight lines}, in Topology and Geometry -- Rohlin Seminar, vol.~1346 of
  LMN, Springer, 1988, pp.~545--556.

\bibitem{t-rmck-83}
{\sc B.~Trace}, {\em On the {Reidemeister} moves of a classical knot},
  Proceedings of the American Mathematical Society, 89 (1983), pp.~722--724.

\bibitem{Zi93}
{\sc G.~M. Ziegler}, {\em Higher {B}ruhat orders and cyclic hyperplane
  arrangements}, Topology, 32 (1993), pp.~259--279.

\end{thebibliography}
}

\end{document}